\theoremstyle{plain} 
\newtheorem{theorem}{\indent\bf Theorem}[section]
\theoremstyle{definition} 
\newtheorem{problem}[theorem]{\indent\bf Problem}
\newtheorem{thm}{Theorem}[section]
\newtheorem{lem}[thm]{Lemma}
\newtheorem{prop}[thm]{Proposition}
\theoremstyle{definition}
\newtheorem{defn}{Definition}[section]
\theoremstyle{remark}
\newtheorem{rem}{Remark}[section]
\newcommand{\be}{\begin{equation}}
\newcommand{\ee}{\end{equation}}
\newcommand{\bea}{\begin{eqnarray}}
\newcommand{\eea}{\end{eqnarray}}
\newcommand{\ben}{\begin{eqnarray*}}
	\newcommand{\een}{\end{eqnarray*}}
\newcommand{\bt}{\begin{split}}
	\newcommand{\et}{\end{split}}
\newcommand{\bet}{\begin{equation}}
\newcommand{\mc}{\mathbb{C}}
\newcommand{\ra}{\rightarrow}
\newcommand{\dbar}{\bar{\partial}}
\begin{document}
\baselineskip 19pt
\baselineskip 16pt

\title{Positivity of holomorphic vector bundles in terms of  $L^p$-conditions of $\bar\partial$}

\author[F. Deng]{Fusheng Deng}
\address{Fusheng Deng: \ School of Mathematical Sciences, University of Chinese Academy of Sciences\\ Beijing 100049, P. R. China}
\email{fshdeng@ucas.ac.cn}
\author[J. Ning]{Jiafu Ning\textsuperscript{*}}
\address{Jiafu Ning: \ Department of Mathematics, Central South University, Changsha, Hunan 410083, P. R. China.}
\email{jfning@csu.edu.cn}
\author[Z. Wang]{Zhiwei Wang\textsuperscript{*}}
\address{ Zhiwei Wang: \ School
	of Mathematical Sciences\\Beijing Normal University\\Beijing\\ 100875\\ P. R. China}
\email{zhiwei@bnu.edu.cn}
\author[X. Zhou]{Xiangyu Zhou}
\address{Xiangyu Zhou: Institute of Mathematics\\Academy of Mathematics and Systems Sciences\\and Hua Loo-Keng Key
	Laboratory of Mathematics\\Chinese Academy of
	Sciences\\Beijing\\100190\\P. R. China}
\address{School of
	Mathematical Sciences, University of Chinese Academy of Sciences,
	Beijing 100049, China}
\email{xyzhou@math.ac.cn}

\begin{abstract}
We study the positivity properties of Hermitian (or even Finsler) holomorphic vector bundles
in terms of $L^p$-estimates of $\bar\partial$ and $L^p$-extensions of holomorphic objects.
To this end,  we introduce four conditions, called the optimal $L^p$-estimate condition,
the multiple coarse $L^p$-estimate condition, the optimal $L^p$-extension condition,
and the multiple coarse $L^p$-extension condition, for a  Hermitian (or Finsler) vector bundle $(E,h)$.
The main result of the present paper is to give a characterization of the Nakano positivity of $(E,h)$
via the optimal $L^2$-estimate condition.
We also show that  $(E,h)$ is Griffiths positive if it satisfies the multiple coarse $L^p$-estimate condition for some $p>1$,
the optimal $L^p$-extension condition, or the multiple coarse $L^p$-extension condition for some $p>0$.
These results can be roughly viewed as converses of H\"{o}rmander's $L^2$-estimate of $\bar\partial$
 and Ohsawa-Takegoshi type extension theorems.
As an application of the main result, we get a totally different method to Nakano positivity of direct image sheaves of
twisted relative canonical bundles associated to holomorphic families of complex manifolds.

\end{abstract}

\thanks{(*) The second author and the third author are both corresponding authors.}

\maketitle


\tableofcontents

\section{Introduction}


 The present paper is to study positivity properties of Hermitian (or even Finsler) holomorphic vector bundles
 via $L^p$-estimates of $\bar\partial$ and $L^p$-extensions of holomorphic objects,
 which can be roughly viewed as converses of H\"{o}rmander's $L^2$-estimate of $\bar\partial$
 and Ohsawa-Takegoshi type extension theorems.
 This is a continuation of the previous work \cite{DNW1} on characterizations of plurisubharmonic
 functions.

To state the main results, we first introduce some notions.

%
%
\begin{defn}\label{def:Lp estimate}
Let $(X,\omega)$ be a K\"{a}hler manifold of dimension $n$, which admits a positive Hermitian holomorphic line bundle,
$(E,h)$ be a (singular) Hermitian vector bundle (maybe of infinite rank) over $X$, and $p>0$.
\begin{itemize}
\item[(1)]
$(E,h)$ satisfies \emph{the optimal $L^p$-estimate condition}
if for any positive Hermitian holomorphic line bundle $(A,h_A)$ on $X$,
for any $f\in\mathcal{C}^\infty_c(X,\wedge^{n,1}T^*_X\otimes E\otimes A)$ with $\bar\partial f=0$,
there is $u\in L^p(X,\wedge^{n,0}T_X^*\otimes E\otimes A)$, satisfying $\bar\partial u=f$ and
$$\int_X|u|^p_{h\otimes h_A}dV_\omega\leq \int_X\langle B_{A,h_A}^{-1}f,f\rangle^{\frac{p}{2}} dV_\omega,$$
provided that the right hand side is finite,
where $B_{A,h_A}=[i\Theta_{A,h_A}\otimes Id_E,\Lambda_\omega]$.

\item[(2)]
$(E,h)$ satisfies \emph{the multiple coarse $L^p$-estimate condition}
if for any $m\geq 1$, for any positive Hermitian holomorphic line bundle $(A,h_A)$ on $X$,
and for any $f\in\mathcal{C}^\infty_c(X,\wedge^{n,1}T^*_X\otimes E^{\otimes m}\otimes A)$ with $\bar\partial f=0$,
there is $u\in L^p(X,\wedge^{n,0}T_X^*\otimes E^{\otimes m}\otimes A)$, satisfying $\bar\partial u=f$ and
$$\int_X|u|^p_{h^{\otimes m}\otimes h_A}dV_\omega\leq C_m\int_X\langle B_{A,h_A}^{-1}f,f\rangle^{\frac{p}{2}} dV_\omega,$$
provided that the right hand side is finite,
where $C_m$ are constants satisfying the
growth condition $\frac{1}{m}\log C_m\ra 0$ as $m\ra\infty$.
\end{itemize}
\end{defn}

\begin{defn}\label{def:Lp extension}
Let $(E,h)$ be a Hermitian  holomorphic vector bundle (maybe of infinite rank) over a domain $D\subset\mc^n$ with a singular Finsler metric $h$,
and $p>0$.
\begin{itemize}
\item[(1)]
$(E,h)$ satisfies \emph{the optimal $L^p$-extension condition}
if for any $z\in D$, and $a\in E_{z}$ with $|a|=1$,
and any holomorphic cylinder $P$ with $z+P\subset D$,
there is $f\in H^0(z+P, E)$ such that $f(z)=a$ and
$$\frac{1}{\mu(P)}\int_{z+P}|f|^p\leq 1,$$
where $\mu(P)$ is the volume of $P$ with respect to the Lebesgue measure.
(Here by a holomorphic cylinder we mean a domain of the form $A(P_{r,s})$
for some $A\in U(n)$ and $r,s>0$,
with $P_{r,s}=\{(z_1,z_2,\cdots,z_n):|z_1|^2<r^2,|z_2|^2+\cdots+|z_n|^2<s^2\}$).

\item[(2)]
$(E,h)$ satisfies \emph{the multiple coarse $L^p$-extension condition} if
for any $z\in D$, and $a\in E_{z}$ with $|a|=1$, and any $m\geq 1$,
there is $f_m\in H^0(D, E^{\otimes m})$ such that $f_m(z)=a^{\otimes m}$ and satisfies the following estimate:
$$\int_D|f_m|^{p}\leq C_m,$$
where $C_m$ are constants independent of $z$ and satisfying the
growth condition $\frac{1}{m}\log C_m\ra 0$ as $m\ra\infty$.
\end{itemize}
\end{defn}

(See \S \ref{subsec:finsler metric} for the definition of singular Finsler metrics.)

\begin{rem}
Similarly, one can define the optimal (resp. multiple coarse) $L^p$-extension condition
for a Hermitian vector bundle $(E,h)$ over a K\"{a}hler manifold $X$.
But it is clear that if $(E,h)$ satisfies the optimal (resp. multiple coarse) $L^p$-extension condition on $X$,
then it admits the same condition when restricted on any open set $D$ of $X$.
So we just focus on bounded domains in Definition \ref{def:Lp extension}.
However, it is not the case for the optimal (resp. multiple coarse) $L^p$-estimate condition
since a positive Hemitian line bundle over an open domain in $X$ may not extend to $X$.
\end{rem}



The conditions defined in Definition \ref{def:Lp estimate}, \ref{def:Lp extension}
for trivial line bundles were studied in \cite{DNW1}.
The multiple coarse $L^p$-extension condition for vector bundles with singular Finsler metrics was introduced in \cite{DWZZ1},
and the multiple coarse $L^2$-estimate condition for Hermitian vector bundles was introduced in \cite{HI},
which was named as the twisted H\" ormander condition there.
A property (called "minimal extension property") that is related to the optimal $L^2$-extension condition was introduced in \cite{HPS16}.

The first and the main result of this paper is the following characterization of Nakano positivity in terms of optimal $L^2$-estimate condition.

\begin{thm}\label{thm:theta-nakano text_intr}
Let $(X,\omega)$ be a  K\"{a}hler manifold of dimension $n$ with a K\" ahler metric $\omega$, which admits a positive Hermitian holomorphic  line bundle,
$(E,h)$ be a smooth Hermitian vector bundle over $X$,
and $\theta\in C^0(X,\Lambda^{1,1}T^*_X\otimes End(E))$ such that $\theta^*=\theta$.
If for any $f\in\mathcal{C}^\infty_c(X,\wedge^{n,1}T^*_X\otimes E\otimes A)$ with $\bar\partial f=0$,
and any positive Hermitian line bundle $(A,h_A)$ on $X$ with $i\Theta_{A,h_A}\otimes Id_E+\theta>0$ on $\text{supp}f$,
there is $u\in L^2(X,\wedge^{n,0}T_X^*\otimes E\otimes A)$, satisfying $\bar\partial u=f$ and
$$\int_X|u|^2_{h\otimes h_A}dV_\omega\leq \int_X\langle B_{h_A,\theta}^{-1}f,f\rangle_{h\otimes h_A} dV_\omega,$$
provided that the right hand side is finite,
where $B_{h_A,\theta}=[i\Theta_{A,h_A}\otimes Id_E+\theta,\Lambda_\omega]$,
then $i\Theta_{E,h}\geq\theta$ in the sense of Nakano.
On the other hand, if in addition $X$ is assumed to have a complete K\"ahler metric,
the above condition is also necessary for that $i\Theta_{E,h}\geq\theta$ in the sense of Nakano.
In particular, if $(E,h)$ satisfies the optimal $L^2$-estimate condition, then $(E,h)$ is Nakano semi-positive.
\end{thm}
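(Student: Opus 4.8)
This is a proof plan, not a complete argument. The necessity part (under the extra hypothesis that $X$ carries a complete K\"ahler metric) is the H\"ormander--Demailly $L^2$-estimate with optimal constant, in the vector bundle setting: for a smooth compactly supported $\bar\partial$-closed $f$ and every $(n,1)$-form $v\in\mathrm{Dom}\,\bar\partial^*$, the Bochner--Kodaira--Nakano identity on $(E\otimes A,\,h\otimes h_A)$ gives $\|\bar\partial^* v\|^2\ge\int_X\langle[i\Theta_{E\otimes A,\,h\otimes h_A},\Lambda_\omega]v,v\rangle\,dV_\omega$, and the assumption $i\Theta_{E,h}\ge\theta$, together with $i\Theta_{A,h_A}\otimes\mathrm{Id}_E+\theta>0$ on $\mathrm{supp}\,f$, yields $[i\Theta_{E\otimes A,\,h\otimes h_A},\Lambda_\omega]\ge B_{h_A,\theta}>0$ there; the usual Hahn--Banach/Cauchy--Schwarz solvability lemma then produces $u$ with $\bar\partial u=f$ and $\int_X|u|^2\le\int_X\langle B_{h_A,\theta}^{-1}f,f\rangle$. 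Completeness enters only through the density of compactly supported forms and, when the given complete metric is not $\omega$, through Demailly's device of working with $\omega+\varepsilon\widetilde\omega$ and letting $\varepsilon\to 0$. The rest of the plan concerns the sufficiency part; the last sentence is the case $\theta\equiv 0$.

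The condition ``$i\Theta_{E,h}\ge\theta$ in the Nakano sense'' is local and point-wise, so I fix $x_0\in X$ and aim to prove it there. Choose holomorphic coordinates centred at $x_0$ and a holomorphic frame of $E$ geodesic at $x_0$, so that $\omega$ is Euclidean to second order and $h=\mathrm{Id}+O(|z|^2)$ with its quadratic part recording $i\Theta_{E,h}(x_0)$, and regard $\theta$ through $\theta(x_0)$. Using that $X$ carries a positive line bundle, fix a positive $(A,h_A)$ whose weight near $x_0$ can be taken to be an exact constant quadratic $\sum\beta_{j\bar k}z_j\bar z_k$ (glued to a high power of a fixed positive bundle away from $x_0$), with $\beta$ so large that $[\beta\otimes\mathrm{Id}_E+\theta(x_0),\Lambda_\omega]$ and $[\beta\otimes\mathrm{Id}_E+i\Theta_{E,h}(x_0),\Lambda_\omega]$ are both positive operators on $(\wedge^{n,1}T_X^*\otimes E\otimes A)_{x_0}$ and $i\Theta_{A,h_A}\otimes\mathrm{Id}_E+\theta>0$ near $x_0$. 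Assume for contradiction that $i\Theta_{E,h}(x_0)\not\ge\theta(x_0)$; since these two operators differ exactly by $[i\Theta_{E,h}(x_0)-\theta(x_0),\Lambda_\omega]$, there is a direction $v_0$ along which the operator built from $i\Theta_{E,h}$ has strictly smaller expectation than $B_{h_A,\theta}(x_0)$.

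The contradiction is extracted from the duality underlying H\"ormander's lemma: the hypothesis gives, for every admissible $\bar\partial$-closed $f\in\mathcal C^\infty_c$, a solution $u$ with $\int_X|u|^2\le\int_X\langle B_{h_A,\theta}^{-1}f,f\rangle$, whence the minimal solution $u_0$ satisfies $|\langle f,v\rangle|^2\le\|\bar\partial^* v\|^2\int_X\langle B_{h_A,\theta}^{-1}f,f\rangle$ for \emph{all} $v\in\mathrm{Dom}\,\bar\partial^*$. I would then scale the metric of $A$ by $\lambda\to\infty$ (i.e.\ work with high tensor powers), which concentrates the effective mass of any such problem into a ball of radius $\sim\lambda^{-1/2}$; after rescaling $z=w/\sqrt\lambda$ the local geometry converges to the Bargmann--Fock model with weight $\sum\beta_{j\bar k}w_j\bar w_k$, perturbed at relative order $\lambda^{-1}$ by the quadratic Taylor parts of $h$, $\omega$ and by $\theta(x_0)$. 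One chooses, on this model, an extremal pair $(f^{\mathrm{mod}},v^{\mathrm{mod}})$ — for which H\"ormander's estimate is an equality and the Bochner--Kodaira $\nabla'$-terms vanish — oriented so that $v_0$ is the relevant direction, transplants and rescales them to $f_\lambda,v_\lambda$ on $X$, and expands both sides in $\lambda^{-1}$. The upshot should be $\|u_{0,\lambda}\|^2\ge|\langle f_\lambda,v_\lambda\rangle|^2/\|\bar\partial^* v_\lambda\|^2=\int_X\langle B_{h_A,\theta}^{-1}f_\lambda,f_\lambda\rangle+\lambda^{-1}c+o(\lambda^{-1})$, where the $\lambda^{-1}$-coefficient is $c=-\langle[i\Theta_{E,h}(x_0)-\theta(x_0),\Lambda_\omega]v_0,v_0\rangle>0$ up to a fixed positive factor: increasing the curvature of the bundle makes the minimal-solution functional smaller, so the Nakano \emph{deficiency} along $v_0$ forces $\|u_{0,\lambda}\|^2$ above the allowed bound. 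Thus $\|u_{0,\lambda}\|^2>\int_X\langle B_{h_A,\theta}^{-1}f_\lambda,f_\lambda\rangle$ for large $\lambda$, contradicting the displayed inequality; hence $i\Theta_{E,h}(x_0)\ge\theta(x_0)$.

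The heart of the matter, and the step I expect to be the main obstacle, is to make the preceding paragraph rigorous, i.e.\ to control the $\lambda^{-1}$-asymptotics of $\|u_{0,\lambda}\|^2$ on $X$. This requires: (i) establishing the \emph{sharpness} of the $\bar\partial$-estimate on the Bargmann--Fock model, identifying the extremal pair via its reproducing-kernel structure and the equality case of Bochner--Kodaira--Nakano, and computing how the first-order perturbation by $i\Theta_{E,h}(x_0)$ versus $\theta(x_0)$ changes the minimal $L^2$-norm (a Berndtsson-type variational computation); and (ii) a localization estimate showing that the global minimal solution on $X$ agrees with the model solution up to $o(\lambda^{-1})$ — the solution decays away from $\mathrm{supp}\,f_\lambda$ by the strict positivity of $A$, but one must carefully control the orthogonal projection onto global $L^2$ holomorphic sections of $E\otimes A^{\otimes\lambda}$, whose number and mass grow with $\lambda$, e.g.\ by an off-diagonal Bergman-kernel/sub-mean-value estimate. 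The careful bookkeeping of every $O(\lambda^{-1})$ contribution coming from the Taylor tails of $h$, $h_A$ and $\omega$ — organized through the Bochner--Kodaira--Nakano identity so that they reassemble into the curvature term — is where the real work of the proof lies.
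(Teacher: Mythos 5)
Your necessity part is exactly the paper's (H\"ormander--Demailly via Bochner--Kodaira--Nakano), and your starting point for sufficiency --- the duality inequality $|\langle\langle f,v\rangle\rangle|^2\le \|\bar\partial^*v\|^2\int_X\langle B_{h_A,\theta}^{-1}f,f\rangle$ for all test forms $v$ --- is also the paper's first step. But from there your plan has a genuine gap: everything that would actually produce the contradiction (sharpness of the $\bar\partial$-estimate on the Bargmann--Fock model, identification of an extremal pair, the $\lambda^{-1}$ perturbation expansion of the minimal solution's norm, and the $o(\lambda^{-1})$ localization of the global minimal solution via off-diagonal Bergman-kernel bounds) is left unproved, and you acknowledge this. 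These are not routine details: to see the Nakano deficiency at order $\lambda^{-1}$ you must choose $v_\lambda$ so that the lower bound $\sup_v|\langle f_\lambda,v\rangle|^2/\|\bar\partial^*v\|^2$ matches $\int\langle B^{-1}f_\lambda,f_\lambda\rangle$ up to errors strictly smaller than $\lambda^{-1}$, and controlling the projection onto the growing space of global holomorphic sections of $E\otimes A^{\otimes\lambda}$ is a substantial Bergman-kernel analysis that your sketch does not supply. As written, the proposal does not close.

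The missing idea that makes all of this unnecessary is to test the duality inequality with $\alpha=B_{h_A,\theta}^{-1}f$ itself and apply Bochner--Kodaira--Nakano once: since $\|\bar\partial^*\alpha\|^2\le \|D'\alpha\|^2+\|D'^*\alpha\|^2+\langle\langle[i\Theta_{E,h}+i\partial\bar\partial\psi\otimes Id_E,\Lambda_\omega]\alpha,\alpha\rangle\rangle-\|\bar\partial\alpha\|^2$, the hypothesis collapses to the clean inequality
\begin{equation*}
\langle\langle[i\Theta_{E,h}-\theta,\Lambda_\omega]\alpha,\alpha\rangle\rangle_{\psi}+\|D'^*\alpha\|_{\psi}^2\ \ge\ 0
\end{equation*}
valid for every admissible weight $\psi$ (globalized as a metric on a power of $A$ via the paper's Proposition 2.1). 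One then takes $f$ to be a cut-off constant-coefficient form in a bad Nakano direction $\xi_0$; Demailly's computation gives $D'^*B_{\psi,0}^{-1}f=0$ at the center, so after shrinking the ball the $D'^*$ term is dominated by the strictly negative curvature term, and replacing $\psi$ by $m\psi$ with $\psi=|z|^2-R^2/4$ concentrates both integrals on the inner ball as $m\to\infty$, yielding a negative left-hand side --- the contradiction. This avoids any reference to the minimal solution, extremal pairs, or Bergman-kernel asymptotics: note that the hypothesis only bounds the minimal solution from above, and the paper's choice $\alpha=B^{-1}f$ is precisely what turns the abstract lower bound $\sup_v|\langle f,v\rangle|^2/\|\bar\partial^*v\|^2$ into an exactly comparable quantity, with the Nakano deficiency appearing as the only possible source of failure.
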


We prove Theorem \ref{thm:theta-nakano text_intr} by connecting $\Theta_{E,h}$
with the optimal $L^2$-estimate condition through the Bochner-Kodaira-Nakano identity,
and then using a localization technique to produce a contradiction if
$i\Theta_{E,h}\geq\theta$ is assumed to be not true.



\begin{problem}
 Does Theorem \ref{thm:theta-nakano text_intr} still hold
 if the  optimal $L^2$-estimate condition is replaced by the  optimal $L^p$-estimate condition for some $p\neq 2$?
\end{problem}

\begin{problem}
 Establish results analogous to Theorem \ref{thm:theta-nakano text_intr}
 for holomorphic vector bundles with singular Hermitian metrics.
\end{problem}

\begin{thm}\label{thm:coarse estimate text-intr}
Let $(X,\omega)$ be a K\"{a}hler manifold, which admits a positive Hermitian holomorphic  line bundle, and $(E,h)$ be a   holomorphic vector bundle over $X$ with a continuous Hermitian metric $h$.
If $(E,h)$ satisfies the multiple coarse $L^p$-estimate condition for some $p>1$,  then $(E,h)$ is Griffiths semi-positive.
\end{thm}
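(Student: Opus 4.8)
The plan is to reduce the problem to a statement about plurisubharmonic-type behavior of $\log|s|^*_h$ for local holomorphic sections, and then to exploit the tensor-power ($E^{\otimes m}$) structure together with the subadditivity encoded in the growth condition $\frac1m\log C_m\to 0$. More precisely, Griffiths semi-positivity of a continuous Hermitian metric $h$ is equivalent to the statement that for every local holomorphic section $s$ of the dual bundle $E^*$, the function $\log|s|_{h^*}$ is plurisubharmonic (this is the standard dual-Nakano-type characterization of Griffiths positivity for continuous metrics; see Berndtsson--Păun, Raufi). So I would fix a point $x_0\in X$, a tangent vector, and a local frame, and aim to verify this psh property by producing, for each $m$, good holomorphic sections of $(E^{\otimes m})^*\cong (E^*)^{\otimes m}$ via the $\bar\partial$-solving hypothesis, then passing $m\to\infty$.

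The key steps, in order, are as follows. First I would localize: since the multiple coarse $L^p$-estimate condition is stated on all of $X$ but only involves compactly supported data twisted by an arbitrary positive line bundle $(A,h_A)$, I would work near $x_0$ inside a coordinate ball, choose $A$ with curvature as large as needed on a neighborhood of the support, and use a cut-off construction: given a prescribed jet of a section of $E^{\otimes m}$ at $x_0$ (namely $a^{\otimes m}$ for a unit vector $a$), form a smooth compactly supported almost-holomorphic extension $\tilde s$, set $f=\bar\partial\tilde s$, solve $\bar\partial u = f$ with the $L^p$ bound $\int_X|u|^p_{h^{\otimes m}\otimes h_A}\le C_m\int_X\langle B_{A,h_A}^{-1}f,f\rangle^{p/2}dV_\omega$, and put $s_m=\tilde s-u$, which is holomorphic and still has the right value at $x_0$ (here $u$ vanishes at $x_0$ because the right-hand side can be made to vanish to high order there by the usual trick of adding a weight with a logarithmic pole / shrinking supports, or by a standard interior-estimate bootstrap). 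This is essentially the mechanism already used for Theorem \ref{thm:theta-nakano text_intr} run in reverse. Second, from the uniform-in-$m$ bound on $\int_{B}|s_m|^p_{h^{\otimes m}\otimes h_A}$ I extract a pointwise sub-mean-value inequality: by the sub-mean-value property of $|s_m|^p$ (which is plurisubharmonic since $s_m$ is holomorphic, for any $p>0$, but we need $p>1$ to run Hölder/interpolation cleanly against the fixed weight $h_A$ and to get the gradient control needed below), $|s_m(x_0)|^p_{h^{\otimes m}} = 1$ is controlled against averages of $|s_m|^p_{h^{\otimes m}}$ over shrinking balls, which in turn are bounded by $C_m$ times a quantity that is $O(1)^m$ in the ball radius. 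Third — and this is the crucial analytic input — I would evaluate $|s_m|_{h^{\otimes m}}$ along a complex line through $x_0$ and use the holomorphicity of $s_m$ to compare $\log|s_m|_{h^{\otimes m}}$ at $x_0$ with its average on a small circle; upusing $|s_m(x_0)|_{h^{\otimes m}}=|a|^m_h=1$, taking $\log$, dividing by $m$, and letting $m\to\infty$, the $\frac1m\log C_m\to 0$ hypothesis kills the constants, and what survives is exactly the sub-mean-value inequality for the function $-\log|a|_h$ along that line — i.e. plurisubharmonicity of $\log$ of the dual norm — which is Griffiths semi-positivity.

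The main obstacle I anticipate is the third step: controlling $\frac1m\log|s_m|_{h^{\otimes m}}$ uniformly well enough on a full neighborhood (not just at $x_0$) so that the limit of these quasi-psh functions is genuinely psh and captures the curvature of $h$ rather than of $h^{\otimes m}$ with uncontrolled cross terms. The point is that $|v_1\otimes\cdots\otimes v_m|_{h^{\otimes m}}=\prod|v_i|_h$ only for pure tensors, and a solution $s_m$ produced by $\bar\partial$ need not be (even approximately) a pure tensor away from $x_0$; so one must argue that its $h^{\otimes m}$-norm is nonetheless bounded below, off a small set, by (something like) the $m$-th power of a single smooth metric comparison — this is where the continuity of $h$ and a compactness/normal-families argument enter, together with the observation that $|s_m|_{h^{\otimes m}}\le C|s_m|_{h_0^{\otimes m}}^{}$ for a smooth reference metric $h_0$ with constant $C$ independent of $m$ on a fixed compact set (since $h/h_0$ is a continuous positive endomorphism, bounded above and below, and tensoring multiplies the two-sided bounds to the $m$-th power — but $C^{1/m}\to1$, so this is harmless). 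Once that comparison is in hand, the argument closes: one gets plurisubharmonicity of $\log|s|^*_h$ for all local $s$, hence Griffiths semi-positivity of $(E,h)$. I would also need to check the edge case $\mathrm{rk}\,E=\infty$, but the characterization of Griffiths positivity via psh-ness of dual norms is insensitive to the rank, so no new difficulty arises there.
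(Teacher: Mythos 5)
The first half of your proposal is essentially the paper's argument: the cut-off extension of $a^{\otimes m}$, the $\bar\partial$-solution against a weight with a logarithmic pole at the center (so that $u(x_0)=0$), the weak limit as the regularization parameter $\delta\to 0$, and the choice $\epsilon\sim 1/m$ combined with the uniform continuity of $\log|a|_h$ all appear verbatim in the paper, and together they show that $(E,h)$ satisfies the multiple coarse $L^p$-\emph{extension} condition on relatively compact coordinate charts. (Minor point: the hypothesis $p>1$ is used precisely to extract a weakly convergent subsequence of the bounded family $u_{\epsilon,\delta}$ in $L^p$ as $\delta\to 0$, not for H\"older/gradient control.) The genuine gap is in your third step, the passage from these extensions to Griffiths semi-positivity. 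First, you invoke a sub-mean-value property of $|s_m|^p_{h^{\otimes m}}$; but plurisubharmonicity of norms of holomorphic sections with respect to $h^{\otimes m}$ is essentially Griffiths semi-\emph{negativity} of $h$, which is not available and would be circular (and if you instead mean the norm for a flat reference metric, converting back to $h^{\otimes m}$ costs exactly the $C^m$ discussed below). Second, the statement you aim to obtain in the limit --- the sub-mean-value inequality for $z\mapsto-\log|a|_{h(z)}$ with $a$ a \emph{constant} vector in a fixed trivialization --- is strictly stronger than Griffiths semi-positivity and is false in general, so no argument can deliver it: for the rank-two metric $h=g^{*}g$ with $g(z)=\left(\begin{smallmatrix}1&z\\0&1\end{smallmatrix}\right)$, which is flat (hence Nakano semi-positive and satisfying the multiple coarse $L^2$-estimate condition with $C_m=1$), one has $-\log|e_2|^2_{h}=-\log(1+|z|^2)$, which is not plurisubharmonic. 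Third, your proposed patch via a smooth reference metric $h_0$ is quantitatively wrong: if $C^{-1}h_0\le h\le Ch_0$ on a fixed compact set, the induced comparison for $h^{\otimes m}$ carries the constant $C^{m}$, and $\tfrac1m\log C^{m}=\log C$ does not tend to $0$; it only becomes harmless on balls whose radius shrinks with $m$, which cannot give a psh inequality on a fixed neighborhood.

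What is actually needed at this point --- and what the paper does by quoting \cite[Theorem 1.2]{DWZZ1}, reproved here as Theorem \ref{thm: multiple coarse Lp: Griffiths positivity-intr.} --- is to dualize: fix a holomorphic section $u$ of $E^*$, choose $a$ at the center $z$ with $|a|_h=1$ and $|u(z)|_{h^*}=|\langle u(z),a\rangle|$, pair the extension $f_m$ of $a^{\otimes m}$ against $u^{\otimes m}$, and use $m\log|u|_{h^*}\ge \log|\langle u^{\otimes m},f_m\rangle|-\log|f_m|_{h^{\otimes m}}$. Averaging over a holomorphic cylinder, applying Jensen's inequality to the $-\log|f_m|_{h^{\otimes m}}$ term against the $L^p$ bound, and using that $\log|\langle u^{\otimes m},f_m\rangle|$ is genuinely plurisubharmonic (it is the log-modulus of a holomorphic scalar) before dividing by $m$ and letting $m\to\infty$, one obtains the sub-mean-value inequality for $\log|u|_{h^*}$, and then \cite[Lemma 3.1]{DNW1} gives plurisubharmonicity, i.e.\ Griffiths semi-positivity. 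This dual pairing is exactly the mechanism that circumvents the ``pure tensor'' obstacle you correctly identified but did not resolve; without it (or an equivalent substitute) the proposal does not close.
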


\begin{rem}\label{rem:reduce to trivial bundle}
If $X$ admits a strictly plurisubharmonic function,
it is obviously from the proof that, in Theorem \ref{thm:theta-nakano text_intr}
and Theorem \ref{thm:coarse estimate text-intr},
we can take $A$ to be the trivial bundle (with nontrivial metrics).
\end{rem}

The case that $p=2$ and $h$ is H\"older continuous for Theorem \ref{thm:coarse estimate text-intr} was proved
in \cite{HI}, by showing that the multiple coarse $L^2$-estimate condition implies the multiple coarse $L^2$-extension condition
and then applying  \cite[Theorem 1.2]{DWZZ1}.
The case that $E$ is a trivial line bundle was proved in \cite{DNW1}.
Theorem \ref{thm:coarse estimate text-intr} is proved by modifying the technique in \cite{HI, DNW1}.

\begin{thm}\label{thm: optimal Lp extension : Griffiths positive-intr}
Let $E$ be a holomorphic vector bundle over a domain $D\subset\mc^n$,
and $h$ be a singular Finsler metric on $E$,
 such that $|s|_{h^*}$ is upper semi-continuous
for any local holomorphic section $s$ of $E^*$.
If $(E,h)$ satisfies the optimal $L^p$-extension condition for some $p>0$,
then $(E,h)$ is Griffiths semi-positive.
\end{thm}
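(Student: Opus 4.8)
The plan is to reduce the statement to the submean‑value characterization of plurisubharmonicity. Recall (cf.\ \S\ref{subsec:finsler metric}) that $(E,h)$ is Griffiths semi-positive exactly when the dual singular Finsler metric $h^{*}$ on $E^{*}$ is Griffiths semi-negative, that is, when $\log|s|_{h^{*}}$ is plurisubharmonic for every local holomorphic section $s$ of $E^{*}$. So I fix an open set $U\subset D$ and $s\in H^{0}(U,E^{*})$, put $\psi:=\log|s|_{h^{*}}$ on $U$, and aim to prove that $\psi$ is plurisubharmonic. By hypothesis $|s|_{h^{*}}$ is upper semi-continuous, hence so is $\psi$ (with values in $[-\infty,+\infty)$), and it is locally bounded above; thus it suffices to establish, for every $z_{0}\in U$ and every unit vector $w$, the solid submean-value inequality
\[
\psi(z_{0})\le\frac{1}{\pi r^{2}}\int_{|\lambda|<r}\psi(z_{0}+\lambda w)\,dV(\lambda)
\]
for all small $r>0$ (here $dV$ is Lebesgue measure on $\mathbb{C}$). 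I will obtain this by first proving the analogous solid submean-value inequality over small holomorphic cylinders and then letting thin cylinders degenerate to a disc.

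\emph{Core estimate.} Fix $z_{0}$ and a holomorphic cylinder $z_{0}+P$ with $\overline{z_{0}+P}\subset U$, and a slightly larger cylinder $P'$ of the same shape and axis with $z_{0}+P'\subset U$. For $a\in E_{z_{0}}$ with $|a|_{h}=1$, Definition \ref{def:Lp extension}(1) applied to $P'$ gives $f\in H^{0}(z_{0}+P',E)$ with $f(z_{0})=a$ and $\frac{1}{\mu(P')}\int_{z_{0}+P'}|f|_{h}^{p}\,dV\le 1$. Then $g:=\langle s,f\rangle$ is holomorphic on $z_{0}+P'$, $g(z_{0})=\langle s(z_{0}),a\rangle$, and pointwise $|g(z)|\le|s|_{h^{*}}(z)\,|f(z)|_{h}$, i.e.\ $\log|g|\le\psi+\log|f|_{h}$. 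Since $\log|g|$ is plurisubharmonic on a neighbourhood of $\overline{z_{0}+P}$ and a cylinder is a product of balls (so the mean value property iterates coordinatewise), the solid submean-value inequality over $z_{0}+P$ gives
\[
\log|g(z_{0})|\le\frac{1}{\mu(P)}\int_{z_{0}+P}\log|g|\,dV\le\frac{1}{\mu(P)}\int_{z_{0}+P}\psi\,dV+\frac{1}{\mu(P)}\int_{z_{0}+P}\log|f|_{h}\,dV,
\]
and, by concavity of $\log$, $\frac{1}{\mu(P)}\int_{z_{0}+P}\log|f|_{h}\,dV\le\frac{1}{p}\log\!\bigl(\tfrac{1}{\mu(P)}\int_{z_{0}+P}|f|_{h}^{p}\,dV\bigr)\le\frac{1}{p}\log\tfrac{\mu(P')}{\mu(P)}$. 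Taking $a=a_{k}$ with $|a_{k}|_{h}=1$ and $|\langle s(z_{0}),a_{k}\rangle|\to|s|_{h^{*}}(z_{0})$ (the case $|s|_{h^{*}}(z_{0})=0$ being trivial), letting $k\to\infty$, and then shrinking $P'$ to $P$ so that $\mu(P')/\mu(P)\to 1$, I get $\psi(z_{0})\le\frac{1}{\mu(P)}\int_{z_{0}+P}\psi\,dV$ for every holomorphic cylinder centred at $z_{0}$ with closure in $U$.

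\emph{Degeneration and conclusion.} Applying the above to cylinders $A(P_{r,\varepsilon})$ whose axis points in the given direction $w$ (allowed since $A\in U(n)$ is arbitrary in Definition \ref{def:Lp extension}), I write the normalized cylinder integral as the average over $\{|z_{1}|<r\}$ of the averages over the $(n-1)$-balls $\{\|z'\|<\varepsilon\}$. Because $\psi$ is upper semi-continuous — hence locally bounded above and with $\limsup_{\varepsilon\to 0}$ of the small ball-averages at most the value at the centre — a reverse Fatou argument gives $\limsup_{\varepsilon\to 0}\frac{1}{\mu(P_{r,\varepsilon})}\int_{P_{r,\varepsilon}}\psi\,dV\le\frac{1}{\pi r^{2}}\int_{|z_{1}|<r}\psi(z_{0}+z_{1}w)\,dV(z_{1})$. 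Combined with the core estimate this yields the disc submean-value inequality stated above for all small $r$ and all directions, so $\psi$ is plurisubharmonic. Hence $h^{*}$ is Griffiths semi-negative, i.e.\ $(E,h)$ is Griffiths semi-positive.

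The conceptual step is short: one converts the $L^{p}$-extension bound into a submean-value inequality for $\log|s|_{h^{*}}$ by pairing the extension $f$ against $s$ and combining the mean value property of the holomorphic function $\langle s,f\rangle$ with Jensen's inequality. The main obstacle is the measure-theoretic bookkeeping in the degeneration: carrying out the reverse Fatou step correctly for merely upper semi-continuous integrands, dealing with the analytic set $\{s=0\}$ where $\psi\equiv-\infty$ and where $\psi$ could a priori fail to be locally integrable (the inequalities just derived in fact force local integrability at each point where $\psi(z_{0})>-\infty$, which is all that is needed), and ensuring that the plurisubharmonic submean-value inequality is only invoked on a set where $\langle s,f\rangle$ is genuinely holomorphic — the reason for extending over the slightly larger cylinder $P'$. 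Finally, since the pairing $\langle s,f\rangle$ and all the norms are defined in the same way regardless of rank, the argument applies verbatim to bundles of infinite rank.
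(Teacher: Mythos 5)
Your proof takes essentially the same route as the paper: pair the dual section $s$ with the $L^p$-extension $f$, use $|s|_{h^*}\ge|\langle s,f\rangle|/|f|_h$, then combine the submean-value inequality for the plurisubharmonic function $\log|\langle s,f\rangle|$ with Jensen's inequality for $\log$ applied to $|f|_h^p$, and finally pass from the resulting cylinder submean-value inequality for $\log|s|_{h^*}$ to plurisubharmonicity. The one structural difference is that the paper closes the loop by citing \cite[Lemma 3.1]{DNW1}, which directly equates ``upper semi-continuous and submean on all small holomorphic cylinders'' with plurisubharmonicity, whereas you redo that last reduction by hand (degenerating thin cylinders $A(P_{r,\varepsilon})$ to discs via a reverse-Fatou argument); your use of an approximating sequence $a_k$ and the slightly enlarged cylinder $P'$ are harmless technical refinements that the paper dispenses with. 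This is the same argument, just with one cited lemma replaced by its proof.
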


A first related result in this direction was given by Guan-Zhou in \cite{GZh15d},
where they showed that Berndtsson's plurisubharmonic variation of the relative Bergman kernels \cite{Bob06}
can be deduced from the optimal $L^2$-extension condition.
By developing Guan-Zhou's method, Hacon-Popa-Schnell in \cite{HPS16} proved that a Hermitian vector bundle $(E,h)$ with singular Hermitian metric
is Griffiths semi-positive if it satisfies the so called minimal extension condition, a notion defined there as mentioned above.
Theorem \ref{thm: optimal Lp extension : Griffiths positive-intr} is proved by combining the ideas in \cite{GZh15d,HPS16}
and a lemma in \cite{DNW1}.

\begin{thm}\label{thm: multiple coarse Lp: Griffiths positivity-intr.}
Let $E$ be a holomorphic vector bundle over a domain $D\subset\mc^n$,
and $h$ be a singular Finsler metric on $E$,
 such that $|s|_{h^*}$ is upper semi-continuous
for any local holomorphic section $s$ of $E^*$.
If $(E,h)$ satisfies the multiple coarse  $L^p$-extension condition for some $p>0$, then $(E,h)$ is Griffiths semi-positive.
\end{thm}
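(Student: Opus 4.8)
The plan is to use the standard fact that, for a singular Finsler metric, Griffiths semi-positivity of $(E,h)$ is (by definition) equivalent to the plurisubharmonicity of $\log|u|_{h^*}$ on the domain of every local holomorphic section $u$ of $E^*$, and then to verify this plurisubharmonicity through a sub-mean-value inequality over small balls, the extension sections $f_m$ doing the work. Since $|u|_{h^*}$ is upper semi-continuous by hypothesis, it will suffice — by a standard characterization of plurisubharmonic functions among upper semi-continuous, locally bounded-above functions, of the kind established in \cite{DNW1} — to prove
\[
\log|u(z_0)|_{h^*}\ \le\ \frac{1}{\mu(B)}\int_{B}\log|u|_{h^*}\,d\mu
\]
for every $z_0$ in the domain $U\subset D$ of $u$ and every ball $B=B(z_0,r)\Subset U$. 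I would reduce at once to the case $0<|u(z_0)|_{h^*}<\infty$, the case $|u(z_0)|_{h^*}=0$ being trivial and the locus where $h$ degenerates (where $|u|_{h^*}$ may be $+\infty$) being absorbed into the semicontinuity hypothesis as in \cite{DWZZ1}.

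Next, fixing $z_0,u,r$ and $\varepsilon>0$, I would choose $a\in E_{z_0}$ with $|a|_h=1$ and $|\langle u(z_0),a\rangle|\ge(1-\varepsilon)|u(z_0)|_{h^*}>0$, which is possible since $|u(z_0)|_{h^*}$ is by definition the supremum of $|\langle u(z_0),b\rangle|$ over unit vectors $b$. For each $m\ge1$ the multiple coarse $L^p$-extension condition then yields $f_m\in H^0(D,E^{\otimes m})$ with $f_m(z_0)=a^{\otimes m}$ and $\int_D|f_m|^p\,d\mu\le C_m$, where $\frac1m\log C_m\to0$. Setting $g_m:=\langle u^{\otimes m},f_m\rangle\in\mathcal{O}(U)$, one has $g_m(z_0)=\langle u(z_0),a\rangle^{m}\ne0$, hence $\log|g_m(z_0)|\ge m\log(1-\varepsilon)+m\log|u(z_0)|_{h^*}$; and the Cauchy--Schwarz-type inequality for the tensor-power Finsler metrics (with the normalizations of \cite{DWZZ1}, under which $|u^{\otimes m}|_{(h^{\otimes m})^*}\le|u|_{h^*}^{m}$) gives the pointwise bound $|g_m(w)|\le|u(w)|_{h^*}^{m}\,|f_m(w)|$ on $U$.

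Since $g_m$ is holomorphic and $\not\equiv0$, the function $\log|g_m|$ is plurisubharmonic, so its sub-mean-value property on $B$ together with the last bound gives
\[
\log|g_m(z_0)|\ \le\ \frac{1}{\mu(B)}\int_{B}\log|g_m|\,d\mu\ \le\ \frac{m}{\mu(B)}\int_{B}\log|u|_{h^*}\,d\mu\ +\ \frac{1}{\mu(B)}\int_{B}\log|f_m|\,d\mu .
\]
By Jensen's inequality the last term is $\le\frac1p\log\!\bigl(\tfrac{1}{\mu(B)}\int_{B}|f_m|^p\,d\mu\bigr)\le\frac1p\log\!\bigl(C_m/\mu(B)\bigr)$, using $\int_B|f_m|^p\le\int_D|f_m|^p\le C_m$. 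Substituting the lower bound for $\log|g_m(z_0)|$, dividing by $m$, letting $m\to\infty$ (so that $\frac{1}{pm}\log(C_m/\mu(B))\to0$) and finally $\varepsilon\to0$, I would obtain the sub-mean-value inequality displayed above, and hence the plurisubharmonicity of $\log|u|_{h^*}$ and the Griffiths semi-positivity of $(E,h)$.

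The step I expect to be the main obstacle is the Finsler bookkeeping: verifying the dual/tensor-power norm inequality $|u^{\otimes m}|_{(h^{\otimes m})^*}\le|u|_{h^*}^{m}$, the (near-)attainment of the dual norm at $z_0$ by a unit vector, and the treatment of the degeneracy locus of $h$ so that the semicontinuity hypothesis genuinely propagates plurisubharmonicity there as well. Once these points are settled — essentially by importing the conventions and lemmas of \cite{DWZZ1} and \cite{DNW1} — the analytic heart of the argument, namely the $m\to\infty$ averaging through Jensen's inequality, is routine.
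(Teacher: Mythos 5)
Your proposal follows the paper's line of attack almost exactly (tensor powers, the dual-norm bound $|\langle u^{\otimes m},f_m\rangle|\le |u|_{h^*}^m |f_m|$, Jensen's inequality, then division by $m$ and the growth condition $\tfrac1m\log C_m\to0$), and the small technical refinements you add — the $(1-\varepsilon)$-approximate dual-norm attainment and the handling of the degeneracy locus — are sound. However, there is a real gap in the final reduction. You claim it suffices to verify the sub-mean-value inequality
\[
\log|u(z_0)|_{h^*}\ \le\ \frac{1}{\mu(B)}\int_{B}\log|u|_{h^*}\,d\mu
\]
over Euclidean balls $B=B(z_0,r)$. For an upper semi-continuous function, the sub-mean-value property over balls is equivalent to \emph{subharmonicity}, not plurisubharmonicity; for example $|z_1|^2-|z_2|^2$ is harmonic on $\mathbb{C}^2$ and satisfies the mean-value property over all balls, yet fails to be plurisubharmonic. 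This is precisely why the paper (and Definition~\ref{def:Lp extension}) work with \emph{holomorphic cylinders} $z+A(P_{r,s})$, $A\in U(n)$: letting $s\to0$ makes these domains degenerate onto complex discs in every direction, so the corresponding sub-mean-value inequalities detect plurisubharmonicity, and that is exactly what \cite[Lemma~3.1]{DNW1} (invoked in the paper's proof) provides. The fix is easy — your Jensen/tensor-power computation is domain-shape-agnostic, so you can run it verbatim over $z_0+P$ in place of $B$ — but as written the step from your displayed inequality to ``$\log|u|_{h^*}$ is plurisubharmonic'' does not go through.
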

Theorem \ref{thm: multiple coarse Lp: Griffiths positivity-intr.} was originally proved in \cite{DWZZ1}.
In this paper, we give a new proof based on the idea in the proof of \cite[Theorem 1.5]{DNW1}.

In the case for trivial line bundles over bounded domains,
Theorem \ref{thm:theta-nakano text_intr}-\ref{thm: optimal Lp extension : Griffiths positive-intr} were proved in \cite{DNW1}.
We should emphasize that Theorem \ref{thm:theta-nakano text_intr}-\ref{thm: multiple coarse Lp: Griffiths positivity-intr.}
are true for vector bundles of infinite rank, as well as for those of finite rank.

\begin{rem}
In applications, it is possible to prove that $(E, e^{\phi}h)$ satisfies the optimal $L^p$-extension condition for some $\phi\in C^0(D)$,
by Theorem \ref{thm:coarse estimate text-intr} /Theorem \ref{thm: optimal Lp extension : Griffiths positive-intr}/Theorem \ref{thm: multiple coarse Lp: Griffiths positivity-intr.}, which implies that
$$i\Theta_E\geq i\partial\bar\partial\phi\otimes Id_E$$
in the sense that $i\partial\bar\partial \log|s|^2_{h^*}\geq i\partial\bar\partial\phi$
in the sense of currents, for any nonvanishing local holomorphic section $s$ of $E^*$.
\end{rem}

We now explain why Theorem \ref{thm:theta-nakano text_intr}-\ref{thm: multiple coarse Lp: Griffiths positivity-intr.} can be
roughly viewed as converses of $L^2$-estimate  of $\bar\partial$ and Ohsawa-Takegoshi type $L^2$-extensions.

Let $(X,\omega)$ be a weakly pseudoconvex K\"ahler manifold, and $(E,h)$ be a Hermitian holomorphic vector bundle over $X$.
If the curvature of $(E,h)$ is Nakano semi-positive, then $(E,h)$ satisfies the optimal $L^2$-estimate condition
and the multiple coarse $L^2$-estimate condition by works of H\" ormander \cite{Hor65} and Demailly \cite{Dem82}.
Combining Theorem \ref{thm:theta-nakano text_intr}, we see in this setting that \emph{Nakano positivity for $(E,h)$ is equivalent to
the optimal $L^2$-estimate condition}.
Considering Theorem \ref{thm:coarse estimate text-intr} which shows that
the multiple coarse $L^2$-estimate condition for $(E,h)$ implies Griffiths positivity of $(E,h)$,
it is natural to propose the following
\begin{problem}
Does the multiple coarse $L^2$-estimate condition of $(E,h)$ imply the Nakano positivity of $(E,h)$?
\end{problem}

Let $(E,h)$ be a Hermitian holomorphic vector bundle over a bounded pseudoconvex domain $D$.
If the curvature of $(E,h)$ is Nakano semi-positive,
then $(E,h)$ satisfies the multiple coarse $L^2$-extension condition by Ohsawa-Takegoshi \cite{OT1},
and satisfies the optimal $L^2$-extension condition by B\l ocki \cite{Bl} and Guan-Zhou \cite{GZh12}\cite{GZh15d}.
Theorem \ref{thm: optimal Lp extension : Griffiths positive-intr} and Theorem \ref{thm: multiple coarse Lp: Griffiths positivity-intr.}
show that the optimal $L^2$-extension condition and the multiple coarse $L^2$-extension condition imply
Griffiths positivity of $(E,h)$.
It is observed in \cite{HI} that the optimal $L^2$-extension condition and the multiple coarse $L^2$-extension condition
are strictly weaker than the Nakano positivity in some cases.
Motivated by this and the above theorems, we believe that the optimal $L^2$-estimate condition and the multiple coarse $L^2$-estimate condition are equivalent to the Nakano positivity,
while, the optimal $L^2$-extension condition and the multiple coarse $L^2$-extension condition are equivalent to the Griffiths positivity.
So we propose the following problem.

\begin{problem}
Does the Griffiths positivity imply the optimal $L^2$-extension condition and the multiple coarse $L^2$-extension condition?
\end{problem}

\begin{rem}
By the proof of \cite[Proposition 0.2]{Ber-Pau10},
one can see that the optimal $L^2$-extension condition and the multiple coarse $L^2$-extension condition imply
the optimal $L^p$-extension condition and the multiple coarse $L^p$-extension condition for $0<p<2$.
\end{rem}

The second part of this paper is to apply the above theorems to study the curvature positivity of
direct image sheaves of twisted relative canonical bundles associated to holomorphic fibrations,
which is a topic of extensive study in recent years (see \cite{Bob06, Bob09a, BP08, Ber-Pau10, LS14, GZh15d, PT18, Cao141, HPS16, ZZ17, ZhouZhu,  Bob18, DWZZ1}).
The main novelty here is that Theorem \ref{thm:theta-nakano text_intr} provides a natural explanation and a very simple
unified proof of the Nakano positivity of direct image bundles associated to families of both Stein manifolds and compact K\"ahler manifolds,
with an effective estimate of the lower bound of the curvatures.

We first consider a family of bounded domains.
Let $\Omega=U\times D\subset \mc^n_t\times\mc^m_z$ be a bounded pseudoconvex domain and $p:\Omega\ra U$ be the natural projection.
Let $h$ be a Hermitian metric on the trivial bundle $E=\Omega\times\mc^r$ that is $C^2$-smooth to $\overline\Omega$.
For $t\in U$, let
$$F_t:=\{f\in H^0(D, E|_{\{t\}\times D}):\|f\|^2_{t}:=\int_D|f|^2_{h_t}<\infty\}$$
and $F:=\coprod_{t\in U}F_t$.
Since $h$ is continuous to $\overline\Omega$, $F_t$ are equal for all $t\in U$ as vector spaces.
We may view $(F, \|\cdot\|)$ as a trivial holomorphic Hermitian vector bundle of infinite rank  over $U$.

\begin{thm}\label{thm: direc im stein-intr}
Let $\theta$ be a continuous real $(1,1)$-form on $U$ such that $i\Theta_{E}\geq p^*\theta\otimes Id_E$,
then $i\Theta_{F}\geq \theta\otimes Id_F$ in the sense of Nakano.
In particular, if $i\Theta_{E}>0$ in the sense of Nakano, then $i\Theta_{F}>0$ in the sense of Nakano.
\end{thm}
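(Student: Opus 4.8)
The plan is to derive the theorem from Theorem~\ref{thm:theta-nakano text_intr}, applied to the (infinite‑rank) Hermitian holomorphic bundle $(F,\|\cdot\|)$ over $U$ together with the given form $\theta$. This is legitimate: since $\Omega=U\times D$ is bounded pseudoconvex, $U$ is a bounded pseudoconvex domain — hence Kähler, carrying a positive Hermitian holomorphic line bundle and a complete Kähler metric — and since $h$ is $C^{2}$ up to $\overline\Omega$, the map $t\mapsto\|\cdot\|_{t}$ is a smooth Hermitian metric on the (trivial) holomorphic bundle $F$. Thus it suffices to verify that $(F,\|\cdot\|)$ and $\theta$ satisfy the hypothesis of Theorem~\ref{thm:theta-nakano text_intr}; this yields $i\Theta_{F}\ge\theta\otimes Id_{F}$ in the sense of Nakano, and the last assertion follows by the same argument localized over $U$ with $\theta$ replaced by a suitable positive $(1,1)$-form.

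To verify the hypothesis, fix a positive $(A,h_{A})$ on $U$ with $i\Theta_{A,h_{A}}\otimes Id_{F}+\theta>0$ on $\text{supp}\,f$ and a $\bar\partial$-closed $f\in\mathcal{C}^\infty_c(U,\wedge^{n,1}T^{*}_{U}\otimes F\otimes A)$ with finite right‑hand side; set $\beta:=\theta+i\Theta_{A,h_{A}}$. The key point is that a smooth section of $F$ over an open subset of $U$ is the same as a function on the corresponding part of $\Omega$ that is holomorphic along each fibre $\{t\}\times D$. Hence $f$ lifts to a form $\tilde f$ on $\Omega$, valued in $E\otimes p^{*}A$, of bidegree $(n,1)$ in the base directions and with coefficients holomorphic in $z$; writing $dz:=dz_{1}\wedge\cdots\wedge dz_{m}$ (a trivialization of $K_{\Omega/U}$) and $\hat E:=E\otimes p^{*}A$, the form
$$\hat f:=\tilde f\wedge dz\in\mathcal{C}^\infty_c\bigl(\Omega,\wedge^{n+m,1}T^{*}_{\Omega}\otimes\hat E\bigr)$$
is $\bar\partial$-closed, because $\bar\partial_{U}f=0$ and the coefficients of $\tilde f$ are holomorphic in $z$. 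Equip $\hat E$ with $h\otimes p^{*}h_{A}$ and $\Omega$ with the product Kähler metric $\omega_{\Omega}=p^{*}\omega_{U}+\pi_{D}^{*}\omega_{0}$ ($\pi_{D}\colon\Omega\to D$ the projection, $\omega_{0}$ Euclidean). Then $i\Theta_{\hat E}=i\Theta_{E,h}+p^{*}i\Theta_{A,h_{A}}\otimes Id_{E}\ge p^{*}\beta\otimes Id_{E}$ in the sense of Nakano, the inequality being exactly the hypothesis $i\Theta_{E}\ge p^{*}\theta\otimes Id_{E}$. By H\"ormander's $L^{2}$-estimate on the pseudoconvex manifold $\Omega$, with curvature lower bound $p^{*}\beta\otimes Id_{E}$ (which is invertible on the span of $\hat f$, since $\beta>0$ on $\text{supp}\,f$), one obtains $\hat u$ with $\bar\partial\hat u=\hat f$ and $\int_{\Omega}|\hat u|^{2}_{h\otimes p^{*}h_{A}}\,dV_{\omega_{\Omega}}\le\int_{\Omega}\bigl\langle[p^{*}\beta\otimes Id_{E},\Lambda_{\omega_{\Omega}}]^{-1}\hat f,\hat f\bigr\rangle_{h\otimes p^{*}h_{A}}\,dV_{\omega_{\Omega}}$.

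Two observations complete the verification. First, $\hat u$ descends: every $(n+m,0)$-form on $\Omega$ is a multiple of $dt_{1}\wedge\cdots\wedge dt_{n}\wedge dz$, and since $\hat f$ has no $d\bar z$-component, the equation $\bar\partial\hat u=\hat f$ forces the coefficient of $\hat u$ to be holomorphic in $z$; hence $\hat u$ is the lift of some $u\in L^{2}(U,\wedge^{n,0}T^{*}_{U}\otimes F\otimes A)$ with $\bar\partial_{U}u=f$. Second, Fubini's theorem, the product structure of $\omega_{\Omega}$, and the wedge by $dz$ identify $\int_{\Omega}|\hat u|^{2}_{h\otimes p^{*}h_{A}}\,dV_{\omega_{\Omega}}$ with $\int_{U}|u|^{2}_{\|\cdot\|\otimes h_{A}}\,dV_{\omega_{U}}$ (the $D$-integration of $|\cdot|^{2}_{h_{t}}$ being precisely the fibre norm of $F$) and, since $[p^{*}\beta\otimes Id_{E},\Lambda_{\omega_{\Omega}}]$ restricts fibrewise to $[\beta\otimes Id_{F},\Lambda_{\omega_{U}}]=B_{h_{A},\theta}$ on $\text{supp}\,\hat f$, identify the right‑hand side with $\int_{U}\langle B_{h_{A},\theta}^{-1}f,f\rangle_{\|\cdot\|\otimes h_{A}}\,dV_{\omega_{U}}$ (finite by assumption). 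We conclude
$$\int_{U}|u|^{2}_{\|\cdot\|\otimes h_{A}}\,dV_{\omega_{U}}\le\int_{U}\bigl\langle B_{h_{A},\theta}^{-1}f,f\bigr\rangle_{\|\cdot\|\otimes h_{A}}\,dV_{\omega_{U}},$$
which is the hypothesis of Theorem~\ref{thm:theta-nakano text_intr}.

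The step I expect to be the real obstacle is making H\"ormander's estimate \emph{with the sharp constant} genuinely applicable on $\Omega$. The lower bound $p^{*}\beta\otimes Id_{E}$ is positive only over $\text{supp}\,f\times D$ and need not be Nakano semipositive on all of $\Omega$ (because $\beta$ need not be $\ge0$ away from $\text{supp}\,f$), whereas the sharp $L^{2}$-estimate requires both global semipositivity of the curvature lower bound and a complete Kähler metric. Completeness is routine: exhaust $\Omega$ by relatively compact pseudoconvex subdomains, solve on each, and pass to a weak limit, the constant being preserved. The semipositivity is the delicate part — one cannot cure it by enlarging the weight of $A$ off $\text{supp}\,f$ without damaging the constant — and I would handle it by reducing, using the localization already present in the proof of Theorem~\ref{thm:theta-nakano text_intr} together with the pseudoconvexity of $U$, to the case where $\beta=\theta+i\Theta_{A,h_{A}}\ge0$ on all of $U$. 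Carrying out that reduction carefully, rather than any genuinely new idea, is where the proof really costs something.
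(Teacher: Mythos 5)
Your argument is essentially the paper's own proof of Theorem \ref{thm: direc im stein}: identify $F$-valued forms on $U$ with $E\otimes p^*A$-valued forms on $\Omega$ that are holomorphic along the fibres, wedge with $dz$, solve $\bar\partial$ on $\Omega$ via Lemma \ref{thm: L2 estimate Nakano} together with the curvature comparison $i\Theta_{E\otimes p^*A}\geq p^*(\theta+i\Theta_{A,h_A})\otimes Id_E$, descend the solution and compute both sides by Fubini, and then invoke Theorem \ref{thm:theta-nakano text}. The one step you defer --- reducing to the case $\theta+i\Theta_{A,h_A}\geq 0$ on all of $U$ before applying the sharp-constant $L^2$-estimate --- is not carried out in the paper either (its proof applies Lemma \ref{thm: L2 estimate Nakano} directly, using positivity of $\theta+i\Theta_{A,h_A}$ on $\text{supp}\,f$ only for the pointwise comparison of inverses), and your separate completeness worry is unneeded, since the bounded pseudoconvex $\Omega$ already carries a complete K\"ahler metric, which is all that Lemma \ref{thm: L2 estimate Nakano} requires.
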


Let $\pi:X\rightarrow U$ be a proper holomorphic submersion from  a K\" ahler manifolds $X$ of complex dimension $m+n$,  to a bounded pseudoconvex domain $U$, and $(E,h)$ be a Hermitian holomorphic vector bundle over $X$, with  Nakano semi-positive  curvature.
From the Ohsawa-Takegoshi extension theorem, the direct image $F:=\pi_*(K_{X/U}\otimes E)$ is a vector bundle,
whose fiber over $t\in U$ is $F_t=H^0(X_t, K_{X_t}\otimes E|_{X_t})$.
There is a hermtian metric $\|\cdot\|$ on $F$ induced by $h$:
 for any $u\in F_t$,
 $$\|u(t)\|^2_t:=\int_{X_t}c_mu\wedge \bar u,$$
where $m=\dim X_t$, $c_m=i^{m^2}$, and $u\wedge \bar u$ is the composition of the wedge product and the inner product on $E$.
So we get a Hermitian holomorphic vector bundle $(F, \|\cdot\|)$ over $U$.

\begin{thm}\label{thm: direct image-optimal L2 estimate-intr}
The Hermitian holomorphic vector bundle $(F, \|\cdot\|)$ over $U$ defined above
satisfies the optimal $L^2$-estimate condition.
Moreover, if $i\Theta_{E}\geq p^*\theta\otimes Id_E$ for a continuous real $(1,1)$-form $\theta$ on $U$,
then $i\Theta_{F}\geq \theta\otimes Id_F$ in the sense of Nakano.
\end{thm}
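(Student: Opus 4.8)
The plan is to verify the optimal $L^2$-estimate condition for $(F,\|\cdot\|)$ directly, by reducing every estimate fibrewise to the Ohsawa–Takegoshi-type $L^2$-estimate on the total space $X$, and then to invoke Theorem \ref{thm:theta-nakano text_intr} to convert this into Nakano positivity. The first and conceptually cleanest step is to observe that, by Remark \ref{rem:reduce to trivial bundle}, since $U\subset\mc^n$ is a bounded pseudoconvex domain (hence carries a strictly plurisubharmonic function), it suffices to test the optimal $L^2$-estimate condition against the trivial line bundle $A=U\times\mc$ equipped with an arbitrary smooth strictly plurisubharmonic weight $\varphi$, with $B_{A,h_A}=[i\partial\bar\partial\varphi\otimes Id_F,\Lambda_{\omega_U}]$ for a fixed K\"ahler metric $\omega_U$ on $U$.

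The heart of the argument is the identification of $(n,q)$-forms on $U$ valued in $F=\pi_*(K_{X/U}\otimes E)$ with $(n+m,q)$-forms on $X$ valued in $E$ that are "relatively top-degree". Concretely, a section $f$ of $\wedge^{n,1}T^*_U\otimes F\otimes A$ over $U$ corresponds to a $\bar\partial$-closed $E$-valued $(n+m,1)$-form $\tilde f$ on $X$, using $K_{X/U}\otimes\pi^*K_U\cong K_X$ and pulling back $\varphi$ to $\pi^*\varphi$ on $X$; the fibre integration $\int_{X_t}c_m(\cdot)\wedge\overline{(\cdot)}$ that defines $\|\cdot\|$ matches the $L^2$-norm of $\tilde f$ along fibres, and the curvature hypothesis $i\Theta_{E,h}\geq\pi^*\theta\otimes Id_E$ together with Nakano semi-positivity of $E$ guarantees that $i\Theta_{E,h}+i\pi^*\partial\bar\partial\varphi\otimes Id_E$ is Nakano positive on $X$ whenever $i\partial\bar\partial\varphi>0$ on the support. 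Then H\"ormander–Demailly's $L^2$-estimate on the (weakly pseudoconvex, as $U$ is pseudoconvex and $\pi$ proper) K\"ahler manifold $X$ produces $\tilde u$ with $\bar\partial\tilde u=\tilde f$ and
$$\int_X|\tilde u|^2\,dV\leq\int_X\langle \tilde B^{-1}\tilde f,\tilde f\rangle\,dV,$$
where $\tilde B=[i\Theta_{E,h}+i\pi^*\partial\bar\partial\varphi\otimes Id_E,\Lambda_{\tilde\omega}]$ for a suitable K\"ahler metric $\tilde\omega$ on $X$; pushing $\tilde u$ down (after checking it is relatively holomorphic, i.e. lands in $F$, which follows from the ellipticity/degree considerations on fibres) gives $u$ with $\bar\partial u=f$. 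The remaining point is the sharp comparison of curvature operators: one must check that fibre integration sends $\langle \tilde B^{-1}\tilde f,\tilde f\rangle$ to $\langle B_{A,h_A}^{-1}f,f\rangle_{h\otimes h_A}$, i.e. that the Nakano-type operator $B_{A,h_A}$ on $F$ built from $i\partial\bar\partial\varphi$ alone dominates the effect of $\tilde B$ after integrating out the fibre directions (here the fibre-direction curvature of $E$ only helps, so one gets the desired inequality in the right direction). This uses the standard curvature formula for direct images together with the fact that the extra positivity from $\Theta_{E,h}$ along fibres can only decrease the right-hand side.

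I expect the main obstacle to be precisely this last comparison: making rigorous, at the level of pointwise quadratic forms on $F$, that the fibre-integrated version of $\langle\tilde B^{-1}\tilde f,\tilde f\rangle$ is bounded by $\langle B_{A,h_A}^{-1}f,f\rangle$ with the optimal constant $1$, rather than just up to a bounded factor. One has to be careful about the choice of $\tilde\omega$ (it should restrict to a fixed metric on fibres and to $\pi^*\omega_U$ in a way compatible with the bidegree bookkeeping), about the boundary regularity near $\partial X=\pi^{-1}(\partial U)$ (handled by exhausting $U$ by relatively compact pseudoconvex subdomains and passing to the limit, using that the estimates are uniform), and about the fact that $\varphi$ is only smooth and strictly plurisubharmonic on the support of $f$, not globally — which is exactly the flexibility built into the hypothesis of Theorem \ref{thm:theta-nakano text_intr}. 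Once the optimal $L^2$-estimate with the $\theta$-twist is established on each exhausting subdomain and in the limit on $U$, the conclusion $i\Theta_F\geq\theta\otimes Id_F$ in the sense of Nakano is immediate from Theorem \ref{thm:theta-nakano text_intr} (applied with the $\theta$ there equal to $\theta\otimes Id_F$), and taking $\theta=0$ recovers the plain statement that $(F,\|\cdot\|)$ satisfies the optimal $L^2$-estimate condition.
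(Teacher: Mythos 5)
Your outline follows the same broad route as the paper: reduce to the trivial twist $A$ via Remark \ref{rem:reduce to trivial bundle}, lift a compactly supported $\bar\partial$-closed $F$-valued $(n,1)$-form $f$ on $U$ to an $E$-valued $(n+m,1)$-form $\tilde f$ on $X$, solve $\bar\partial\tilde u=\tilde f$ there by H\"ormander--Demailly, use the degree bookkeeping ($\tilde f$ has no $d\bar z$-components) to see $\tilde u|_{X_t}$ is holomorphic, push down, and conclude with Theorem \ref{thm:theta-nakano text_intr}. But you explicitly flag the crucial step --- ``making rigorous \dots\ that the fibre-integrated version of $\langle\tilde B^{-1}\tilde f,\tilde f\rangle$ is bounded by $\langle B^{-1}_{A,h_A}f,f\rangle$ with the optimal constant $1$'' --- as the main obstacle, and you do not supply the argument. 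Invoking ``the standard curvature formula for direct images'' is also the wrong instinct here: the whole point of this theorem (see the introduction) is to give a proof that avoids Berndtsson-style curvature computations for the direct image, so appealing to such a formula would be circular with respect to the method.

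What actually closes the gap in the paper is a two-step comparison. First, on $\Lambda^{n+m,1}T^*_X\otimes E$ the Nakano semi-positivity of $(E,h)$ gives $[i\Theta_{E,h}+i\partial\bar\partial\pi^*\psi\otimes Id,\Lambda_\omega]\ge[i\partial\bar\partial\pi^*\psi\otimes Id,\Lambda_\omega]$, hence $\langle\tilde B^{-1}\tilde f,\tilde f\rangle\le\langle[i\partial\bar\partial\pi^*\psi\otimes Id,\Lambda_\omega]^{-1}\tilde f,\tilde f\rangle$ (this is the part you did correctly identify, that fibre positivity only helps). Second --- and this is the missing ingredient --- the quantity $\langle[i\Theta,\Lambda_\omega]^{-1}u,u\rangle_\omega\,dV_\omega$ for $(n,1)$-forms $u$ is in fact independent of the choice of Hermitian metric $\omega$; this is Lemma \ref{lem:inverse cur. indep of metric} in the Appendix. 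Granting this, you may compute the right-hand side with the product metric $\omega'=i\sum dt_j\wedge d\bar t_j+i\sum dz_l\wedge d\bar z_l$ near each point, for which $[i\partial\bar\partial\pi^*\psi\otimes Id,\Lambda_{\omega'}]^{-1}$ acts purely on the $d\bar t$-indices; Fubini then gives the exact identity
$$\int_X\langle[i\partial\bar\partial\pi^*\psi\otimes Id,\Lambda_\omega]^{-1}\tilde f,\tilde f\rangle\,e^{-\pi^*\psi}\,dV_\omega=\int_U\langle B_\psi^{-1}f,f\rangle_t\,e^{-\psi}\,dV_{\omega_0},$$
with constant $1$. Without this metric-independence lemma your estimate is stuck with an unwanted dependence on $\tilde\omega$, which is exactly the worry you raised; so the proposal as written has a genuine gap at the step you identified.
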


Theorem \ref{thm: direc im stein-intr} in the case that $E$ is a line bundle is a result of Berndtsson \cite[Theorem 1.1]{Bob09a},
the case for vector bundles $E$ without lower bound estimate was proved by Raufi \cite[Theorem 1.5]{Raufi 13} with the same method of Berndtsson.
Theorem \ref{thm: direct image-optimal L2 estimate-intr} in the case that $L$ is a line bundle is due to Berndtsson \cite{Bob09a},
and the case for vector bundles was proved in \cite{MoTa08} and \cite{LiYa14} by developing the method of Berndtsson.

Our method to Theorem \ref{thm: direc im stein-intr} and Theorem \ref{thm: direct image-optimal L2 estimate-intr} is very different.
In fact, taking Theorem \ref{thm:theta-nakano text_intr} for granted,
one can clearly see why Theorem \ref{thm: direc im stein-intr} and Theorem \ref{thm: direct image-optimal L2 estimate-intr} should be true,
since it is obvious that the bundles $F$ in Theorem \ref{thm: direc im stein-intr} and Theorem \ref{thm: direc im stein-intr}
satisfy the optimal $L^2$-estimate condition by H\"ormander's $L^2$-estimate of $\bar\partial$.

In this paper, we also provide some new methods to show that $(F, \|\cdot\|)$ is Griffiths semi-positive,
via Theorem \ref{thm:coarse estimate text-intr}, \ref{thm: optimal Lp extension : Griffiths positive-intr},
and \ref{thm: multiple coarse Lp: Griffiths positivity-intr.}.
By applying the tensor-power technique introduced in \cite{DWZZ1},
we show that $(F, \|\cdot\|)$ satisfies the multiple coarse $L^2$-estimate condition;
by applying the Ohsawa-Takegoshi extension theorem with optimal estimate for vector bundles (\cite{GZh15d}, \cite{ZhouZhu192}),
we show that $(F, \|\cdot\|)$ satisfies the optimal $L^2$-extension condition;
and by applying the tensor-power technique mentioned above and the Ohsawa-Takegoshi extension theorem,
we show that $(F, \|\cdot\|)$ satisfies the multiple coarse $L^2$-extension condition.\\

$\mathbf{Acknowlegements.}$
The authors are partially supported respectively by NSFC grants (11871451,  11801572, 11701031, 11688101).
The first author was partially supported by the University of Chinese Academy of Sciences.
The third author  is partially supported by Beijing Natural Science Foundation (Z190003, 1202012).

\section{Preliminaries}
\subsection{An extension property of Hermitian metrics on a line bundle}

In this section, we present a basic property of  K\" ahler manifolds, which admit positive Hermitian holomorphic line bundles.

 \begin{prop}\label{prop: construct function 1}Let $X$ be a K\" ahler  manifold, which admits a positive Hermitian holomorphic line bundle, and $(A,h_A)$ be a positive Hermitian holomorphic line bundle over $X$. Let $(U\subset X, z=(z_1,\cdots, z_n))$ be a coordinate chart on $X$, such that $A|_U$ is trivial, and $B\Subset U$ be a coordinate ball. Then for any smooth strictly plurisubharmonic function $\psi$ on $U$, there is a  positive integer $m$, and a Hermitian metric $h_m$ on the line bundle $A^{\otimes m}$, such that $h_m=e^{-\psi_m}$ on $U$ with $\psi_m|_B=\psi$.
 \end{prop}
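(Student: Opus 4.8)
The plan is to take $h_m$ of the form $h_A^{\otimes m}e^{-\chi}$ for a large integer $m$ and a globally defined real-valued smooth function $\chi$ on $X$, chosen so that in the given trivialization of $A|_U$ the local weight $\psi_m=m\varphi+\chi$ of $h_m$ agrees with $\psi$ on $B$ while $h_m$ stays positive (here $h_A=e^{-\varphi}$ on $U$, and $\varphi\in C^\infty(U)$ is strictly plurisubharmonic since $(A,h_A)$ is positive).

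First I would reduce to a local interpolation problem. Fix concentric coordinate balls $B\Subset B'\Subset B''\Subset U$. It is enough to find, for some large $m$ and some constant $c_m$, a smooth strictly plurisubharmonic function $\eta_m$ on a neighbourhood of $\overline{B''}$ with $\eta_m=\psi$ on $B$ and $\eta_m\equiv m\varphi+c_m$ on $\overline{B''}\setminus B'$: gluing $e^{-\eta_m}$ over $B''$ to $h_A^{\otimes m}e^{-c_m}$ over $X\setminus\overline{B'}$ along the annulus $B''\setminus\overline{B'}$, where both expressions equal $e^{-(m\varphi+c_m)}$, produces a smooth Hermitian metric $h_m$ on $A^{\otimes m}$ whose curvature is $i\partial\bar\partial\eta_m>0$ on $B''$ and $m\,i\Theta_{A,h_A}>0$ on $X\setminus\overline{B'}$, hence positive, and whose weight on $U$ restricts to $\psi$ on $B$.

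To construct $\eta_m$ I would use the regularized maximum $\eta_m=\max\nolimits_\varepsilon(\psi,\,m\varphi+c_m)$, which is smooth and strictly plurisubharmonic as the regularized maximum of two such functions, equals $\psi$ where $\psi\geq m\varphi+c_m+\varepsilon$, and equals $m\varphi+c_m$ where $m\varphi+c_m\geq\psi+\varepsilon$; so it does the job once one can choose $\varepsilon>0$ and $c_m$ with
$$\max_{\overline{B''}\setminus B'}(\psi-m\varphi)+2\varepsilon\ \leq\ c_m\ \leq\ \min_{\overline{B}}(\psi-m\varphi)-2\varepsilon .$$
Dividing through by $m$ and letting $m\to\infty$, this is solvable for $m$ large provided $\min_{\overline{B''}\setminus B'}\varphi>\max_{\overline{B}}\varphi$. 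That inequality can fail for $\varphi$ itself, so the preliminary step is to replace $h_A$ by the positive metric $h_Ae^{-\lambda}$ for a suitable $\lambda\in C^\infty(X)$ with $\lambda\equiv 0$ near $\overline{B}$, $\lambda$ plurisubharmonic near $\overline{B''}$, and $\lambda$ large on $\overline{B''}\setminus B'$; plurisubharmonicity of $\lambda$ near $\overline{B''}$ preserves $i\Theta_{A,h_A}+i\partial\bar\partial\lambda>0$ there, and its size on the annulus forces $\min_{\overline{B''}\setminus B'}(\varphi+\lambda)>\max_{\overline{B}}(\varphi+\lambda)$. Re-running the construction with this twisted (still positive) metric in place of $h_A$ then completes the argument.

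The main obstacle is the twisting step: one must correct the "shape" of the weight on the annulus without destroying positivity, and the familiar point that cutting a plurisubharmonic function back down to $0$ outside $\overline{B''}$ introduces negative Hessian terms has to be handled. I would deal with this by spreading the cut-off of $\lambda$ over a large region (so that its residual negative Hessian is small) and/or by adding a convex power $c\,\lambda^2$, using the strict positivity of $i\Theta_{A,h_A}$ on the relevant compact set to absorb the leftover, and only then invoking the freedom to take $m$ as large as needed.
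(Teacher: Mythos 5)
Your reduction to a local interpolation and the regularized-max step itself are correct, and they run parallel to the paper's own proof (which also glues $\max_\epsilon$ of $\psi$ against a competitor built from $m\phi$ and recovers $h_A^{\otimes m}$ outside a compact subset of $U$). The genuine gap is the preliminary twisting step, and after your reduction the whole proof rests on it: you must produce $\lambda\in C^\infty(X)$ with $\lambda\equiv0$ near $\overline B$, $\min_{\overline{B''}\setminus B'}(\varphi+\lambda)>\max_{\overline B}\varphi$, and $i\Theta_{A,h_A}+i\partial\bar\partial\lambda>0$ on all of $X$. The height $\lambda$ must reach is a fixed quantity $M_0\approx\max_{\overline B}\varphi-\min_{\overline{B''}\setminus B'}\varphi$, which can be large (the inequality you need can genuinely fail before twisting, e.g.\ when the sublevel set $\{\varphi<\max_{\overline B}\varphi\}$ connects $B$ to $\partial U$), and none of your proposed devices delivers such a $\lambda$. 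If $\lambda=\rho\,\lambda_0$ with $\lambda_0$ a psh model built in the coordinates of $U$ and $\rho$ a cut-off, then by the maximum principle $\max\lambda_0\geq\max_{\overline{B''}}\lambda_0\geq M_0$ on every hypersurface of $U$ enclosing $\overline{B''}$, so on $\operatorname{supp}d\rho$ the terms $\lambda_0\, i\partial\bar\partial\rho$ and $i\partial\rho\wedge\bar\partial\lambda_0+i\partial\lambda_0\wedge\bar\partial\rho$ are of size comparable to $M_0$ times derivatives of $\rho$; these must be absorbed by the fixed form $i\Theta_{A,h_A}$ on the fixed compact set $\operatorname{supp}d\rho\subset U\setminus B''$ (the model only exists in the chart, so "spreading the cut-off over a large region" is capped by $U$), and no inequality in the hypotheses guarantees this. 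Crucially, "taking $m$ as large as needed" cannot repair it: $m$ enters only after the twist, since it is $h_Ae^{-\lambda}$ itself that must be positive; and if you twist $h_A^{\otimes k}$ instead, the gap to be overcome becomes $k(\max_{\overline B}\varphi-\min\varphi)$, so the required size of $\lambda$ and the available curvature scale by the same factor and the unfavorable ratio is unchanged. Adding $c\lambda^2$ does not help either: its Hessian is $2c(\lambda\, i\partial\bar\partial\lambda+i\partial\lambda\wedge\bar\partial\lambda)$, whose first term is negative exactly where the problem sits.

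This is precisely where your route departs from the paper's, and why the paper's choice of correction matters. There the crossing is forced by adding to $m\phi$ the $m$-independent term $\chi\log(\|z\|^2-1)$: this term is bounded on the region where the cut-off $\chi$ varies, so the loss caused by cutting it off is $O(1)$ and is absorbed by $m\,i\partial\bar\partial\phi$ once $m$ is large, while its pole along $\partial B$ makes the outer competitor drop below $\psi$ near $\partial B$ no matter what shape $\varphi$ has — the job your $\lambda$ was meant to do, but performed inside $U$ and with the large parameter $m$ decoupled from the function being cut off. In your scheme the function to be cut off has size tied to the same fixed data as the absorber, so this absorption mechanism is unavailable. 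To complete your argument you would need an actual existence proof for $\lambda$ (a capacity-type statement about $(X,A,h_A,U,B)$ that is not implied by the hypotheses), or you should redesign the correction so that, as in the paper, its large or unbounded part lives only where no cut-off is required.
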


 \begin{proof}
 Assume that $h_A|_U=e^{-\phi}$ for some smooth strictly plurisubharmonic function $\phi$ on $U$.
 We may assume that $\phi>0$.
 We may assume that $B=B_1$ is the unit ball, and the ball $B_{1+3\delta}$ with radius $1+3\delta$ is also contained in $U$, for  $0<\delta\ll 1$.
  Let $\chi$ be a cut-off function on $U$,
 such that $\chi$ is identically equal to $1$ near $\overline B_{1+\delta}$ and vanishes outside  $B_{1+2\delta}$.
 Let $\phi_{m}:=m\phi+\chi\log(\|z\|^2-1)$ on $U\backslash B_{1}$,
 where $m\gg 1$ is an integer such that $\phi_m$ is strictly p.s.h on $U$
 and $\phi_{m} >\psi$ on $\partial B_{1+\delta}$.

 Now we define a function $\psi_m$ on $U$ as follows:

 \begin{align*}
 \psi_m=\left\{
 \begin{array}{ll}
 \phi_m, & \hbox{outside $B_{1+\delta}$;} \\
  \max_\epsilon\{\phi_{m}, \psi\} , & \hbox{on $B_{1+\delta}\setminus B_{1}$;}\\
  \psi, & \hbox{on $B_{1}$.}
 \end{array}
 \right.
 \end{align*}

Then for $0<\epsilon\ll 1$, $\psi_m$ is strictly p.s.h on $U$, $\psi_m|_B=\psi$,
and equals to $m\phi$ on $U\backslash B_{1+2\delta}$.
So $\psi_m$ gives a Hermitian metric on $A^{\otimes m}|_U$ which coincides with $h^{\otimes m}$ on $U\backslash B_{1+2\delta}$.
 \end{proof}
 \subsection{Basics of Hermitian holomorphic vector bundles}

 Let $(X,\omega)$ be a complex manifold of complex dimension $n$, equipped with a Hermitian metric $\omega$,
 and $(E,h)$ be a Hermitiann holomorphic vector bundle of rank $r$ over $X$.
 In this subsection, we assume  $r<\infty$.

 Let $D=D'+\bar{\partial}$ be the Chern connection of $(E,h)$, and $\Theta_{E,h}=[D',\bar\partial]=D'\bar{\partial}+\bar{\partial}D'$ be the Chern curvature tensor.
 Denote by $(e_1,\cdots, e_r)$ an orthonormal frame of $E$ over a coordinate patch $\Omega\subset X$ with complex coordinates $(z_1,\cdots, z_n)$, and
 $$i\Theta_{E,h}=i\sum_{1\leq j,k\leq n,1\leq \lambda,\mu\leq r}c_{jk\lambda\mu}dz_j\wedge d\bar z_k\otimes e^*_\lambda\otimes e_{\mu}, ~~\bar c_{jk\lambda\mu}=c_{kj\mu\lambda}.$$
 To $i\Theta_{E,h}$ corresponds a natural Hermitian form $\theta_{E,h}$ on $TX\otimes E$ defined by
 \begin{align}\label{eqn: chern curvature form}
 \theta_{E,h}(u,u)=\sum_{j,k,\lambda,\mu}c_{jk\lambda\mu}(x)u_{j\lambda}\bar u_{k\mu}, ~~~~~u\in T_xX\otimes E_x
 .\end{align}

 \begin{defn}\label{defn:positivities}
 \begin{itemize}
 \item  $E$ is said to be Nakano positive (resp. Nakano semi-positive) if $\theta_{E,h}$ is positive (resp. semi-positive) definite  as a Hermitian form on $TX\otimes E$, i.e. for every $u\in TX\otimes E$, $u\neq 0$, we have $$\theta(u,u)>0 ~~~(\mbox{resp.} \geq 0).$$
 \item $E$ is said to be Griffiths positive (resp. Griffiths semi-positive) if for any $x\in X$, all $\xi\in T_xX$ with $\xi\neq 0$, and $s\in E_x$ with $s\neq 0$, we have
 $$\theta(\xi\otimes s,\xi\otimes s)>0~~~(\mbox{resp.} \geq 0).$$
 \item Nakano negative (resp. Nakano semi-negative) and Griffiths negative (resp. Griffiths semi-negative) are similarly defined by replacing $>0$ (resp. $\geq 0$) by $<0$ (resp. $\leq 0$) in the above definitions respectively.
 \end{itemize}
 \end{defn}

 \begin{rem} The following are  basic facts about Griffiths positivity and Nakano positivity.
 \begin{itemize}
 \item It is a well-known fact that, a Hermitian holomorphic vector  bundle $(E,h)$ is Griffiths positive (resp. semi-positive) if and only if $(E^*,h^*)$ is Griffiths negative (resp. semi-negative). However, Nakano positivity does not share this duality condition, see \cite[Chapter VII, Page 339, Example 6.8]{Dem} for an example.
\item It is a fact that, Griffiths positivity can be explained as a several complex variables property, see Definition \ref{def:positivity of finsler} in \S \ref{subsec:finsler metric}. However, Nakano positivity does not have such a characterization.
\end{itemize}
\end{rem}

  \begin{rem}\label{rem: Nakano tensor product} Let $(E_1,h_1) $ and $(E_2,h_2)$ be two Hermitian holomoprhic vector bundles over a complex $n$-dimensional manifold $X$. It is a basic fact that the Chern connection $D_{E_1\otimes E_2}$ of $(E_1\otimes E_2, h_1\otimes h_2)$ is just $D_{E_1}\otimes \text{Id}_{E_2}+\text{Id}_{E_1}\otimes D_{E_2}$, and we have the following Chern curvature formula

  $$\Theta_{E_1\otimes E_2, h_1\otimes h_2}=\Theta_{E_1,h_1}\otimes \text{Id}_{E_2}+\text{Id}_{E_1}\otimes \Theta_{E_2,h_2}.$$
\end{rem}
From \eqref{eqn: chern curvature form} and Remark \ref{rem: Nakano tensor product}, we get the following

 \begin{lem}\label{lem: Nakano tensor product}
 Let $(E_1,h_1)$ and $(E_2,h_2)$ be two Hermitian holomorphic vector bundles over a complex manifold $X$. Let $(E,h):=(E_1\otimes E_2, h_1\otimes h_2)$. Then if $(E_1, h_1)$ and $(E_2,h_2)$ are Nakano positive (resp. Nakano semi-positive), then  $(E,h)$ is Nakano positive (resp. Nakano semi-positive).
 \end{lem}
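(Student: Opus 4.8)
\textbf{Proof proposal for Lemma \ref{lem: Nakano tensor product}.}

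The plan is to reduce the statement to a pointwise computation with the curvature Hermitian forms $\theta_{E_1,h_1}$, $\theta_{E_2,h_2}$, and $\theta_{E,h}$ on the spaces $T_xX\otimes (E_1)_x$, $T_xX\otimes(E_2)_x$, and $T_xX\otimes E_x$ respectively, using the curvature formula of Remark \ref{rem: Nakano tensor product}. Fix a point $x\in X$, choose holomorphic coordinates $(z_1,\dots,z_n)$ near $x$, and choose orthonormal frames $(e_\lambda)_{1\le\lambda\le r_1}$ of $E_1$ and $(e'_\mu)_{1\le\mu\le r_2}$ of $E_2$ near $x$. Then $(e_\lambda\otimes e'_\mu)$ is an orthonormal frame of $E=E_1\otimes E_2$, and by Remark \ref{rem: Nakano tensor product} the curvature coefficients of $(E,h)$ with respect to this frame are
\begin{equation*}
c_{jk,(\lambda\mu)(\lambda'\mu')} = c^{(1)}_{jk\lambda\lambda'}\,\delta_{\mu\mu'} + \delta_{\lambda\lambda'}\,c^{(2)}_{jk\mu\mu'},
\end{equation*}
where $c^{(1)}$, $c^{(2)}$ are the curvature coefficients of $(E_1,h_1)$, $(E_2,h_2)$.

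Next I would take an arbitrary $u\in T_xX\otimes E_x$ and write it in the frame as $u=\sum_{j,\lambda,\mu} u_{j\lambda\mu}\,\partial/\partial z_j\otimes e_\lambda\otimes e'_\mu$. Plugging the curvature formula above into the definition \eqref{eqn: chern curvature form} of the Hermitian form gives
\begin{equation*}
\theta_{E,h}(u,u) = \sum_{\mu} \theta_{E_1,h_1}(u^{(\mu)},u^{(\mu)}) + \sum_{\lambda}\theta_{E_2,h_2}(u_{(\lambda)},u_{(\lambda)}),
\end{equation*}
where for each fixed $\mu$ the element $u^{(\mu)}:=\sum_{j,\lambda}u_{j\lambda\mu}\,\partial/\partial z_j\otimes e_\lambda\in T_xX\otimes (E_1)_x$ is the "$\mu$-slice" of $u$, and similarly $u_{(\lambda)}:=\sum_{j,\mu}u_{j\lambda\mu}\,\partial/\partial z_j\otimes e'_\mu\in T_xX\otimes (E_2)_x$ is the "$\lambda$-slice". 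This is just a rearrangement of the double sum, separating the two terms of the curvature formula. If $(E_1,h_1)$ and $(E_2,h_2)$ are Nakano semi-positive, then each summand on the right is $\ge 0$, hence $\theta_{E,h}(u,u)\ge 0$ for all $u$, so $(E,h)$ is Nakano semi-positive.

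For the strict (Nakano positive) case, suppose $u\ne 0$; then some coefficient $u_{j_0\lambda_0\mu_0}\ne 0$, so the slice $u^{(\mu_0)}\ne 0$, and Nakano positivity of $(E_1,h_1)$ gives $\theta_{E_1,h_1}(u^{(\mu_0)},u^{(\mu_0)})>0$ while all other terms are $\ge 0$ by semi-positivity (which is implied by positivity); hence $\theta_{E,h}(u,u)>0$, so $(E,h)$ is Nakano positive. The main point requiring care — though it is genuinely routine — is the bookkeeping of indices in the rearrangement: one must check that the cross terms vanish, which they do precisely because of the Kronecker deltas $\delta_{\mu\mu'}$ and $\delta_{\lambda\lambda'}$ appearing in the tensor-product curvature formula, so that the double sum really does split as the sum of the two slice-wise forms with no interaction between the $E_1$ and $E_2$ directions. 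No other obstacle is expected; the statement is essentially a direct unwinding of Remark \ref{rem: Nakano tensor product} together with the definition of Nakano (semi-)positivity.
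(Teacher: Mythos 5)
Your proof is correct and is precisely the argument the paper has in mind: the paper states the lemma follows ``from \eqref{eqn: chern curvature form} and Remark \ref{rem: Nakano tensor product}'' without spelling out details, and your index bookkeeping fills in exactly those details, splitting $\theta_{E,h}(u,u)$ into sums over slices on which $\theta_{E_1,h_1}$ and $\theta_{E_2,h_2}$ act.
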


 \begin{lem}\label{lem: Nakano fiber tensor product}
 Let $\pi_i:X_i\rightarrow Y$ be two holomorphic submersions for $j=1,2$. Let $X\subset X_1\times X_2$ be the fiberwise product of $X_1$ and $X_2$ with respect to $\pi_1$ and $\pi_2$, and $pr_j:X\rightarrow X_j$ be the natural projections from $X$ to $X_j$  for  $j=1,2$. Let $(E_1,h_1)$ and $(E_2,h_2)$ be two  Hermitian  holomorphic vector bundles over $X_1$ and $X_2$, respectively. Denote by $(E, h)$ the Hermitian holomorphic vector bundle $(pr_1^*E_1\otimes pr_2^*E_2, pr_1^*h_1\otimes pr_2^* h_2)$ on $X$. If $(E_1,h_1)$ and $(E_2,h_2)$ are Nakano positive (resp. Nakano semi-positive), then $E$ is also Nakano positive (resp. Nakano semi-positive).
 \end{lem}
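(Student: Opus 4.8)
The plan is to treat an arbitrary point $x\in X$ and to exploit the one feature that distinguishes a fiberwise product from an arbitrary complex manifold carrying two submersions to $X_1$ and $X_2$: the two ``vertical'' subspaces of $T_xX$ meet only at the origin. Writing $x_j=pr_j(x)$ and using that $X\subset X_1\times X_2$ is cut out by $\pi_1(y_1)=\pi_2(y_2)$ with $\pi_1\times\pi_2$ transverse to the diagonal of $Y\times Y$, we have
$$T_xX=\{(v_1,v_2)\in T_{x_1}X_1\oplus T_{x_2}X_2:\ d\pi_1(v_1)=d\pi_2(v_2)\},$$
under which $d\,pr_1$ and $d\,pr_2$ are the two coordinate projections. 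In particular $\ker(d\,pr_1|_x)\cap\ker(d\,pr_2|_x)=\{0\}$. This observation is essentially the whole content of the lemma; the rest is bookkeeping.

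Next I would combine the curvature formula of Remark \ref{rem: Nakano tensor product} with the naturality of the Chern curvature under holomorphic pullback, $\Theta_{pr_j^*E_j,\,pr_j^*h_j}=pr_j^*\Theta_{E_j,h_j}$, to get
$$i\Theta_{E,h}=pr_1^*(i\Theta_{E_1,h_1})\otimes\mathrm{Id}_{pr_2^*E_2}+\mathrm{Id}_{pr_1^*E_1}\otimes pr_2^*(i\Theta_{E_2,h_2}).$$
Fixing orthonormal frames $(e^{(1)}_\lambda)$ of $E_1$ near $x_1$ and $(e^{(2)}_\mu)$ of $E_2$ near $x_2$, so that $(e^{(1)}_\lambda\otimes e^{(2)}_\mu)$ is an orthonormal frame of $E$ near $x$, and writing $u\in T_xX\otimes E_x$ as $u=\sum_{\lambda,\mu}v_{\lambda\mu}\otimes e^{(1)}_\lambda\otimes e^{(2)}_\mu$ with $v_{\lambda\mu}\in T_xX$, the standard index-by-index computation of the pullback curvature form turns the displayed decomposition into
$$\theta_{E,h}(u,u)=\sum_{\mu}\theta_{E_1,h_1}\!\big(w^{(1)}_\mu,w^{(1)}_\mu\big)+\sum_{\lambda}\theta_{E_2,h_2}\!\big(w^{(2)}_\lambda,w^{(2)}_\lambda\big),$$
where $w^{(1)}_\mu:=\sum_\lambda d\,pr_1(v_{\lambda\mu})\otimes e^{(1)}_\lambda\in T_{x_1}X_1\otimes(E_1)_{x_1}$ and $w^{(2)}_\lambda:=\sum_\mu d\,pr_2(v_{\lambda\mu})\otimes e^{(2)}_\mu\in T_{x_2}X_2\otimes(E_2)_{x_2}$.

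If $(E_1,h_1)$ and $(E_2,h_2)$ are Nakano semi-positive, every term on the right is $\geq 0$, so $\theta_{E,h}(u,u)\geq 0$ and $E$ is Nakano semi-positive. If they are Nakano positive and $\theta_{E,h}(u,u)=0$, then each of the two sums vanishes; positivity of $\theta_{E_1,h_1}$ forces $w^{(1)}_\mu=0$ for every $\mu$, i.e.\ $d\,pr_1(v_{\lambda\mu})=0$ for all $\lambda,\mu$, and likewise $d\,pr_2(v_{\lambda\mu})=0$ for all $\lambda,\mu$; by the transversality from the first paragraph, $v_{\lambda\mu}=0$ for all $\lambda,\mu$, so $u=0$. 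Hence $\theta_{E,h}$ is positive definite and $E$ is Nakano positive.

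The step I expect to demand the most care is this last one, and it also explains why the naive route fails: pulling a Nakano positive bundle back along a submersion does \emph{not} in general keep it Nakano positive — the associated form degenerates precisely along the vertical directions — so one cannot simply apply Lemma \ref{lem: Nakano tensor product} to $pr_1^*E_1$ and $pr_2^*E_2$ on $X$. The fiber-product hypothesis is exactly what makes the two degeneracy loci $\ker(d\,pr_1)$ and $\ker(d\,pr_2)$ transverse, and this is where the argument genuinely uses the structure of $X$. The pullback curvature identity and the index computation behind the displayed formula for $\theta_{E,h}(u,u)$ are entirely routine.
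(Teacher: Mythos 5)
Your proof is correct, and in fact the paper offers no argument for this lemma at all --- it is stated bare, as an evident companion to Lemma \ref{lem: Nakano tensor product}, so your computation supplies exactly the justification the authors leave implicit: the curvature of $(E,h)$ splits as $pr_1^*(i\Theta_{E_1,h_1})\otimes\mathrm{Id}+\mathrm{Id}\otimes pr_2^*(i\Theta_{E_2,h_2})$, the associated Hermitian form on $T_xX\otimes E_x$ becomes the block sum $\sum_\mu\theta_{E_1,h_1}(w^{(1)}_\mu,w^{(1)}_\mu)+\sum_\lambda\theta_{E_2,h_2}(w^{(2)}_\lambda,w^{(2)}_\lambda)$, and the strict case follows because $(d\,pr_1,d\,pr_2)$ is injective on $T_xX$. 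Two small remarks: the semi-positive case can be had even faster, since pullback along any holomorphic map preserves Nakano semi-positivity and then Lemma \ref{lem: Nakano tensor product} applies directly; and the injectivity of $(d\,pr_1,d\,pr_2)$ on $T_xX$ needs nothing beyond $X\subset X_1\times X_2$ being a submanifold --- the fiber-product/transversality structure is only needed to know that $X$ is a complex manifold and to identify $T_xX$ inside $T_{x_1}X_1\oplus T_{x_2}X_2$, so your framing of that as ``the whole content'' slightly overstates its role, but the argument as written is sound, including the point that strict Nakano positivity is \emph{not} preserved by $pr_j^*$ alone.
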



 For any $u\in \Lambda^{p,q}T^*_X\otimes E$, we consider the global $L^2$-norm
 \begin{align*}
 \|u\|^2=\int_X|u|_{\omega,h}^2dV_\omega,
 \end{align*}
 where $|u|_{\omega,h}$ is the pointwise Hermitian norm and $dV_\omega=\omega^n/n!$ is the volume form on $X$.  This $L^2$-norm induces an $L^2$-inner product on $\Lambda^{p,q}T^*_X\otimes E$, and thus we can define $D'^*$ and $\bar{\partial}^*$ operators as the (formal) adjoint of $D'$ and $\bar{\partial}$, respectively. Let $$\Delta'=D'D'^*+D'^*D', \ \ \Delta''=\bar{\partial}\bar{\partial}^*+\bar{\partial}^*\bar{\partial}$$ be the corresponding $D'$ and $\bar{\partial}$-Laplace operators.

 \begin{lem}[Bochner-Kodaira-Nakano identity, see \cite{Dem}]\label{lem: BKN identity}
 Let $(X,\omega)$ be a K\"{a}hler manifold, $(E,h)$ be a Hermitian vector bundle over $X$. The complex Laplacian operators
 $\Delta'=D'D'^*+D'^*D'$ and $\Delta''=\bar{\partial}\bar{\partial}^*+\bar{\partial}^*\bar{\partial}$ acting on $E$-valued forms satisfy the identity
 $$\Delta''=\Delta'+[i\Theta_{E,h},\Lambda_\omega].$$
 \end{lem}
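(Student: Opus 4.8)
The plan is to derive the identity from the \emph{K\"ahler commutation relations} between the components $D'$, $\bar\partial$ of the Chern connection and the Lefschetz operators $L_\omega:=\omega\wedge\cdot$ and $\Lambda_\omega:=L_\omega^{*}$, and then to rearrange commutators purely algebraically. The main step is to establish, for smooth $E$-valued forms of arbitrary bidegree on $(X,\omega)$, the relations
\begin{equation*}
[\Lambda_\omega,\bar\partial]=-i\,D'^{*},\qquad [\Lambda_\omega,D']=i\,\bar\partial^{*},
\end{equation*}
equivalently (taking formal adjoints) $[\bar\partial^{*},L_\omega]=iD'$ and $[D'^{*},L_\omega]=-i\bar\partial$. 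Both sides of each relation are first-order differential operators on smooth forms, so it suffices to check equality at an arbitrary point $x_0\in X$, where it depends only on the $1$-jets of $\omega$, of $h$, and of the form at $x_0$.

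At $x_0$ I would use the K\"ahler hypothesis to choose holomorphic coordinates centered at $x_0$ in which $\omega$ osculates the standard Hermitian metric to order two --- so that the first derivatives of the coefficients of $\omega$, hence of $\Lambda_\omega$ and $L_\omega$, vanish at $x_0$ --- together with a local holomorphic frame of $E$ that is normal at $x_0$, so that the Chern connection matrix $\theta=h^{-1}\partial h$ vanishes at $x_0$. In such coordinates and frame, $\bar\partial$ acts componentwise as the flat $\bar\partial$, and at $x_0$ the operators $D'=\partial+\theta\wedge\cdot$, its adjoint $D'^{*}$, the adjoint $\bar\partial^{*}$, and $\Lambda_\omega$ all agree with their flat scalar counterparts applied componentwise: the curvature $\Theta_{E,h}$, being encoded in the second derivatives of $h$, does not enter a first-order operator evaluated at a point. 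Thus each relation reduces at $x_0$ to the classical commutation identity on $(\mathbb{C}^{n},\text{standard metric})$ for scalar-valued forms, which I would confirm by a direct computation with the explicit coordinate expressions for $L_\omega$, $\Lambda_\omega$, $\partial$, $\bar\partial$ and their adjoints --- routine, but this is where the actual content sits.

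Granting the K\"ahler identities, the remaining step is formal. Writing $D'^{*}=i[\Lambda_\omega,\bar\partial]$ and $\bar\partial^{*}=-i[\Lambda_\omega,D']$ and expanding,
\begin{align*}
\Delta''&=\bar\partial\bar\partial^{*}+\bar\partial^{*}\bar\partial=-i\bigl(\bar\partial[\Lambda_\omega,D']+[\Lambda_\omega,D']\bar\partial\bigr),\\
\Delta' &=D'D'^{*}+D'^{*}D'=i\bigl(D'[\Lambda_\omega,\bar\partial]+[\Lambda_\omega,\bar\partial]D'\bigr).
\end{align*}
Subtracting, the terms of the form $\bar\partial\Lambda_\omega D'$ and $D'\Lambda_\omega\bar\partial$ cancel in pairs, and the four surviving terms reassemble as $-i\bigl(\Lambda_\omega(D'\bar\partial+\bar\partial D')-(D'\bar\partial+\bar\partial D')\Lambda_\omega\bigr)=-i[\Lambda_\omega,\Theta_{E,h}]=[i\Theta_{E,h},\Lambda_\omega]$, since $\Theta_{E,h}=D'\bar\partial+\bar\partial D'$. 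Hence $\Delta''=\Delta'+[i\Theta_{E,h},\Lambda_\omega]$, as claimed.

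The main obstacle is the first step: setting up cleanly the reduction of the K\"ahler commutation relations to the flat scalar model. The delicate points are that these are identities of first-order (unbounded) operators, so one argues pointwise on $1$-jets rather than through $L^{2}$-adjunction --- only the formal relations are needed, which is exactly what the lemma asserts --- and that one must verify that osculating coordinates together with a normal frame really annihilate, at $x_0$, all the first-order correction terms appearing in $D'^{*}$, $\bar\partial^{*}$, $\Lambda_\omega$ and $L_\omega$, leaving only the flat model. Once this is arranged, the rest is bookkeeping. This is the classical Kodaira--Nakano argument; see \cite{Dem}.
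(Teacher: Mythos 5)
Your argument is correct and is precisely the classical Kodaira--Nakano proof that the paper invokes by citing Demailly: establish the K\"ahler commutation relations $[\Lambda_\omega,\bar\partial]=-iD'^{*}$, $[\Lambda_\omega,D']=i\bar\partial^{*}$ by reducing, at a point, to the flat scalar model via coordinates osculating $\omega$ to order two and a normal frame for $(E,h)$, and then obtain $\Delta''-\Delta'=[i\Theta_{E,h},\Lambda_\omega]$ by the commutator bookkeeping with $\Theta_{E,h}=[D',\bar\partial]$. This is also exactly the reduction (normal frame with $(\tilde e_\lambda,\tilde e_\mu)_z=\delta_{\lambda\mu}+O(\|z\|^2)$) that the paper reuses in \S\ref{subsec:infinite rank v.b.} to extend the identity to infinite rank, so no gap.
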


 Let us say more on the Hermitian operator $[i\Theta_{E,h},\Lambda_\omega]$.  Let $x_0\in X$ and $(z_1,\cdots, z_n)$ be local coordinates centered at $x_0$, such that $(\partial/\partial z_1,\cdots, \partial/\partial z_n)$ is an orthonormal basis of $TX$ at $x_0$. One can write $$\omega=i\sum dz_j\wedge d\bar z_j+O(\|z\|),$$  and $$i\Theta_{E,h}(x_0)=i\sum_{j,k,\lambda,\mu}c_{jk\lambda\mu}dz_j\wedge d\bar z_k \otimes e^*_\lambda\otimes e_\mu,$$
 where $(e_1,\cdots, e_r)$ is an orthonormal basis of $E_{x_0}$. Let $u=\sum{u_{K,\lambda}}dz\wedge d\bar z_K\otimes e_\lambda\in \Lambda^{n,q}T^*_X\otimes E$ , where $dz=dz_1\wedge \cdots\wedge dz_n$.  In \cite[Chapter VII, Page 341, (7.1)]{Dem}, it is computed that
 \begin{align}\label{eqn: computation of B 1}
 \langle [i\Theta_{E,h},\Lambda_\omega]u,u\rangle=\sum_{|S|=q-1}\sum_{j,k,\lambda,\mu}c_{jk\lambda\mu}u_{jS,\lambda}\bar u_{kS,\mu}.
 \end{align}
 In particular, if $q=1$, \eqref{eqn: computation of B 1} becomes
 \begin{align}\label{eqn: computation of B 2}
 \langle [i\Theta_{E,h},\Lambda_\omega]u,u\rangle=\sum_{j,k,\lambda,\mu}c_{jk\lambda\mu}u_{j,\lambda}\bar u_{k,\mu}
 \end{align}
 Comparing \eqref{eqn: chern curvature form} and \eqref{eqn: computation of B 2}, we obtain the following
 \begin{lem}\label{lem: Hermitian operator formula}
 Let $(X,\omega)$ be a K\"{a}hler manifold, $(E,h)$ be a Hermitian vector bundle over $X$. Then $(E,h)$ is Nakano positive (resp. semo-positive) if and only if the Hermitian operator $[i\Theta_{E,h},\Lambda_\omega]$ is positive definite (resp. semi-positive definite) on $\Lambda^{n,1}T^*_X\otimes E$.
 \end{lem}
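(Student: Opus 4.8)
The plan is to reduce the statement to the pointwise linear-algebra comparison of the two quadratic forms already computed in \eqref{eqn: chern curvature form} and \eqref{eqn: computation of B 2}. Both sides of the asserted equivalence are pointwise conditions: Nakano (semi-)positivity of $(E,h)$ means, by Definition \ref{defn:positivities}, that $\theta_{E,h}$ is (semi-)positive definite on the fiber $T_{x_0}X\otimes E_{x_0}$ for every $x_0\in X$; and since $[i\Theta_{E,h},\Lambda_\omega]$ is an algebraic (zeroth-order) bundle operator, it is (semi-)positive definite on $\Lambda^{n,1}T^*_X\otimes E$ precisely when it is so on each fiber $\Lambda^{n,1}T^*_{x_0}X\otimes E_{x_0}$. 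So it suffices to fix $x_0$ and exhibit, on that fiber, a $\mathbb C$-linear isomorphism carrying the one Hermitian form to the other.

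First I would fix $x_0$ and choose, as in the discussion preceding \eqref{eqn: computation of B 2}, local holomorphic coordinates $(z_1,\dots,z_n)$ centered at $x_0$ with $\omega(x_0)=i\sum_j dz_j\wedge d\bar z_j$ and an orthonormal basis $(e_1,\dots,e_r)$ of $E_{x_0}$, writing $i\Theta_{E,h}(x_0)=i\sum_{j,k,\lambda,\mu}c_{jk\lambda\mu}\,dz_j\wedge d\bar z_k\otimes e_\lambda^*\otimes e_\mu$. Since $\Lambda^{n,0}T^*_{x_0}X$ is spanned by $dz:=dz_1\wedge\cdots\wedge dz_n$, every element of $\Lambda^{n,1}T^*_{x_0}X\otimes E_{x_0}$ is written uniquely as $\sum_{j,\lambda}u_{j\lambda}\,dz\wedge d\bar z_j\otimes e_\lambda$, so the map
\[
\Phi\colon T_{x_0}X\otimes E_{x_0}\longrightarrow \Lambda^{n,1}T^*_{x_0}X\otimes E_{x_0},\qquad
\sum_{j,\lambda}u_{j\lambda}\,\tfrac{\partial}{\partial z_j}\otimes e_\lambda\ \longmapsto\ \sum_{j,\lambda}u_{j\lambda}\,dz\wedge d\bar z_j\otimes e_\lambda,
\]
is a well-defined $\mathbb C$-linear isomorphism, both spaces having dimension $nr$.

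Then I would simply compare the two identities: for $u=\sum u_{j\lambda}\,\partial/\partial z_j\otimes e_\lambda\in T_{x_0}X\otimes E_{x_0}$, formula \eqref{eqn: chern curvature form} gives $\theta_{E,h}(u,u)=\sum_{j,k,\lambda,\mu}c_{jk\lambda\mu}u_{j\lambda}\bar u_{k\mu}$, while \eqref{eqn: computation of B 2} (the case $q=1$) applied to $\Phi(u)$ gives $\langle[i\Theta_{E,h},\Lambda_\omega]\Phi(u),\Phi(u)\rangle=\sum_{j,k,\lambda,\mu}c_{jk\lambda\mu}u_{j\lambda}\bar u_{k\mu}$; hence $\theta_{E,h}(u,u)=\langle[i\Theta_{E,h},\Lambda_\omega]\Phi(u),\Phi(u)\rangle$ for all $u$. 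Since positive- (resp. semi-positive-)definiteness of a Hermitian form is preserved under pullback by a linear isomorphism, $\theta_{E,h}$ is positive definite (resp. semi-positive definite) on $T_{x_0}X\otimes E_{x_0}$ if and only if $\langle[i\Theta_{E,h},\Lambda_\omega]\cdot,\cdot\rangle$ is positive definite (resp. semi-positive definite) on $\Lambda^{n,1}T^*_{x_0}X\otimes E_{x_0}$. Letting $x_0$ range over $X$ yields the lemma.

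I do not expect a serious obstacle here: the content is entirely contained in the already-quoted computation \eqref{eqn: computation of B 1}--\eqref{eqn: computation of B 2} from \cite{Dem}. The only point deserving a word of care is that $\Phi$ depends on the auxiliary coordinates and orthonormal frame and is not canonical; this is harmless, since for each fixed $x_0$ the existence of \emph{some} intertwining linear isomorphism already transfers (semi-)definiteness between the two forms, and the resulting conclusion — (semi-)positivity at every point — is manifestly independent of all choices.
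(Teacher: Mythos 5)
Your proof is correct and is essentially the paper's own argument: the lemma is obtained there precisely by comparing \eqref{eqn: chern curvature form} with the pointwise computation \eqref{eqn: computation of B 2}, and your map $\Phi$ merely makes the implicit identification $\sum u_{j\lambda}\,\partial/\partial z_j\otimes e_\lambda \leftrightarrow \sum u_{j\lambda}\,dz\wedge d\bar z_j\otimes e_\lambda$ explicit. No gap; the extra care about non-canonicity of $\Phi$ is fine but not needed beyond the remark you already make.
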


\subsection{Basic concepts and conditions of Hermitian  vector bundles of infinite rank}\label{subsec:infinite rank v.b.}
In this subsection, we will briefly discuss some concepts and basic conditions
of Hermitian holomorphic vector bundles of infinite rank,
and explain why the above mentioned Bochner-Kodaira-Nakano identity also holds in this framework.

Let $H$ be a Hilbert space (separable, say over $\mc$) with inner product $(,)$. Let $U\subset \mathbb R^n$ be open.
Let $f:U\rightarrow H$ be a map. If
\begin{align*}
\frac{\partial f}{\partial x_j}=\lim_{\Delta x_j\rightarrow 0}\frac{f(x_1,\cdots,x_j+\Delta x_j,\cdots, x_{n})-f(x_1,\cdots,x_{n})}{\Delta x_j}\in H
\end{align*}
exists and is continuous on $U$ for any $j=1,2,\cdots,n$,
$f$ is called of $C^1$.
We say $f$ is of $C^r$  if all partial derivatives $\frac{\partial^rf}{\partial x_1^{r_1}\cdots \partial x_{n}^{r_{n}}}: U\rightarrow H$ of order $r$ exist and continuous,
and $f$ is smooth if $f$ is of $C^r$ for any $r$.

Let $\{e_\lambda\}^{\infty}_{\lambda=1}$ be an orthonormal basis of $H$.
Then a map $f:U\ra H$ can be written as $(f_1, f_2,\cdots)$, where $f_\lambda$ are functions on $U$ such that
$$\|f(z)\|^2=\sum_\lambda|f_\lambda(z)|^2.$$
If $f$ is continuous, by Dini's theorem, one can see that the series $\sum_\lambda|f_\lambda(z)|^2$
converges uniformly locally on $U$ to $\|f(z)\|^2$;
and if $f$ is smooth, then
$\sum_\lambda |\frac{\partial^r f_\lambda}{\partial x_1^{r_1}\cdots \partial x_n^{r_n}}|^2$ locally uniformly converges to $\|\frac{\partial^r f}{\partial x_1^{r_1}\cdots \partial x_n^{r_n}}\|^2$.

Now assume $U\subset\mc^n$ be an open set.
A map $f:U\ra H$ is called holomorphic if $f$ is smooth and satisfies the Cauchy-Riemann equation
$$\frac{\partial}{\partial\bar z_j}f:=\frac{1}{2}(\frac{\partial}{\partial x_j}+i\frac{\partial}{\partial y_j})f=0,\ j=1, \cdots, n.$$

We now consider holomorphic vector bundles of infinite rank with $H$ as the model of the fibers.
In the present paper, we will focus on local conditions of holomorphic vector bundles,
So we just consider the trivial bundle $E:=U\times H\rightarrow U$,
here we view $H$ as a locally convex topological vector space.

\begin{defn}
A Hermitian metric on $E$ is a map
\begin{align*}
h:U\rightarrow Herm(H)
\end{align*}
which satisfies the following conditions:
\begin{itemize}
\item [(1)] $h$ is smooth, and
\item [(2)] $h(z)\geq \delta(z) Id$ for some positive continuous function $\delta$ on $U$,
\end{itemize}
where $Herm(H)$ is the space of self-adjoint bounded operators on $H$.
\end{defn}

Given $h$ as above, we get a smooth family of inner products on $H$ as
$$(u,v)_z=(h(z)u,v),\ z\in U.$$
So our definition of the Hermitian metric on $E$ matches to the definition of Hermitian metrics for holomorphic vector bundles
of finite rank.

Given a Hermitian metric $h$ on $E$, we can define a unique connection $D=D'+\bar\partial$ on $E$ which is compatible with $h$ and whose $(0,1)$-part is $\bar\partial$,
as in the finite rank case. We view a section of $E$ as a map from $U$ to $H$.
Assume $u$ is a smooth section of $E$, and $v\in H$ viewed as a constant section of $E$,
then from the condition that
$$\partial (hu,v)=(hD'u,v),$$
we get
$$D'u=h^{-1}\partial (hu).$$
This formula shows that $D'u$ is a smooth section of $\Lambda^{1,0}T^*U\otimes E$.

The curvature operator of $(E,h)$ is give by
$$\Theta_{E,h}=[D',\bar\partial],$$
which is an operator that maps smooth sections of $E$ to smooth sections of $\Lambda^{1,1}T^*U\otimes E$.
In the same way as in the case of finite rank vector bundles,
Nakano positivity and Griffiths positivity can be defined for $(E,h)$.

We now show that, at any point $z_0\in U$, the metric $h$ coincides with a flat metric up to order 1.
To show this,  we may assume $z_0=0$ and $h_0=Id$.
Let $h_j=\frac{\partial h}{\partial z_j}(0),\ j=1,\cdots, n$, and define
$$\tilde e_\lambda=e_\lambda-\sum_j z_jh_j(e_\lambda),$$
then
$$(\tilde{e}_{\lambda},\tilde{e}_{\mu})_z=\delta_{\lambda\mu}+O(\|z\|^2),$$
where the bound for $O(\|z\|^2)$ is uniform for $\lambda, \mu$.

With the above preparation,
following the line of the proof of \cite[Theorem 1.1, Theorem 1.2, Chapter VII, \S 1]{Dem},
we see that the Bochner-Kodaira-Nakano identity also holds for $(E,h)$.
\subsection{Singular Finsler metrics on holomorphic vector bundles}\label{subsec:finsler metric}

In this subsection, we recall the notions of singular Finsler metrics on holomorphic vector bundles and  positively curved singular Finsler metrics on coherent analytic sheaves,  introduced in \cite{DWZZ1}, see also \cite{DWZZ2}.

 \begin{defn}\label{def:finsler on v.b.} Let $E\rightarrow X$ be a holomorphic vector bundle over a complex manifolds $X$. A (singular) Finsler metric $h$ on $E$ is a function $h:E\rightarrow [0, +\infty]$, such that $|cv|^2_h:=h(cv)=|c|^2h(v)$ for any $v\in E$ and $c\in \mathbb C$.
 \end{defn}

 \begin{defn}
 For a singular Finsler metric $h$ on $E$, its dual Finsler metric $h^*$ on the dual bundle $E^*$ of $E$ is defined as follows. For $f\in E^*_x$, the fiber of $E^*$ at $x\in X$, $|f|_{h^*}$ is defined to be $0$ if $|v|_h=+\infty$ for all nonzero $v\in E_x$; otherwise, $$|f|_{h^*}:=\sup\{|f(v)|: v\in E_x,  |v|_h\leq 1\}\leq +\infty.$$
 \end{defn}

 \begin{defn}\label{def:positivity of finsler}
 Let $(E,h)$ be a holomorphic vector bundle over a complex manifold $X$, equipped with a singular Finsler metric $h$. We call $h$ is negatively curved (in the sense of Griffiths) if for any local holomorphic section $s$ of $E$, the function $\log|s|^2_h$ is plurisubharmonic, and we call $h$ is positively curved (in the sense of Griffiths) if its dual metric $h^*$ is negatively curved.
 \end{defn}

 \begin{defn}\label{def:finsler on sheaf}
 Let $\mathcal F$ be a coherent analytic sheaf on a complex manifold $X$. Let $Z\subset X$ be an analytic subset of $X$ such that $\mathcal F|_{X\setminus Z}$ is locally free.  A positively curved singular Finsler metric $h$ on $\mathcal F$ is a singular Finsler metric on the holomorphic vector bundle $\mathcal F|_{X\setminus Z}$, such that for any local holomorphic section $s$ of the dual sheaf $\mathcal F^*$ on an open set $U\subset X$, the functionn $\log |s|_{h^*}$ is plurisubharmonic on $U\setminus Z$, and can be extended to a plurisubharmonic function on $U$.
 \end{defn}

 \begin{rem}
 Suppose that $\log|s|_{h^*}$  is p.s.h. on $U\setminus Z$.
 It is well-known that if  codim$_{\mathbb{C}}(Z)\geq 2$ or $\log|s|_{H^*}$ is locally bounded above near $Z$,
 then $\log|s|_{h^*}$ extends across $Z$ to $U$ uniquely as a p.s.h function.
 Definition \ref{def:finsler on sheaf} matches Definition \ref{def:finsler on v.b.}
 and Definition \ref{def:positivity of finsler} if $\mathcal F$ is a vector bundle.
 \end{rem}

\subsection{$L^2$ theory of $\bar\partial$}
In this section, we  recall H\" ormander's $L^2$-estimate of $\bar\partial$ and Ohsawa-Takegoshi type $L^2$-extension of holomorphic sections, for holomorphic vector bundles.

We first clarify some notions and notations.
Let $H$ be a Hilbert space with an inner product $(\cdot, \cdot)$, and $A:H\ra H$ be a bounded semi-positive self-adjoint operator with closed range $Im A$.
The we have an orthogonal decomposition
$$H=Im A\oplus \ker A$$
and $A|_{Im A}:Im A\ra Im A$ is a linear isomorphism.
In the remaining of the paper, we always denote $A|^{-1}_{Im A}$ by $A^{-1}$,
as in general references about complex geometry,
and define $(A^{-1}v, v)=+\infty$ if $v\not\in ImA$.

\begin{lem}[c.f. {\cite[Theorem 4.5]{Dem}}]\label{thm: L2 estimate Nakano}Let $(X,\omega)$ be a complete K\" ahler manifold, with a K\" ahler metric which is not necessarily complete.  Let $(E,h)$ be a Hermitian  vector bundle of rank $r$ over $X$, and assume that the curvature operator $B:=[i\Theta_{E,h},\Lambda_\omega]$ is semi-positive definite everywhere on $\Lambda^{p,q}T_X^*\otimes E$, for some $q\geq 1$. Then for any form $g\in L^2(X,\Lambda^{p,q}T^*_{X}\otimes E)$ satisfying $\bar{\partial}g=0$ and $\int_X\langle B^{-1}g,g\rangle dV_\omega<+\infty$, there exists $f\in L^2(X,\Lambda^{p,q-1}T^*_X\otimes E)$ such that $\bar{\partial}f=g$ and $$\int_X|f|^2dV_\omega\leq \int_X\langle B^{-1}g,g\rangle dV_\omega.$$
\end{lem}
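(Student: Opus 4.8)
The plan is to follow the standard Hörmander--Andreotti--Vesentini functional-analytic scheme, reducing the solvability of $\bar\partial f = g$ with the stated estimate to a single \emph{a priori inequality} for the formal adjoint $\bar\partial^*$, and then removing the completeness assumption on the Kähler metric by the usual exhaustion trick. First I would set up the Hilbert space $L^2(X,\Lambda^{p,q}T^*_X\otimes E)$ with the inner product induced by $\omega$ and $h$, and recall that on a \emph{complete} Kähler manifold the operators $\bar\partial$ and $\bar\partial^*$ (and likewise $D'$, $D'^*$) admit a common core of smooth compactly supported forms; this is the point where completeness of $\omega$ is used, via the existence of cutoff functions $\chi_\nu$ with $|d\chi_\nu|_\omega \to 0$. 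On this core the Bochner--Kodaira--Nakano identity (Lemma~\ref{lem: BKN identity}), combined with $\|D'^*\varphi\|^2 \geq 0$, yields for every $\varphi \in \mathrm{Dom}(\bar\partial)\cap\mathrm{Dom}(\bar\partial^*)$ in the core the fundamental inequality
$$\|\bar\partial\varphi\|^2 + \|\bar\partial^*\varphi\|^2 \;\geq\; \int_X \langle B\varphi,\varphi\rangle\, dV_\omega.$$

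Next I would exploit this to solve the equation. Decompose $\varphi = \varphi_1 + \varphi_2$ along the (closed) subspace $\ker\bar\partial$ and its orthogonal complement; since $g \in \ker\bar\partial$ we may estimate $|(g,\varphi)|^2 = |(g,\varphi_1)|^2$, and because $B$ is semipositive with the convention that $\langle B^{-1}g,g\rangle = +\infty$ off $\mathrm{Im}\,B$, the Cauchy--Schwarz inequality relative to the form $\langle B\,\cdot\,,\cdot\rangle$ gives
$$|(g,\varphi)|^2 \;\leq\; \left(\int_X \langle B^{-1}g,g\rangle\, dV_\omega\right)\left(\int_X \langle B\varphi_1,\varphi_1\rangle\, dV_\omega\right) \;\leq\; \left(\int_X \langle B^{-1}g,g\rangle\, dV_\omega\right)\,\|\bar\partial^*\varphi_1\|^2,$$
using $\bar\partial^*\varphi_2 = 0$ and the a priori inequality applied to $\varphi_1$ (note $\bar\partial\varphi_1 = 0$). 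Thus the linear functional $\bar\partial^*\varphi \mapsto (g,\varphi)$ is well-defined and bounded on the image of $\bar\partial^*$ with norm at most $\left(\int_X\langle B^{-1}g,g\rangle\,dV_\omega\right)^{1/2}$; by Hahn--Banach and the Riesz representation theorem there is $f$ with $\|f\|^2 \leq \int_X\langle B^{-1}g,g\rangle\,dV_\omega$ and $(f,\bar\partial^*\varphi) = (g,\varphi)$ for all admissible $\varphi$, which means precisely $\bar\partial f = g$ in the sense of distributions.

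Finally I would treat the case where $\omega$ is not complete: by hypothesis $X$ carries \emph{some} complete Kähler metric $\widehat\omega$, so I would run the above argument with the family $\omega_\varepsilon = \omega + \varepsilon\widehat\omega$, each of which is complete. One checks that the curvature term and the pointwise norms behave monotonically in the right direction for $(n,q)$-forms — this is where restricting (in the main applications) to $p=n$ is convenient, since $\langle B^{-1}g,g\rangle\, dV_{\omega_\varepsilon}$ and $|f|^2\, dV_{\omega_\varepsilon}$ decrease as $\varepsilon \downarrow 0$ — obtaining solutions $f_\varepsilon$ with uniformly bounded $L^2(\omega_\varepsilon)$-norms. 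A weak-compactness argument extracts a weak limit $f$ as $\varepsilon \to 0$, and lower semicontinuity of the norm under weak convergence plus monotone convergence gives the desired estimate with respect to $\omega$; that $\bar\partial f = g$ persists in the limit is immediate since $\bar\partial$ is a closed operator. The main obstacle is the bookkeeping in this last limiting step — ensuring the monotonicity of the relevant densities and the uniform bounds are genuinely independent of $\varepsilon$ — but this is entirely standard (see \cite[Chapter VIII]{Dem}); the heart of the proof is the a priori inequality coming from Bochner--Kodaira--Nakano together with the Hahn--Banach duality argument.
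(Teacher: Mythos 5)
The paper offers no proof of this lemma at all---it is quoted as a known result from the cited reference (Demailly's book), so there is nothing internal to compare against; your proposal correctly reproduces the standard H\"ormander--Andreotti--Vesentini argument from that source: the a priori inequality $\|\bar\partial\varphi\|^2+\|\bar\partial^*\varphi\|^2\geq\int_X\langle B\varphi,\varphi\rangle\,dV_\omega$ via Bochner--Kodaira--Nakano and completeness (density of compactly supported forms), the decomposition along $\ker\bar\partial$ with the pointwise Cauchy--Schwarz for $B$, the Hahn--Banach/Riesz duality step, and the $\omega_\varepsilon=\omega+\varepsilon\widehat\omega$ exhaustion with weak limits. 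Your parenthetical caveat is apt: the monotonicity of $|\cdot|^2\,dV_{\omega_\varepsilon}$ and $\langle B_\varepsilon^{-1}\cdot,\cdot\rangle\,dV_{\omega_\varepsilon}$ that drives the non-complete case is a statement about $(n,q)$-forms, which is the only case ($p=n$) the paper ever uses.
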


The following $L^2$-extension theorem for K\"ahler families is due to Zhou-Zhu {\cite[Theorem 1.1]{ZhouZhu192}}. The same result for  projective families is due to Guan-Zhou \cite{GZh12, GZh15d}.
\begin{lem}[{\cite[Theorem 1.1]{ZhouZhu192}}]\label{thm: optimal L2 extension}
Let $\pi:X\rightarrow B$ be a proper holomorphic submersion from a  complex $n$-dimensional K\" ahler manifold $(X,\omega)$ onto a unit ball in $\mathbb C^m$. Let  $(E,h=h_E)$ be a Hermitian holomorphic   vector bundle over $X$, such that the curvature  $i\Theta_{E,h_E}\geq 0$ in  the sense of Nakano. Let $t_0\in B$ be an arbitrarily fixed point. Then for every section $u\in H^0(X_{t_0}, K_{X_{t_0}}\otimes E|_{X_{t_0}})$, such that
\begin{align*}
\int_{X_{t_0}}|u|_{\omega, h}^2dV_{\omega_{X_{t_0}}}<+\infty,
\end{align*}
there is a section $\widetilde u\in H^0(X,K_X\otimes E)$, such that $\widetilde u|_{X_{t_0}}=\widetilde u\wedge dt$, with the following $L^2$-estimate
\begin{align*}
\int_X|\widetilde u|^2_{\omega,h}dV_{X,\omega}\leq \mu({B})\int_{X_{t_0}}|u|_{\omega, h}^2dV_{\omega_{X_{t_0}}},
\end{align*}
where $dt=dt_1\wedge\cdots\wedge dt_m$, and $t=(t_1,\cdots, t_m)$ are  the holomorphic coordintes on $\mathbb C^m$, and $\mu(B)$ is the volume of the unit ball in $\mathbb C^m$ with respect to the Lebesgue measure on $\mathbb C^m$.
\end{lem}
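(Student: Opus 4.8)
The plan is to prove this by the Guan--Zhou method for optimal Ohsawa--Takegoshi extension: use a twisted variant of H\"ormander's estimate (Lemma~\ref{thm: L2 estimate Nakano}), which comes from the same Bochner--Kodaira--Nakano identity (Lemma~\ref{lem: BKN identity}), together with a singular weight along the central fibre, and exploit the Nakano semi-positivity of $(E,h)$ to keep the relevant curvature operator semi-positive. After rescaling we may take $B$ the unit ball and $t_0=0$; write $X_0=\pi^{-1}(0)$, let $t=(t_1,\dots,t_m)$ denote the coordinates pulled back to $X$, and put $r=\|t\|^2$. Since $\pi$ is proper and $B$ is Stein, $X$ is weakly pseudoconvex and carries a complete K\"ahler metric, and $X\setminus X_0$ admits a complete K\"ahler metric dominating $\omega$ (add $i\partial\bar\partial$ of a convex increasing function of $-\log r$ to a complete metric on $X$); this makes Lemma~\ref{thm: L2 estimate Nakano} and its twisted variant applicable on $X\setminus X_0$ with all $L^2$ norms taken with respect to $\omega$, and the final estimate will be seen to be independent of these auxiliary choices.

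First I would build smooth ``almost extensions''. Fix a small $\varepsilon>0$, a cut-off $\chi_\varepsilon$ on $B$ with $\chi_\varepsilon\equiv 1$ on $\{r<\varepsilon^2\}$, $\chi_\varepsilon\equiv 0$ on $\{r>4\varepsilon^2\}$ and $|d\chi_\varepsilon|=O(1/\varepsilon)$, and a smooth section $\widehat u$ of $K_X\otimes E$ defined near $X_0$, holomorphic to high order along $X_0$, whose restriction to $X_0$ is $u\wedge dt$. Set $u_\varepsilon:=\chi_\varepsilon(\pi)\,\widehat u$, a compactly supported smooth $(n,0)$-form valued in $E$; then $g_\varepsilon:=\bar\partial u_\varepsilon=\bar\partial\chi_\varepsilon\wedge\widehat u+\chi_\varepsilon\,\bar\partial\widehat u$ is supported in $\{\varepsilon^2<r<4\varepsilon^2\}$ up to a term that is $O(r)$-small by the choice of $\widehat u$. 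On $X\setminus X_0$ I would then solve $\bar\partial v_\varepsilon=g_\varepsilon$ for the metric $h\,r^{-m}$ on $K_X\otimes E$ using the Ohsawa--Takegoshi twist: with bounded functions $\eta=\eta(-\log r)$ and $\lambda=\lambda(-\log r)$ to be chosen, the twisted Bochner--Kodaira--Nakano inequality yields
\[
\int_{X\setminus X_0}\tfrac{1}{\eta+\lambda}\,|v_\varepsilon|^2\,r^{-m}\,dV_\omega\ \le\ \int_{X\setminus X_0}\big\langle \mathcal B_\varepsilon^{-1}g_\varepsilon,g_\varepsilon\big\rangle\,r^{-m}\,dV_\omega ,
\]
where $\mathcal B_\varepsilon=\big[\,i\eta\,\Theta_{E,h}+i\eta\,m\,\partial\bar\partial\log r-i\partial\bar\partial\eta+i\lambda^{-1}\partial\eta\wedge\bar\partial\eta\,,\ \Lambda_\omega\big]$. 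Here $i\Theta_{E,h}\ge 0$ by Nakano semi-positivity, $i\partial\bar\partial\log r\ge 0$ off $X_0$, and $\eta,\lambda$ are chosen (the Guan--Zhou ODE) so that the remaining terms combine to keep $\mathcal B_\varepsilon$ positive on $\mathrm{supp}\,g_\varepsilon$; a fibre integration over $\pi$ then bounds the right-hand side by $\big(\int_{\{\varepsilon^2<r<4\varepsilon^2\}}(\cdots)\,dV_{\mathbb C^m}(t)\big)\cdot\int_{X_0}|u|^2\,dV_{\omega_{X_0}}+o(1)$.

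The computational core is to choose the ODE so that, as $\varepsilon\to 0$, this $t$-integral converges to exactly $\mu(B)$ --- the family analogue of how the Suita constant emerges in the one-variable optimal extension --- so that $\int_{X\setminus X_0}|v_\varepsilon|^2 r^{-m}\,dV_\omega\le\mu(B)\int_{X_0}|u|^2\,dV_{\omega_{X_0}}+o_\varepsilon(1)$. Since $r^{-m}$ is non-integrable transversally to the codimension-$m$ set $X_0$, finiteness of the left-hand side forces $v_\varepsilon$ to vanish on $X_0$; hence $F_\varepsilon:=u_\varepsilon-v_\varepsilon$ is a holomorphic section of $K_X\otimes E$ with $F_\varepsilon|_{X_0}=u_\varepsilon|_{X_0}=u\wedge dt$. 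Dropping the (bounded) weight, and using $\int_X|u_\varepsilon|^2\,dV_\omega\to 0$ as the support of $u_\varepsilon$ shrinks, one gets $\int_X|F_\varepsilon|^2\,dV_\omega\le\mu(B)\int_{X_0}|u|^2\,dV_{\omega_{X_0}}+o_\varepsilon(1)$. Finally, by weak $L^2$-compactness and Montel's theorem I would extract a subsequence $F_\varepsilon\rightharpoonup F$ converging locally uniformly; then $F\in H^0(X,K_X\otimes E)$, $F|_{X_0}=u\wedge dt$, and $\int_X|F|^2\,dV_\omega\le\liminf_\varepsilon\int_X|F_\varepsilon|^2\,dV_\omega\le\mu(B)\int_{X_0}|u|^2\,dV_{\omega_{X_0}}$.

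I expect the main obstacle to be the construction and analysis of the twist functions $\eta,\lambda$: one must solve the governing ODE so that (i) $\mathcal B_\varepsilon$ stays semi-positive on $\mathrm{supp}\,g_\varepsilon$ using \emph{only} Nakano semi-positivity of $(E,h)$ (so that no room is lost in the constant), and (ii) the limiting $t$-integral equals \emph{exactly} $\mu(B)$; this forces a delicate bookkeeping of all the $o(1)$ errors (the $\bar\partial\widehat u$ term, the shrinking support of $u_\varepsilon$, and the auxiliary completeness corrections) in the limit $\varepsilon\to 0$. A secondary, more routine difficulty is to justify H\"ormander's estimate on the non-compact $X\setminus X_0$: equip it with a complete K\"ahler metric dominating $\omega$, note that Nakano semi-positivity is preserved, and remove the auxiliary metric by a monotone limit.
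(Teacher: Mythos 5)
You should first note how the paper itself handles this statement: it does not prove it at all, but quotes it as \cite[Theorem 1.1]{ZhouZhu192}, and the remark after the lemma explains that the stated form follows by specializing that theorem (take $R(t)=e^{-t}$, $\alpha_0=\alpha_1=0$, $\psi=m\log\|t-t_0\|^2$, and use \cite[Lemma 4.14]{GZh15d} and \cite[Remark 1.2]{ZhouZhu192}). Your proposal instead tries to reprove the cited theorem from scratch by the Guan--Zhou twisted Bochner--Kodaira method, which is indeed the strategy of the works being cited; so the route is reasonable, but what you have written is an outline of that strategy, not a proof. The decisive content of the statement is the \emph{optimal} constant $\mu(B)$, and everything that produces it is exactly what you defer: the explicit choice of the twist functions $\eta,\lambda$ solving the Guan--Zhou ODE, the verification that the twisted curvature operator $\mathcal B_\varepsilon$ is semi-positive on the support of $g_\varepsilon$ using only Nakano semi-positivity of $(E,h)$ together with $i\partial\bar\partial\log r\geq 0$ (note also that, in the standard twisted inequality, the term $i\lambda^{-1}\partial\eta\wedge\bar\partial\eta$ enters with the opposite sign to the one you wrote, so the positivity bookkeeping as stated would not close), and the limit computation showing the transversal $t$-integral converges to exactly $\mu(B)$ rather than to $\mu(B)$ times a constant $>1$. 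Calling these ``the main obstacle I expect'' concedes that the core of the theorem is missing; a crude application of Lemma~\ref{thm: L2 estimate Nakano} with the weight $r^{-m}$ alone cannot give the sharp constant, since $\pi^*\big(m\log\|t\|^2\big)$ contributes no strict positivity and the untwisted estimate loses a factor in exactly the place the theorem claims none is lost.

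There are also several secondary points you assert without justification and which require real work in the fibration setting: the construction of the smooth local extension $\widehat u$ with $\bar\partial\widehat u$ vanishing along $X_0$ and the control of the error term $\chi_\varepsilon\,\bar\partial\widehat u$ in the $\mathcal B_\varepsilon^{-1}$-pairing (the curvature lower bound degenerates precisely where this term lives); the regularity of the minimal solution $v_\varepsilon$ needed to conclude $v_\varepsilon|_{X_0}=0$ from non-integrability of $r^{-m}$; the removal of the auxiliary complete metric on $X\setminus X_0$ by a monotone limit while preserving the sharp constant; and the passage $\varepsilon\to 0$ with all $o(1)$ errors uniform. None of these is fatal in principle --- they are all handled in \cite{GZh15d} and \cite{ZhouZhu192} --- but as written your argument reproduces the skeleton of those papers while omitting the parts that make the estimate optimal. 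Either carry out the ODE construction and the limit analysis in detail, or do what the paper does: invoke \cite[Theorem 1.1]{ZhouZhu192} with the parameter choices indicated in the remark and explain why that specialization yields precisely the constant $\mu(B)$ and the restriction property $\widetilde u|_{X_{t_0}}=u\wedge dt$.
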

\begin{rem}

We take $R(t)=e^{-t}$, $\alpha_0=\alpha_1=0$, and $\psi=m\log\|t-t_0\|^2$ in \cite[Theorem 1.1]{ZhouZhu192}, and from \cite[Lemma 4.14]{GZh15d}, \cite[Remark 1.2]{ZhouZhu192}, we can get the precise form of Theorem \ref{thm: optimal L2 extension}.
\end{rem}

\section{Positivities of holomorphic vector bundles via $L^p$-conditions of $\bar\partial$}
The aim of this section is to prove Theorem \ref{thm:theta-nakano text_intr}-- Theorem \ref{thm: multiple coarse Lp: Griffiths positivity-intr.}.

\subsection{Characterizations of Nakano positivity in term of optimal $L^2$-estimate condition}
\begin{thm}[=Theorem \ref{thm:theta-nakano text_intr}]\label{thm:theta-nakano text}
Let $(X,\omega)$ be a  K\"{a}hler manifold of dimension $n$ with a K\" ahler metric $\omega$, which admits a positive Hermitian holomorphic line bundle,
$(E,h)$ be a smooth Hermitian vector bundle over $X$,
and $\theta\in C^0(X,\Lambda^{1,1}T^*_X\otimes End(E))$ such that $\theta^*=\theta$.
If for any $f\in\mathcal{C}^\infty_c(X,\wedge^{n,1}T^*_X\otimes E\otimes A)$ with $\bar\partial f=0$,
and any positive Hermitian line bundle $(A,h_A)$ on $X$ with $i\Theta_{A,h_A}\otimes Id_E+\theta>0$ on $\text{supp}f$,
there is $u\in L^2(X,\wedge^{n,0}T_X^*\otimes E\otimes A)$, satisfying $\bar\partial u=f$ and
$$\int_X|u|^2_{h\otimes h_A}dV_\omega\leq \int_X\langle B_{h_A,\theta}^{-1}f,f\rangle_{h\otimes h_A} dV_\omega,$$
provided that the right hand side is finite,
where $B_{h_A,\theta}=[i\Theta_{A,h_A}\otimes Id_E+\theta,\Lambda_\omega]$,
then $i\Theta_{E,h}\geq\theta$ in the sense of Nakano.
On the other hand, if in addition $X$ is assumed to have a complete K\"ahler metric,
the above condition is also necessary for that $i\Theta_{E,h}\geq\theta$ in the sense of Nakano.
In particular, if $(E,h)$ satisfies the optimal $L^2$-estimate condition, then $(E,h)$ is Nakano semi-positive.
\end{thm}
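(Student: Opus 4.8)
The plan is to prove the two implications of the characterization separately and to obtain the last assertion by specializing $\theta\equiv0$.

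\emph{Necessity.} Assume $X$ carries a complete Kähler metric and $i\Theta_{E,h}\ge\theta$ in the sense of Nakano. Fix a positive $(A,h_A)$ with $i\Theta_{A,h_A}\otimes Id_E+\theta>0$ on $\mathrm{supp}\,f$ and a $\bar\partial$-closed $f\in\mathcal{C}^\infty_c$ with finite right-hand side. By Remark~\ref{rem: Nakano tensor product} the curvature operator of $(E\otimes A,h\otimes h_A)$ splits as
\[
B:=[i\Theta_{E\otimes A,h\otimes h_A},\Lambda_\omega]=B_{h_A,\theta}+[i\Theta_{E,h}-\theta,\Lambda_\omega]\ \ge\ B_{h_A,\theta},
\]
the inequality because $[i\Theta_{E,h}-\theta,\Lambda_\omega]$ is semi-positive by Lemma~\ref{lem: Hermitian operator formula}; hence $\langle B^{-1}f,f\rangle\le\langle B_{h_A,\theta}^{-1}f,f\rangle$ wherever $f\ne0$ (on $\mathrm{supp}\,f$ both operators are genuinely positive-definite). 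After a harmless modification of $h_A$ away from $\mathrm{supp}\,f$ making $B\ge0$ on all of $X$ (which changes neither $f$ nor the integral), Demailly's $L^2$-estimate (Lemma~\ref{thm: L2 estimate Nakano}) applied to $(E\otimes A,h\otimes h_A)$ produces $u$ with $\bar\partial u=f$ and $\int_X|u|^2\le\int_X\langle B^{-1}f,f\rangle\le\int_X\langle B_{h_A,\theta}^{-1}f,f\rangle$, as required.

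\emph{Sufficiency.} I would argue by contradiction, first distilling the hypothesis. Given a positive $(A,h_A)$ and a $\bar\partial$-closed $g\in\mathcal{C}^\infty_c(X,\wedge^{n,1}T^*_X\otimes E\otimes A)$ with $i\Theta_{A,h_A}\otimes Id_E+\theta>0$ on $\mathrm{supp}\,g$ and finite right-hand side, taking $f:=g$ in the hypothesis yields $u$ with $\bar\partial u=g$, so $\|g\|^2=\langle u,\bar\partial^*g\rangle\le\|u\|\,\|\bar\partial^*g\|$ gives $\|g\|^4\le(\int_X\langle B_{h_A,\theta}^{-1}g,g\rangle)\,\|\bar\partial^*g\|^2$. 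Since $g$ is $\bar\partial$-closed of bidegree $(n,1)$, the Bochner--Kodaira--Nakano identity (Lemma~\ref{lem: BKN identity}) gives $\|\bar\partial^*g\|^2=\|D'^*g\|^2+\int_X\langle B_{\mathrm{tot}}\,g,g\rangle$ with $B_{\mathrm{tot}}=[i\Theta_{E\otimes A,h\otimes h_A},\Lambda_\omega]=B_{h_A,\theta}+[i\Theta_{E,h}-\theta,\Lambda_\omega]$, so altogether
\[
\|g\|^4\ \le\ \Bigl(\int_X\langle B_{h_A,\theta}^{-1}g,g\rangle\Bigr)\Bigl(\|D'^*g\|^2+\int_X\langle B_{\mathrm{tot}}\,g,g\rangle\Bigr)\qquad(\star)
\]
for every such $g$. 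Now suppose $i\Theta_{E,h}\ge\theta$ fails at $x_0$; by Lemma~\ref{lem: Hermitian operator formula} there is a unit $\alpha_0\in\wedge^{n,1}T^*_{x_0}X\otimes E_{x_0}$ with $\langle[i\Theta_{E,h}-\theta,\Lambda_\omega]\alpha_0,\alpha_0\rangle=-2c_0<0$, and I would construct a $g$ violating $(\star)$. Working in Kähler normal coordinates centered at $x_0$ (so $\omega$ osculates the flat metric to second order) on a chart trivializing $A$, choose a small ball $B_{\rho_0}$ on which $\langle[i\Theta_{E,h}-\theta,\Lambda_\omega](z)\alpha_0,\alpha_0\rangle<-c_0$, and $N\gg1$ with $N\,i\sum dz_j\wedge d\bar z_j\otimes Id_E+\theta>0$ on $B_{\rho_0}$; apply Proposition~\ref{prop: construct function 1} with $\psi:=N\sum_j|z_j|^2$ to get a positive metric on some $A^{\otimes m}$ equal to $e^{-\psi}$ on $B_{\rho_0}$, used as the auxiliary bundle. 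Let $\sigma$ be a holomorphic frame of $E\otimes A^{\otimes m}$ (so $|\sigma|^2=e^{-\Phi}$, $\Phi\approx N\sum|z_j|^2$) and $g_0:=\alpha_0\otimes\sigma$ the corresponding constant-coefficient $(n,1)$-form: in the flat Gaussian model $g_0$ is $\bar\partial$-closed, square-integrable and $D'^*$-coclosed (by the Kähler identity expressing $D'^*$ through $[\Lambda_\omega,\bar\partial]$ and holomorphicity of $\sigma$), so there $(\star)$ reduces to $\|g_0\|^4\le(\int\beta^{-1}|g_0|^2)(\int\gamma|g_0|^2)$ with $\beta(z)=\langle B_{h_m,\theta}(z)\alpha_0,\alpha_0\rangle$, $\gamma(z)=\langle B_{\mathrm{tot}}(z)\alpha_0,\alpha_0\rangle=\beta(z)-c_0'(z)$ and $c_0'>c_0$ on $B_{\rho_0}$ --- which is false, since $\gamma<\beta$. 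Transplanting $g_0$ to $X$ (cut off at a fixed small radius and corrected by an exponentially small solution of a $\bar\partial$-equation to recover compact support and $\bar\partial$-closedness) keeps $(\star)$ strictly violated, because the perturbation terms --- residual $D'^*$-energy of size $O(1/N)$, the cut-off and correction tails, and the oscillation $O(1/N)$ of $\beta$ over the support of $g$ (which enters the Cauchy--Schwarz bound $(\int\beta^{-1}|g|^2)(\int\beta|g|^2)\le(1+O(1/N^2))\|g\|^4$ only as a quadratically small variance) --- are all of lower order than the genuine defect $\int\beta^{-1}|g|^2\cdot\int c_0'|g|^2\gtrsim(c_0/N)\|g\|^4$. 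This contradiction proves $i\Theta_{E,h}\ge\theta$ in the Nakano sense.

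Finally, taking $\theta\equiv0$ we have $B_{h_A,0}=B_{A,h_A}=[i\Theta_{A,h_A}\otimes Id_E,\Lambda_\omega]$, and the side-condition $i\Theta_{A,h_A}\otimes Id_E+\theta>0$ on $\mathrm{supp}\,f$ is automatic because $(A,h_A)$ is positive; hence the hypothesis of the theorem with $\theta=0$ is exactly the optimal $L^2$-estimate condition of Definition~\ref{def:Lp estimate}, and the first part gives Nakano semi-positivity of $(E,h)$. The step I expect to be the main obstacle is precisely this quantitative localization in the sufficiency argument: one must order the choices (the ball $B_{\rho_0}$, the exponent $N$, the cut-off radius) correctly and bound each of the above error terms strictly below the linear-in-$c_0$ gap coming from the bad direction $\alpha_0$. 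What makes this feasible is that in the flat Gaussian model $(\star)$ is \emph{tight} for the constant-coefficient holomorphic form $g_0=\alpha_0\otimes\sigma$, so that any strict drop of $B_{\mathrm{tot}}$ below $B_{h_A,\theta}$ along $\alpha_0$ --- i.e. any Nakano-violation --- forces a strict violation of $(\star)$.
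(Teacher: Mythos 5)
Your overall strategy runs parallel to the paper's: extract an a priori inequality from the hypothesis via the Bochner--Kodaira--Nakano identity, then contradict it with test forms concentrated near a point where Nakano positivity fails, using Proposition \ref{prop: construct function 1} to globalize a strongly plurisubharmonic local weight. Two technical differences, one of which is a genuine gap. First, you keep the quadratic inequality $(\star)$ and must argue that Cauchy--Schwarz is nearly saturated (variance of $\beta$, etc.), whereas the paper substitutes $\alpha=B_{\psi,\theta}^{-1}f$ and cancels the common factor, arriving at the linear inequality \eqref{eq2}, $\langle\langle[i\Theta_{E,h}-\theta,\Lambda_\omega]\alpha,\alpha\rangle\rangle_\psi+\|D'^*\alpha\|_\psi^2\ge 0$; this removes all the bookkeeping you flag as the ``main obstacle''. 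Second, and this is the gap: your construction of the test form does not produce an admissible $f$. Cutting $g_0=\alpha_0\otimes\sigma$ off at a fixed radius destroys $\bar\partial$-closedness; correcting by solving a $\bar\partial$-equation for the error $\bar\partial\chi\wedge g_0$ restores closedness but destroys compact support, and truncating again destroys closedness. Compact support inside the small ball is not a cosmetic requirement here: the hypothesis is stated for $f\in\mathcal{C}^\infty_c$, and, more seriously, the positivity $i\Theta_{A^{\otimes m},h_m}\otimes Id_E+\theta>0$ and all your Gaussian-model estimates are only guaranteed on the ball where $h_m=e^{-N\sum|z_j|^2}$; off that ball $\theta$ may be very negative while the Proposition \ref{prop: construct function 1} metric has only a fixed amount of curvature, so the hypothesis need not apply to a form whose support leaks out. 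The paper's device resolves exactly this: take $f=\bar\partial\nu$ with $\nu$ a compactly supported potential (so $f$ is automatically $\bar\partial$-closed and compactly supported, at the price of $O(1)$-size terms on the annulus), and kill the annulus contribution by the weight -- in the paper by the sign-shifted weight $m(|z|^2-R^2/4)$ with $m\to\infty$, in your normalization by the exponential decay $e^{-N\rho^2}$ of the Gaussian relative to the central mass. Adopting this construction (ideally together with the linearizing substitution) closes the gap; your remaining error estimates, including the smallness of $\|D'^*g\|^2$, are of the right order.

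A secondary remark on your necessity argument: the ``harmless modification of $h_A$ away from $\mathrm{supp}\,f$'' is not obviously harmless. The solution produced by Lemma \ref{thm: L2 estimate Nakano} is then measured in the modified metric $h\otimes h_A'$, and to convert the estimate back to $h\otimes h_A$ you would need $h_A'\ge h_A$ (a smaller weight) while simultaneously increasing the curvature off $\mathrm{supp}\,f$; these requirements pull in opposite directions and you give no construction. The paper states the necessity as an immediate corollary of Lemma \ref{thm: L2 estimate Nakano} without dwelling on the fact that $i\Theta_{A,h_A}\otimes Id_E+\theta>0$ is only assumed on $\mathrm{supp}\,f$, so this is a shared rough edge rather than a defect unique to your write-up, but your proposed patch as stated does not repair it.
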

\begin{proof}
The second statement is a corollary of Theorem \ref{thm: L2 estimate Nakano}.
We now give the proof of the first statement.
We give the proof in the case that $\theta$ is $\mathcal C^1$,
and the general case follows the proof by an approximation argument.

To illustrate the main idea more clearly, we may assume that  there is a  smooth strictly plurisubharmonic function on $X$, which corresponds to the existence of a positive Hermitian trivial holomophic line bundle on $X$. For general case, the same proof goes through by replacing data related to  $e^{-\psi}$ by $h_A$, and using Proposition \ref{prop: construct function 1}.

Let $\psi$ be any smooth strictly plurisubharmonic function on $X$.
By assumption, we can solve the equation $\bar{\partial}u=f$  for any $\bar\partial$-closed $f\in\mathcal{C}^\infty_c(X,\wedge^{n,1}T^*_X\otimes E)$, with the estimate
$$\int_X|u|^2e^{-\psi}dV_\omega\leq \int_X\langle B_{\psi,\theta}^{-1} f,f\rangle e^{-\psi}dV_\omega,$$
where $B_{\psi,\theta}:=[i\partial\bar\partial \psi\otimes Id_E+\theta, \Lambda_\omega]$. For any $\alpha\in\mathcal{C}^\infty_c(X,\wedge^{n,1}T^*_X\otimes E)$, we have
\begin{align*}
|\langle\langle\alpha,f\rangle\rangle_{\psi}|&=|\langle\langle\alpha,\bar{\partial}u\rangle\rangle_{\psi}|\\
&=|\langle\langle {\bar{\partial}}^{*} \alpha,u\rangle\rangle_{\psi}|\\
&\leq||u||_{\psi}||{\bar{\partial}}^{*} \alpha||_{\psi},
\end{align*}
where $\bar\partial^*$ is the adjoint of $\bar\partial$ with respect to $\omega$, $e^{-\psi}h$.

From  Lemma \ref{lem: BKN identity}, we obtain
\begin{equation}\label{eq1}
\begin{split}
&|\langle\langle\alpha,f\rangle\rangle_{\psi}|^2\\
\leq& \int_ X\langle B_{\psi,\theta}^{-1}f,f\rangle e^{-\psi}dV_\omega\\
&\times\left(||D'\alpha||_\psi^2+||{D'}^*\alpha||_\psi^2+\langle\langle[i\Theta_{E,h}+
i\partial\bar\partial\psi\otimes Id_E,\Lambda_\omega]\alpha,\alpha\rangle\rangle_\psi-||{\bar{\partial}}\alpha||_\psi^2\right)\\
\leq& \int_ X\langle B_{\psi,\theta}^{-1}f,f\rangle e^{-\psi}dV_\omega\times\left(\langle\langle[i\Theta_{E,h}+
i\partial\bar\partial\psi\otimes Id_E,\Lambda_\omega]\alpha,\alpha\rangle\rangle_\psi+||{D'}^*\alpha||_\psi^2\right),
\end{split}
\end{equation}
where $D'$ is the $(1,0)$ part of the Chern connection on $E$ with respect to the metric $e^{-\psi}h$.

Let $\alpha=B_{\psi,\theta}^{-1}f$, i.e., $f=B_{\psi,\theta} \alpha.$
Then inequality (\ref{eq1}) becomes
\begin{equation*}
\begin{split}
&\left(\langle\langle B_{\psi,\theta} \alpha,\alpha\rangle\rangle_{\psi}\right)^2\\
\leq&\langle\langle B_{\psi,\theta} \alpha,\alpha\rangle\rangle_{\psi}
\left(\langle\langle[i\Theta_{E,h},\Lambda_\omega]\alpha,\alpha\rangle\rangle_\psi+
\langle\langle B_{\psi,0} \alpha,\alpha\rangle\rangle_{\psi}+||{D'}^*\alpha||_\psi^2\right).
\end{split}
\end{equation*}
Therefore, we can get
\begin{equation}\label{eq2}
\langle\langle[i\Theta_{E,h}-\theta,\Lambda_\omega]\alpha,\alpha\rangle\rangle_\psi+||{D'}^*\alpha||_\psi^2\geq0.
\end{equation}
We argue by contradiction.
Suppose that $i\Theta_{E,h}-\theta$ is not Nakano semi-positive on $X$.
By Lemma \ref{lem: Hermitian operator formula}, there is $x_0\in X$ and $\xi_0\in \Lambda^{n,1}T^*_{X,x_0}\otimes E_{x_0}$ such that $|\xi_0|=1$
and $\langle[i\Theta_{E,h}-\theta,\Lambda_{\omega}]\xi_0,\xi_0\rangle=-2c$ for some $c>0$.

Let $(U;z_1,z_2,\cdots,z_n)$ be a holomorphic coordinate on $X$ centered at $x_0$
such that $\omega=i\sum dz_j\wedge d\bar z_j+O(|z|^2)$,
and assume $\{e_1,e_2,\cdots,e_r\}$ is a holomorphic frame of $E$ on $U$.
Let $\xi=\sum \xi_{j\lambda}dz_1\wedge\cdots\wedge dz_n\wedge d\bar{z}_j\otimes e_\lambda$,
with constant coefficients such that $\xi(x_0)=\xi_0$.
We may assume
$$\langle[i\Theta_{E,h}-\theta,\Lambda_\omega]\xi,\xi\rangle<-c$$
on $U$.
Choose $R>0$ such that $B(0,R):=\{z:|z|<R\}\subset U$, and write $B(0,R)$ as $B_R$.

Choose $\chi\in \mathcal{C}^\infty_c(B_R)$, satisfying $\chi(z)=1$ for $z\in B_{R/2}$.
Let $f=\bar{\partial}\nu$ with
$$\nu(z)=(-1)^n \sum_{j,\lambda}\xi_{j\lambda}\bar{z}_j\chi(z)dz_1\wedge\cdots\wedge dz_n\otimes e_\lambda.$$
Then $$f(z)=\sum \xi_{j\lambda}dz_1\wedge\cdots\wedge dz_n\wedge d\bar{z}_j\otimes e_\lambda$$
for $z\in B_{R/2}$.
From Proposition \ref{prop: construct function 1}, we can construct a  smooth strictly plurisubharmonic function $\psi$ on $X$, such that
$\psi|_{B_R}(z)=|z|^2-\frac{R^2}{4}$.
For any integer $m>0$, set $$\psi_m(z)=m\psi(z).$$
As before, set
$\alpha_m=B_{\psi_m,\theta}^{-1}f=\frac{1}{m}B_{\psi,\theta/m}^{-1}f$.
By \cite[Chapter VII, Theorem 1.1]{Dem}, we have
$${D'}^*B_{\psi,0}^{-1}f(0)=0.$$
So after shrinking $R$, we can get $|{D'}^*\alpha_m(z)|\leq\frac{\sqrt c}{2m}$ for $z\in B_{R/2}$ and any $m$.
Since $f$ has compact support in $B_R$,
there is a constant $C>0$, such that $|\langle[i\Theta_{E,h}-\theta,\Lambda_\omega]\alpha_m,\alpha_m\rangle|\leq \frac{C^2}{m^2}$ and $|{D'}^*\alpha_m|\leq \frac{C}{m}$
hold for any $m>0$.

We now estimate both terms in \eqref{eq2} with $\alpha$ and $\psi$
replaced by $\alpha_m$ and $\psi_m$ defined as above.

\begin{equation}\label{eq4}
\begin{split}
&m^2\left(\langle\langle[i\Theta_{E,h}-\theta,\Lambda_\omega]\alpha_m,\alpha_m\rangle\rangle_{\psi_m}+||{D'}^*\alpha_m||_{\psi_m}^2\right)\\
=&m^2\left(\int_{B_{R/2}}\langle[i\Theta_{E,h}-\theta,\Lambda_\omega]\alpha_m,\alpha_m\rangle e^{-\psi_m}dV_\omega+
\int_{B_{R/2}}| {D'}^*\alpha_m|^2e^{-\psi_m}dV_\omega\right)\\
&+m^2\left(\int_{ B_R\setminus B_{R/2}} \langle[i\Theta_{E,h}-\theta,\Lambda_\omega]\alpha_m,\alpha_m\rangle e^{-\psi_m}dV_\omega
+\int_{ B_R\setminus B_{R/2}} | {D'}^*\alpha_m|^2e^{-\psi_m}dV_\omega\right)\\
\leq& -\frac{3c}{4}\int_{B_{R/2}}e^{-\psi_m}dV_\omega+2C^2\int_{ B_R\setminus B_{R/2}} e^{-\psi_m}dV_\omega.
\end{split}
\end{equation}
Since $\lim_{m\rightarrow+\infty}\psi_m(z)=+\infty$
for $z\in  B_R\setminus \overline{B}_{R/2}$, and $\psi_m(z)\leq 0$ for $z\in B_{R/2}$ and all $m$.
Therefore, we obtain from \eqref{eq4} that
$$\langle\langle[i\Theta_{E,h}-\theta,\Lambda_\omega]\alpha_m,\alpha_m\rangle\rangle_{\psi_m}+||{D'}^*\alpha_m||_{\psi_m}^2<0$$ for $m>>1$,
which contradicts to the inequality (\ref{eq2}).
\end{proof}

\begin{rem}
With the discussion in \S \ref{subsec:infinite rank v.b.}, the above proof holds for vector bundles of infinite rank.
\end{rem}

\subsection{Griffiths positivity in terms of multiple coarse $L^p$-estimate condition}
\begin{thm}[=Theorem \ref{thm:coarse estimate text-intr}]\label{thm:coarse estimate text}
Let $(X,\omega)$ be a K\"{a}hler manifold, which admits a positive Hermitian holomorphic line bundle,  and $(E,h)$ be a holomorphic vector bundle over $X$ with a continuous Hermitian metric $h$.
If $(E,h)$ satisfies the multiple coarse $L^p$-estimate condition for some $p>1$,  then $(E,h)$ is Griffiths semi-positive.
\end{thm}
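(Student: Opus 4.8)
The plan is to show that the multiple coarse $L^p$-estimate condition forces $(E,h)$ to satisfy, locally, the multiple coarse $L^p$-extension condition, and then to invoke Theorem \ref{thm: multiple coarse Lp: Griffiths positivity-intr.}. Since Griffiths semi-positivity is a pointwise condition, it suffices to fix $z_0\in X$, pass to a coordinate ball $D\Subset$ a chart centred at $z_0$ over which $E$ is trivialized by a holomorphic frame, and verify that $(E|_D,h|_D)$ satisfies the multiple coarse $L^p$-extension condition for every $z\in D$ and every $a\in E_z$ with $|a|_h=1$. Since a continuous Hermitian metric is in particular a singular Finsler metric with $|s|_{h^*}$ continuous (hence upper semi-continuous), Theorem \ref{thm: multiple coarse Lp: Griffiths positivity-intr.} then yields Griffiths semi-positivity on $D$, hence at $z_0$, and therefore on all of $X$ since $z_0$ was arbitrary.

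To produce the extension data at $z\in D$, given $a$ and $m\geq 1$, let $\tilde s_m$ be the constant section (in the fixed frame) with $\tilde s_m(z)=a^{\otimes m}$, and --- this is the key device --- choose a cut-off $\chi_m$ with $\chi_m\equiv 1$ near $z$ and $\operatorname{supp}\chi_m\subset B(z,1/m)$. Then $|\bar\partial\chi_m|\lesssim m$ on an annulus of volume $\lesssim m^{-2n}$, while by continuity of $h$ one has $|\tilde s_m|_{h^{\otimes m}}\leq e^{o(m)}$ on $B(z,1/m)$, uniformly in $z$ and $a$. Choose a positive Hermitian line bundle $(A,h_A)$ whose weight near $z$ is $\psi_0+2n\log(|w-z|^2+\varepsilon)$, where $\psi_0$ is a fixed strictly plurisubharmonic function, realized in general as a metric on a power of the given positive line bundle via Proposition \ref{prop: construct function 1} (if $X$ already carries a strictly plurisubharmonic function one may simply take $A$ trivial). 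Applying the multiple coarse $L^p$-estimate condition, solve $\bar\partial u_{m,\varepsilon}=g_m:=\bar\partial(\chi_m\tilde s_m\otimes dz\otimes e_A)$ with the prescribed estimate. Because the curvature of $A$ is bounded below by a fixed $\kappa_0>0$ near $\bar D$ and the weight contributes at most a polynomial factor on $\operatorname{supp}\bar\partial\chi_m$ (where $|w-z|\gtrsim 1/m$), the size and volume bounds above give that $C_m\int_X\langle B_{A,h_A}^{-1}g_m,g_m\rangle^{p/2}\,dV_\omega\leq C_m\,e^{o(m)}$, uniformly in $\varepsilon$; hence the same bound holds for $\int_X|u_{m,\varepsilon}|^p_{h^{\otimes m}\otimes h_A}\,dV_\omega$.

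Next, let $\varepsilon\to 0$ and extract a weak-$L^p_{\mathrm{loc}}$ limit $u_m$; this is the one place the hypothesis $p>1$ is used, through the reflexivity of $L^p$. The limit solves $\bar\partial u_m=g_m$, is holomorphic near $z$, and inherits the estimate with the singular weight $\psi_0+2n\log|w-z|^2$, and since $|w-z|^{-4n}$ is non-integrable near $z$ this forces $u_m(z)=0$. Putting $f_m:=\chi_m\tilde s_m\otimes dz\otimes e_A-u_m$, a global holomorphic section, and writing $f_m|_D=\hat f_m\otimes dz\otimes e_A$, we get $\hat f_m\in H^0(D,E^{\otimes m})$ with $\hat f_m(z)=a^{\otimes m}$; and since $\psi_0+2n\log|w-z|^2$ is bounded above on $D$ by a constant independent of $m$, while $\int_{\operatorname{supp}\chi_m}|\tilde s_m|^p_{h^{\otimes m}}\,dV\leq e^{o(m)}$ and $\int_X|u_m|^p_{h^{\otimes m}\otimes h_A}\,dV_\omega\leq C_m e^{o(m)}$, we obtain $\int_D|\hat f_m|^p_{h^{\otimes m}}\,dV\leq C_m'$ with $\tfrac1m\log C_m'\to 0$. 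All constants being uniform over $z\in D$ and over unit $a\in E_z$ by compactness, the multiple coarse $L^p$-extension condition holds on $D$, which completes the argument.

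The step I expect to be the main obstacle is exactly the subexponential growth $\tfrac1m\log C_m'\to 0$ of the extension constants. Gluing a fixed holomorphic section over a fixed ball produces a factor of order $(\sup_D|a|_h)^{pm}$, which is genuinely exponential; shrinking $\operatorname{supp}\chi_m$ to scale $1/m$ trades this for a mere polynomial loss from $|\bar\partial\chi_m|\sim m$ (offset by the volume gain $\sim m^{-2n}$), and the $m$-independent weight is what forces the vanishing at $z$. Making this precise --- together with the uniformity of $(A,h_A)$ and of all estimates in $\varepsilon$ and in $z$, which needs the power of the given positive line bundle supplied by Proposition \ref{prop: construct function 1} to be taken independent of $\varepsilon$ --- is the technical heart, and is where one modifies the methods of \cite{HI,DNW1}.
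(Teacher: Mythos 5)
Your proposal is correct and follows essentially the same route as the paper's proof: cut off near the chosen point at scale $1/m$ (the paper introduces the scale as $\epsilon$ and only at the end sets $\epsilon=1/m$, which is cosmetically different but identical in substance), solve $\bar\partial$ with the multiple coarse $L^p$-estimate against a weight with a point singularity of order $n\log(|z-w|^2+\delta^2)$ (your $2n\log$ is harmlessly stronger), pass to the limit $\delta\to 0$ using reflexivity of $L^p$ for $p>1$, read off $u_m(w)=0$ from non-integrability, estimate the constants through the modulus of continuity of $\log|a|_{h(z)}$ to get $\frac1m\log C_m'\to 0$, and conclude via the multiple coarse $L^p$-extension criterion (Theorem~\ref{thm: multiple coarse Lp: Griffiths positivity-intr.} / \cite[Theorem 1.2]{DWZZ1}). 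The key device you flag as the technical heart --- shrinking the support of the cut-off to scale $1/m$ so the exponential factor $(\sup|a|_h)^{pm}$ becomes $e^{mpO_{1/m}}=e^{o(m)}$ --- is exactly the paper's device.
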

\begin{proof}
We prove the theorem by modifying the idea in \cite{HI,DNW1}.
For the same reason as in the proof of Theorem \ref{thm:theta-nakano text},
we may assume that there is a strictly smooth plurisubharmonic function on $X$.

We will show that $(E,h)$ satisfies the multiple coarse $L^p$-extension condition.
We assume that $D', z=(z_1, \cdots, z_n)$ is an arbitrary coordinate chart on $X$,
and let $D$ be an arbitrary relatively compact subset of $D'$.
We assume that $E|_{D'}=D'\times\mc^r$ is trivial and $\omega|_D\leq C\  i/2\sum_{j=1}^{n}dz_j\wedge d\bar{z}_j$
with some $C>0$.

Fix  an integer $m>0$, $w\in D$ (we identify $w$ with its coordinate $z(w)$) and $a\in E_w$ with $|a|_h=1$.
We will construct $f \in  H^0(X,E^{\otimes m})$ such that $f(w) = a^{\otimes m}$ and
$$ \int_{X} |f|^p _{h^{\otimes m}}dV_\omega\leq C'_m ,$$
where $C'_m$ are uniform constants independent of $w$
that satisfy
$$\lim_{m\rightarrow\infty}\frac{\log C'_m}{m}=0.$$

Let $\chi = \chi(t)$ be a smooth function on $\mathbb{R}$, such that
\begin{itemize}
	\item $\chi(t) = 1$ for $t \leq 1/4$,
	\item $\chi(t) = 0$ for $t \geq 1$, and
	\item $|\chi'(t)|\leq 2$ on $\mathbb{R}$.
\end{itemize}
Viewing $a$ as a constant section of $E|_D$,
we define an $E^{\otimes m}$-valued  $(n,1)$-form $\alpha_\epsilon$ by
\begin{align*}
\alpha_\epsilon &:= \dbar\chi\left({|z-w|^2 \over \epsilon^2 }\right)dz \otimes a^{\otimes m}\\
&= \chi'\left(\frac{|z-w|^2}{\epsilon^2} \right) \sum_j \frac{z_j-w_j}{\epsilon^2} d\bar{z}_j\wedge dz\otimes a^{\otimes m},
\end{align*}
where $dz=dz_1\wedge\cdots\wedge dz_n$,
and  from Proposition \ref{prop: construct function 1}, we can choose a smooth strictly plurisubharmonic  function $\psi_\delta$ on $X$ such that
\begin{equation*}
\psi_{\delta}|_D = |z|^2  + n \log(|z-w|^2 + \delta^2),
\end{equation*}
where $0<\epsilon, \delta\ll 1$ are parameters.
From the multiple coarse $L^p$-estimate condition,
we obtain a smooth section $u_{\epsilon, \delta}$ of $E^{\otimes m}$-valued $(n,0)$-form on $ X$ such that $\dbar u_{\epsilon,\delta} = \alpha_\epsilon$ and
\begin{equation}
\int_{X} |u_{\epsilon,\delta}|^p _{h^{\otimes m}}e^{-\psi_\delta}dV_\omega\leq C_m\int_{X}\langle B^{-1}_{\psi_{\delta} } \alpha_\epsilon,\alpha_\epsilon\rangle^{\frac{p}{2}} e^{-\psi_\delta}dV_\omega.\label{eqn:dbar-est}
\end{equation}

On $D$, we have the following estimate:
\begin{equation*}
\begin{split}
\langle B^{-1}_{\psi_{\delta} } \alpha_\epsilon,\alpha_\epsilon\rangle
& = | \chi' (\frac{|z-w|^2}{\epsilon^2} ) |^2 \cdot \frac{1}{\epsilon^4}
\langle B^{-1}_{\psi_{\delta} }\sum_j (z_j-w_j) d\overline{z}_j\wedge dz\otimes a^{\otimes m},\sum_j (z_j-w_j) d\overline{z}_j\wedge dz\otimes a^{\otimes m}\rangle\\
& \leq C_1| \chi' (\frac{|z-w|^2}{\epsilon^2} ) |^2 \cdot \frac{1}{\epsilon^4}|z-w|^2|a|_{h(z)}^{2m},
\end{split}
\end{equation*}
where $C_1$ depends only on $\omega$.
Note that
$$\text{supp}\ \chi'\left(\frac{|z-w|^2}{\epsilon^2} \right)\subset\{1/4 \leq |z-w|^2 /\epsilon^2 \leq 1 \}$$
and $\psi_{\delta}\geq  2n\log|z-w|$, we have
\begin{equation}\label{ineq t3}
\begin{split}
(\text{RHS of (\ref{eqn:dbar-est})}) &\leq C_m C_1^{\frac{p}{2}}\int_{\{\epsilon^2 / 4 \leq |z-w|^2 \leq \epsilon^2\}}| \chi' (\frac{|z-w|^2}{\epsilon^2} )|^p \frac{1}{\epsilon^{2p}}|z-w|^p e^{- \psi_{\delta}}|a|_{h(z)}^{mp}dV_\omega\\
& \leq C_m C_1^{\frac{p}{2}}\frac{2^p}{\epsilon^{2p}} \int_{\{\epsilon^2 / 4 \leq |z-w|^2 \leq \epsilon^2\}} |z-w|^pe^{- \psi_{\delta}}|a|_{h(z)}^{mp}dV_\omega\\
&\leq C_m C_1^{\frac{p}{2}}\frac{2^p}{\epsilon^{2p}}\int_{\{\epsilon^2 / 4 \leq |z-w|^2 \leq \epsilon^2\}} \epsilon^p
\sup_{B(w,\epsilon)}|a|_{h(z)}^{mp} e^{-2n\log|z-w|}dV_\omega\\
&\leq C_2C_m \frac{\sup_{B(w,\epsilon)}|a|_{h(z)}^{mp}}{\epsilon^p},
\end{split}
\end{equation}
where $C_2=2^{p+2n}C_1^{\frac{p}{2}}C^n\mu(B_1)$ and $\mu(B_1)$ is the volume of the unit ball $B_1$ with respect to the Lebesgue measure.

To summarize, we have obtained a smooth section $u_{\epsilon,\delta}$ of $E^{\otimes m}$-valued $(n,0)$-form on $ X$ such that
\begin{itemize}
	\item $\dbar u_{\epsilon,\delta} = \alpha_\epsilon$, and
	\item the following estimate holds:
	\begin{equation}
	\int_{D} |u_{\epsilon,\delta}|_{h^{\otimes m}}^p e^{-\psi_{\delta}}dV_\omega \leq C_2C_m\frac{\sup_{B(w,\epsilon)}|a|_{h(z)}^{mp}} {\epsilon^p}.\label{eqn:matome}
	\end{equation}
\end{itemize}

Note that the weight function $\psi_{\delta}$ is decreasing when $\delta \searrow 0$,  $e^{-\psi_{\delta}}$ is increasing when $\delta \searrow 0$.
Fix $\delta_0>0$. Then, for $\delta < \delta_0$,   we have that
$$\int_{D} |u_{\epsilon,\delta}|_{h^{\otimes m}}^p e^{-\psi_{\delta_0}}dV_\omega \leq \int_{D} |u_{\epsilon,\delta}|_{h^{\otimes m}}^p e^{- \psi_{\delta}}dV_\omega \leq C_2C_m\frac{\sup_{B(w,\epsilon)}|a|_{h(z)}^{mp}} {\epsilon^p}.$$
Thus $\{u_{\epsilon,\delta}\}_{\delta < \delta_0}$ forms a bounded sequence in
$L^p(X, K_X\otimes E^{\otimes m}, e^{-\psi_{\delta_0}})$.
Note that $p>1$, we can choose a sequence $\{u_{\epsilon,\delta^{(k)}}\}_k$ in $L^p(X, e^{-\delta_0})$
which weakly converges to some $u_\epsilon\in L^p(X, K_X\otimes E^{\otimes m}, e^{-\psi_{\delta_0}})$, satisfying
$$\int_{D} |u_\epsilon|_{h^{\otimes m}}^p e^{-\psi_{\delta_0}} dV_\omega\leq  C_2C_m\frac{\sup_{B(w,\epsilon)}|a|_{h(z)}^{mp}} {\epsilon^p}. $$
Repeating this argument for a sequence $\{\delta_j\}$ decreasing to $0$,
by diagonal argument,
we can select a sequence $\{u_{\epsilon,\delta^k}\}_k$ which weakly converges to $u_\epsilon$
in $L^p(X, K_X\otimes E^{\otimes m}, e^{-\psi_{\delta_j}})$ with $u_\epsilon$ satisfying
$$\int_{D} |u_\epsilon|_{h^{\otimes m}}^p e^{-\psi_{\delta_j}}dV_\omega \leq C_2C_m\frac{\sup_{B(w,\epsilon)}|a|_{h(z)}^{mp}} {\epsilon^p}$$
for all $j$.
By the monotone convergence theorem,
$$\int_{  D} |u_\epsilon|_{h^{\otimes m}}^p e^{- \psi_{0}}dV_\omega \leq  C_2C_m\frac{\sup_{B(w,\epsilon)}|a|_{h(z)}^{mp}} {\epsilon^p}. $$
Since $\bar\partial$ is weakly continuous, we also have $\dbar u_\epsilon = \alpha_\epsilon.$

Since $\frac{1}{|z-w|^{2n}}$ is not integrable near $w$, $u_\epsilon(w)$ must be $0$.
Let
$$f_\epsilon:= \chi(|z-w|^2/\epsilon^2)dz\otimes a^{\otimes m} - u_\epsilon.$$
Then $f_\epsilon\in H^0(X,\wedge^{(n,0)}T^*_X\otimes E^{\otimes m})$,
$f_\epsilon(0) = dz \otimes a^{\otimes m}$ and
\begin{equation}\label{eqn:4}
\begin{split}
\int_{D} |f_\epsilon|_{h^{\otimes m}}^pdV_\omega
&\leq \left(\left(\int_{D} \left|\chi(|z-w|^2/\epsilon^2)dz\otimes a^{\otimes m}\right|_{h^{\otimes m}}^pdV_\omega\right)^{1/p}
+ \left(\int_{D} |u_\epsilon|_{h^{\otimes m}}^pdV_\omega\right)^{1/p}\right)^p\\
&\leq 2^p\left(\int_{D} \left|\chi(|z-w|^2/\epsilon^2)dz\otimes a^{\otimes m}\right|_{h^{\otimes m}}^pdV_\omega +\int_{D} |u_\epsilon|_{h^{\otimes m}}^pdV_\omega \right)
\end{split}
\end{equation}

Since $\chi \leq 1$ and the support of $\chi(|z-w|^2 / \epsilon^2) $ is contained in $\{|z-w|^2 \leq \epsilon^2 \}$ and $0<\epsilon\leq1$, we have
$$\int_{D} \left|\chi(|z-w|^2/\epsilon^2)dz\otimes a^{\otimes m}\right|_{h^{\otimes m}}^pdV_\omega\leq C^n\mu( B_1)\sup_{B(w,\epsilon)}|a|_{h(z)}^{mp}.$$
We also have
\begin{align*}
\int_{D}|u_\epsilon|_{h^{\otimes m}}^pdV_\omega &\leq \sup_{z \in D} e^{\psi_0(z)} \cdot \int_{D}
|u_\epsilon|_{h^{\otimes m}}^p e^{-\psi_0 }dV_\omega \\
&\leq\sup_{z \in D}e^{\psi_0(z)}  \cdot C_2C_m\frac{\sup_{B(w,\epsilon)}|a|_{h(z)}^{mp}}{\epsilon^p} \\
&\leq C_3C_m\frac{\sup_{B(w,\epsilon)}|a|_{h(z)}^{mp} }{\epsilon^p},
\end{align*}
where $C_3$ is a constant depends only on $D$. We may assume $C_m\geq1$.
Combining these estimates with \eqref{eqn:4}, we obtain that
\begin{equation*}
\int_{D} |f_\epsilon|_{h^{\otimes m}}^p dV_\omega \leq C_4 C_m\frac{\sup_{B(w,\epsilon)}|a|_{h(z)}^{mp}}{\epsilon^p},
\end{equation*}
where $C_4$ is a constant independent of $m$ and $w$.

Let
$$O_\epsilon=\sup\limits_{z,w\in D, |z-w|\leq\epsilon} \left|\log |a|_{h(z)}-\log |a|_{h(w)}\right|.$$
By the uniform continuity of $\log |a|_{h(z)}$ on $D$, $O_\epsilon$ is finite and goes to 0 as $\epsilon\ra 0$.
Let $\epsilon := 1/m$. We have  $\left|mp\log |a|_{h(z)} - mp\log |a|_{h(w)}\right| \leq mp O_{1/m}$ for $|z-w| \leq 1/m$. Then
\begin{align}
\int_{D} |f_{1/m}|^p e^{-m\phi}dV_\omega
&\leq C_4C_m m^{p} e^{mp\log |a|_{h(w)}+mp O_{1/m}}\nonumber\\
&= C_4C_mm^{p} e^{mp O_{1/m}}.\label{eqn:last}
\end{align}
Let $C'_m=C_4C_mm^{p} e^{mp O_{1/m}}$, we have
$$\frac{\log C'_m}{m}=\frac{\log (C''C_mm^{p})}{m}+ pO_{1/m}\to 0.$$
Considering $f_{1/m}/dz$, we see that $(E,h)$ satisfies the multiple coarse $L^p$-extension condition on $D$,
and hence $(E,h)$ is Griffiths semi-positive on $D$ by \cite[Theorem 1.2]{DWZZ1}.
Since $D$ is arbitrary, $(E,h)$ is Griffiths semi-positive on $X$.
\end{proof}

\subsection{Griffiths positivity in terms of optimal $L^p$-extension condition}

\begin{thm}[=Theorem \ref{thm: optimal Lp extension : Griffiths positive-intr}]\label{thm: optimal Lp estimate : Griffiths positive}
Let $E$ be a holomorphic vector bundle over a domain $D\subset\mc^n$,
and $h$ be a singular Finsler metric on $E$,
 such that $|s|_{h^*}$ is upper semi-continuous
for any local holomorphic section $s$ of $E^*$.
If $(E,h)$ satisfies the optimal $L^p$-extension condition for some $p>0$,
then $(E,h)$ is Griffiths semi-positive.
\end{thm}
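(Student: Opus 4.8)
The plan is to prove that $\log|s|_{h^*}$ is plurisubharmonic for every local holomorphic section $s$ of $E^*$; by Definition~\ref{def:positivity of finsler} this is exactly the statement that $(E,h)$ is Griffiths semi-positive. So fix such an $s$, defined on an open set $U\subset D$. Since $|s|_{h^*}$ is upper semi-continuous by hypothesis, so is $\log|s|_{h^*}\colon U\to[-\infty,+\infty)$, and it remains only to verify a sub-mean value property. Here I would invoke the lemma of \cite{DNW1} characterizing plurisubharmonic functions through averages over holomorphic cylinders: it suffices to prove
\[
\log|s|_{h^*}(z_0)\ \le\ \frac{1}{\mu(P)}\int_{z_0+P}\log|s|_{h^*}
\]
for every $z_0\in U$ and every holomorphic cylinder $P$ with $\overline{z_0+P}\subset U$.

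To prove this, fix $z_0$ and $P$ and let $a\in E_{z_0}$ with $|a|_h=1$. The optimal $L^p$-extension condition provides $f\in H^0(z_0+P,E)$ with $f(z_0)=a$ and $\frac{1}{\mu(P)}\int_{z_0+P}|f|^p_h\le 1$. Set $g:=s(f)\in\mathcal{O}(z_0+P)$; then $g(z_0)=s(a)$ and, by the definition of the dual Finsler metric, $|g|\le|s|_{h^*}\,|f|_h$ pointwise. We may assume $g\not\equiv0$ (otherwise $s(a)=0$ and the inequality below for this $a$ is trivial), so $\log|g|$ is plurisubharmonic on $z_0+P$. Since $z_0+P$ is, up to a unitary change of coordinates, a product of a disc and a ball centered at $z_0$, the sub-mean value property of plurisubharmonic functions holds at its center (a routine exhaustion by slightly smaller cylinders disposes of the open boundary); combining it with $|g|\le|s|_{h^*}\,|f|_h$,
\begin{align*}
\log|s(a)|=\log|g(z_0)|&\le\frac{1}{\mu(P)}\int_{z_0+P}\log|g|\\
&\le\frac{1}{\mu(P)}\int_{z_0+P}\log|s|_{h^*}+\frac{1}{\mu(P)}\int_{z_0+P}\log|f|_h.
\end{align*}
Because $\log x\le x$ for $x>0$, the function $\log|f|_h$ is bounded above by the integrable function $\tfrac1p|f|^p_h$, so Jensen's inequality applies, and with the sharp $L^p$-estimate it gives
\begin{align*}
\frac{1}{\mu(P)}\int_{z_0+P}\log|f|_h&=\frac1p\cdot\frac{1}{\mu(P)}\int_{z_0+P}\log|f|^p_h\\
&\le\frac1p\log\!\left(\frac{1}{\mu(P)}\int_{z_0+P}|f|^p_h\right)\le0.
\end{align*}
Hence $\log|s(a)|\le\frac{1}{\mu(P)}\int_{z_0+P}\log|s|_{h^*}$ for every unit $a\in E_{z_0}$, and taking the supremum over such $a$ while using $\sup_{|a|_h=1}|s(a)|=|s|_{h^*}(z_0)$ yields the displayed inequality.

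This is essentially the logarithm-and-Jensen device of Guan--Zhou \cite{GZh15d} and Hacon--Popa--Schnell \cite{HPS16}, fed the $L^p$-form of the sharp estimate; the role of the lemma from \cite{DNW1} is to turn the cylinder-average inequalities one naturally produces into genuine plurisubharmonicity. I do not anticipate a deep obstacle: the substance is the synthesis, and the care needed lies in the bookkeeping --- matching the derived inequality exactly to the hypotheses of the \cite{DNW1} lemma, and attending to the upper semi-continuity, the use of the sub-mean value property over an open cylinder (the boundary-exhaustion point above), and the degenerate directions of a singular Finsler metric, as well as, as in the other theorems of this section, checking that the argument goes through verbatim for bundles of infinite rank. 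Once $\log|s|_{h^*}$ is plurisubharmonic for every local holomorphic section $s$ of $E^*$, $(E,h)$ is Griffiths semi-positive by Definition~\ref{def:positivity of finsler}, which completes the proof.
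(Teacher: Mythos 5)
Your proposal is correct and follows essentially the same route as the paper's proof: extend a unit vector $a$ by the optimal $L^p$-extension condition, pair with the dual section to get $|s|_{h^*}\ge|\langle s,f\rangle|/|f|_h$, use plurisubharmonicity of $\log|\langle s,f\rangle|$ together with Jensen's inequality on the normalized $L^p$ bound, and conclude via the cylinder-average criterion of \cite[Lemma 3.1]{DNW1}. The only cosmetic differences are that the paper picks an extremal $a$ realizing $|s|_{h^*}(z)$ at the outset instead of taking a supremum at the end, and cites the same \cite{DNW1} lemma for the sub-mean value step on the cylinder rather than arguing it directly.
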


\begin{proof}
Let $u$ be a holomorphic section of $E^*$ over $D$.
Let $z\in D$ and $P$ be any holomorphic cylinder such that $z+P\subset D$.
Take $a\in E_z$ such that $|a|_h=1$ and  $|u|_{h^*}(z)=|\langle u(z),a\rangle|$.
Since $(E,h)$ satisfies the optimal $L^p$-extension condition,
there is a holomorphic section $f$ of $E$ on $z+P$,
such that $f(z)=a$ and satisfies the estimate
	\begin{align}\label{eqn: optimal Lp extension a}
	\frac{1}{\mu(P)}\int_{z+P}|f|^p_h\leq 1.
	\end{align}
	Note that $|u|_{h^*}\geq |\langle u,f\rangle|/|f|_h$ on $z+P$,
it follows that $$\log|u|_{h^*}\geq \log|\langle u,f\rangle|-\log|f|_h.$$
	Taking integration,  we get that
	\begin{align*}
	p\left(\frac{1}{\mu(P)}\int_{z+P}\log |u|_{h^*}\right)
    &\geq p\left(\frac{1}{\mu(P)}\int_{z+P}\log|\langle u,f\rangle|\right)- \frac{1}{\mu(P)}\int_{z+P}\log|f|^p_h\\
	&\geq p\left(\frac{1}{\mu(P)}\int_{z+P}\log|\langle u,f\rangle|\right)-\log\left(\frac{1}{\mu(P)}\int_{z+P}|f|^p_h\right)\\
	&\geq p\log|\langle u(z),f(z)\rangle|\\
	&=p\log|\langle u(z), a\rangle|=p\log|u(z)|_{h^*},
	\end{align*}
	where the second inequality follows from Jensen's inequality and \eqref{eqn: optimal Lp extension a},
and the third inequality follows from the fact that $\log|\langle u,f\rangle|$ is a plurisubharmonic function,
and from \cite[Lemma 3.1]{DNW1}. Dividing by $p$, we obtain that
	\begin{align*}
	\log|u(z)|_{h^*}\leq \frac{1}{\mu(P)}\int_{z+P}\log |u|_{h^*}.
	\end{align*}
Again from \cite[Lemma 3.1]{DNW1}, we see that $\log |u|_{h^*}$ is plurisubharmonic on $D$.
\end{proof}

\subsection{Griffiths positivity in terms of multiple coarse $L^p$-extension condition}

 The following theorem was originally given in \cite[Theorem 6.4]{DWZZ1}.
 In the present paper, we give a new proof of it based on Guan-Zhou's idea \cite{GZh15d} about
 connecting optimal $L^2$-extension condition to Berndtsson's plurisubharmonic variation of relative Bergman kernels \cite{Bob06}.

\begin{thm}[=Theorem\ref{thm: multiple coarse Lp: Griffiths positivity-intr.}]\label{thm: multiple coarse Lp: Griffiths positivity.}
Let $E$ be a holomorphic vector bundle over a domain $D\subset\mc^n$,
and $h$ be a singular Finsler metric on $E$,
 such that $|s|_{h^*}$ is upper semi-continuous
for any local holomorphic section $s$ of $E^*$.
If $(E,h)$ satisfies the multiple coarse  $L^p$-extension condition, then $(E,h)$ is Griffiths semi-positive.
	\end{thm}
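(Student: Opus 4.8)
The plan is to reduce this to Theorem~\ref{thm: optimal Lp estimate : Griffiths positive}, exactly as in the proof of that theorem, by showing that the multiple coarse $L^p$-extension condition forces $\log|u|_{h^*}$ to satisfy the sub-mean value inequality for every local holomorphic section $u$ of $E^*$. The new ingredient compared to the optimal case is that we no longer extend the single vector $a$, but rather its tensor powers $a^{\otimes m}$, and we must let $m\to\infty$ to kill the constants $C_m$ with the growth condition $\frac1m\log C_m\to 0$.

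More precisely, fix a holomorphic section $u$ of $E^*$ over $D$, a point $z\in D$, and a polydisc (or ball) $\Delta\Subset D$ centered at $z$. Pick $a\in E_z$ with $|a|_h=1$ and $|u(z)|_{h^*}=|\langle u(z),a\rangle|$. For each $m$, apply the multiple coarse $L^p$-extension condition to get $f_m\in H^0(D,E^{\otimes m})$ with $f_m(z)=a^{\otimes m}$ and $\int_D|f_m|^p\le C_m$. The dual section $u^{\otimes m}\in H^0(D,(E^*)^{\otimes m})$ satisfies $|u^{\otimes m}|_{(h^*)^{\otimes m}}=|u|_{h^*}^m$ (here I use that tensor power of a Finsler metric dualizes to the tensor power of the dual, which one checks from the definitions in \S\ref{subsec:finsler metric}), and the pointwise inequality $|u|_{h^*}^m\ge |\langle u^{\otimes m},f_m\rangle|/|f_m|_{h^{\otimes m}}$ on $D$, equivalently $|u|_{h^*}^m\ge |\langle u,f_m\rangle|^{1/1}$ written out as $|\langle u(w)^{\otimes m},f_m(w)\rangle| = |\langle u(w), \cdot\rangle|^{\otimes m}$ paired with $f_m$; the key point is that $\langle u^{\otimes m},f_m\rangle$ is a holomorphic function on $D$. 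Then, following the computation in the proof of Theorem~\ref{thm: optimal Lp estimate : Griffiths positive}:
\begin{align*}
m\left(\frac{1}{\mu(\Delta)}\int_\Delta \log|u|_{h^*}\right)
&= \frac{1}{\mu(\Delta)}\int_\Delta \log|u^{\otimes m}|_{(h^*)^{\otimes m}}\\
&\ge \frac{1}{\mu(\Delta)}\int_\Delta \log|\langle u^{\otimes m},f_m\rangle| - \frac{1}{p}\,\frac{1}{\mu(\Delta)}\int_\Delta \log|f_m|^p_{h^{\otimes m}}\\
&\ge \log|\langle u^{\otimes m}(z),f_m(z)\rangle| - \frac{1}{p}\log\left(\frac{1}{\mu(\Delta)}\int_\Delta|f_m|^p_{h^{\otimes m}}\right)\\
&\ge m\log|u(z)|_{h^*} - \frac{1}{p}\log\frac{C_m}{\mu(\Delta)},
\end{align*}
using Jensen's inequality on the $\log\int$ term, plurisubharmonicity of $\log|\langle u^{\otimes m},f_m\rangle|$ together with \cite[Lemma 3.1]{DNW1} on the $\int\log$ term, and $f_m(z)=a^{\otimes m}$, $|u(z)^{\otimes m}\text{-pairing}|=|u(z)|_{h^*}^m$ in the last line. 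Dividing by $m$ and letting $m\to\infty$, the term $\frac{1}{mp}\log\frac{C_m}{\mu(\Delta)}\to 0$ by the growth hypothesis, so we obtain $\log|u(z)|_{h^*}\le \frac{1}{\mu(\Delta)}\int_\Delta\log|u|_{h^*}$. Since $z$ and $\Delta$ are arbitrary and $\log|u|_{h^*}$ is upper semi-continuous by hypothesis, \cite[Lemma 3.1]{DNW1} again gives that $\log|u|_{h^*}$ is plurisubharmonic, i.e. $(E,h)$ is Griffiths semi-positive.

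The main obstacle I anticipate is the bookkeeping around the Finsler-metric tensor powers and the lower semi-continuity/integrability issues: one must make sure $\langle u^{\otimes m}, f_m\rangle$ is genuinely holomorphic (not identically zero on the relevant region — if it vanishes identically the inequality is vacuous but then so is the conclusion at that stage, so one has to argue it does not, using $f_m(z)=a^{\otimes m}$ and $\langle u(z),a\rangle\ne 0$ when $|u(z)|_{h^*}\ne 0$; the case $|u(z)|_{h^*}=0$ is trivial), that $\log|f_m|^p_{h^{\otimes m}}$ is integrable so Jensen applies, and that the identity $|u|_{h^*}^m = |u^{\otimes m}|_{(h^*)^{\otimes m}}$ holds for singular Finsler metrics — this last point should follow from the definition of the dual Finsler metric and a direct supremum computation, but deserves a line of justification. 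Everything else is a verbatim adaptation of the argument for Theorem~\ref{thm: optimal Lp estimate : Griffiths positive} with the extra limiting step in $m$.
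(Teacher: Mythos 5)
Your proposal is essentially the paper's own proof: extend $a^{\otimes m}$ by the multiple coarse $L^p$-extension condition, use $|u|_{h^*}^m=|u^{\otimes m}|_{(h^*)^{\otimes m}}\geq |\langle u^{\otimes m},f_m\rangle|/|f_m|_{h^{\otimes m}}$, integrate, apply plurisubharmonicity of $\log|\langle u^{\otimes m},f_m\rangle|$, Jensen's inequality and \cite[Lemma 3.1]{DNW1}, then divide by $m$ and let $m\to\infty$ so the growth condition on $C_m$ kills the error term. The only cosmetic difference is that you average over balls or polydiscs, whereas the paper (and the cited \cite[Lemma 3.1]{DNW1}) works with holomorphic cylinders $z+P$, which is the setting that lemma needs to upgrade the sub-mean value inequality of the upper semi-continuous function $\log|u|_{h^*}$ to genuine plurisubharmonicity; your chain of inequalities is unchanged under this substitution.
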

\begin{proof}
Let $u$ be a holomorphic section of $E^*$ over $D$.
Then $u^{\otimes m}\in H^0(D,({E^*})^{\otimes m})$.
	
Let $z\in D$ and $P$ be any holomorphic cylinder such that $z+P\subset D$. Take $a\in E_z$ such that $|a|_h=1$ and  $|u|_{h^*}(z)=|\langle u(z),a\rangle|$. Since $(E,h)$ satisfies the multiple coarse  $L^p$-extension condition,  there is $f_m\in H^0(D, E^{\otimes m})$, such that $f_m(z)=a^{\otimes m}$ and satisfies the following estimate
		$$\int_D|f_m|^p\leq C_m,$$
		where $C_m$ are constants independent of $z$ and satisfy the growth condition $\frac{1}{m}\log C_m\rightarrow 0$ as $m\rightarrow \infty$.
		Since $|u^{\otimes m}|_{(h^*)^{\otimes m}}=|u|^m_{h^*}\geq \frac{|\langle u^{\otimes m},f_m\rangle|}{|f_m|_{h^{\otimes m}}}$, we have that $$m\log |u|_{h^*}\geq \log|\langle u^{\otimes m},f_m\rangle|-\log |f_m|.$$
		Taking integration, we get that
		\begin{align*}
		m\left(\frac{1}{\mu(P)}\int_{z+P}\log |u|_{h^*}\right)&\geq  \frac{1}{\mu(P)}\int_{z+P}\log |\langle u^{\otimes m},f_m\rangle|-\frac{1}{p}\left(\frac{1}{\mu(P)}\int_{z+P}\log |f_m|^p\right)\\
		&\geq m\log|u(z)|_{h^*}-\frac{1}{p}\log\left(\frac{1}{\mu(P)}\int_{z+P}|f_m|^p\right)\\
		&\geq m\log|u(z)|_{h^*}-\frac{1}{p}\log\left(\frac{1}{\mu(P)}\int_{D}|f_m|^p\right)\\
		&\geq m\log|u(z)|_{h^*}-\frac{1}{p}\log(C_m/\mu(P)),
		\end{align*}
		where the first inequality follows from the fact that $\log |\langle u^{\otimes m},f_m\rangle|$ is a plurisubharmonic function, and \cite[Lemma 3.1]{DNW1}, and Jensen's inequality, and the second inequality follows from the fact that $z+P\subset D$.
 Dividing by $m$ in both sides, we obtain that
 \begin{align*}
 \frac{1}{\mu(P)}\int_{z+P}\log |u|_{h^*}\geq \log|u(z)|_{h^*}-\frac{1}{mp}\log(C_m/\mu(P)).
 \end{align*}
		Letting $m\rightarrow \infty$, we see that $\log|u|_{h^*}$ satisfies the following inequality
		$$ \log|u(z)|_{h^*}\leq \frac{1}{\mu(P)}\int_{z+P}\log |u|_{h^*},$$
		since $\frac{1}{m}\log C_m\rightarrow 0$ as $m\rightarrow \infty$.  Then from \cite[Lemma 3.1]{DNW1}, we get that $\log|u|_{h^*}$ is plurisubharmonic on $D$.
 	\end{proof}

%

\section{Positivities of direct images of twisted relative canonical bundles}
\subsection{Optimal $L^2$-estimate condition and Nakano positivity}
The aim of this subsection is to prove Theorem \ref{thm: direc im stein-intr} and Theorem \ref{thm: direct image-optimal L2 estimate-intr}.

To avoid some complicated geometric quantities and highlight the main idea,
we first consider a simple case of Theorem \ref{thm: direc im stein-intr} as a warm-up.

\begin{thm}\label{thm direc im stein}
Let $U$ and $D$ be bounded domains in $\mathbb{C}_t^{n}$ and $\mathbb{C}_z^{m}$ respectively, and
$\phi\in\mathcal{C}^2(\overline{U}\times \overline{D})\cap PSH(U\times D)$.
 Assume that $D$ is pseudoconvex.
For $t\in U$, let $A^2_t:=\{f\in\mathcal{O}(D):||f||^2_t:=\int_{D}|f|^2e^{-\phi(t,\cdot)}<\infty\}$ and $F:=\coprod_{t\in U}A^2_t$.
We may view $F$ as a Hermitian holomorphic vector bundle on $U$. Then $(F,||\cdot||_t)$ is Nakano semi-positive.
\end{thm}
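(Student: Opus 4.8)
The plan is to deduce this from Theorem \ref{thm:theta-nakano text} (applied with $\theta=0$): it suffices to prove that $(F,\|\cdot\|)$ satisfies the optimal $L^2$-estimate condition over $U$. First one checks that $(F,\|\cdot\|)$ is a genuine (infinite-rank) Hermitian holomorphic vector bundle in the sense of \S\ref{subsec:infinite rank v.b.}: since $\phi$ is continuous on the compact set $\overline U\times\overline D$, the weights $e^{-\phi(t,\cdot)}$ are mutually comparable, so the fibres $A^2_t$ all equal one fixed Hilbert space, $t\mapsto\|\cdot\|_t$ is smooth because $\phi\in\mathcal C^2$, and the metric is uniformly bounded below. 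Since Nakano semi-positivity is local on $U$ and each ball $B\Subset U$ carries the strictly plurisubharmonic function $|t|^2$, by Remark \ref{rem:reduce to trivial bundle} we may replace $U$ by such a ball and are reduced to the following: for every smooth strictly plurisubharmonic $\psi$ on $U$ and every $\bar\partial$-closed $g\in\mathcal C^\infty_c(U,\wedge^{n,1}T^*_U\otimes F)$ there is $u$ with $\bar\partial u=g$ and $\int_U\|u\|^2e^{-\psi}\,dV_{\omega_U}\le\int_U\langle B_\psi^{-1}g,g\rangle e^{-\psi}\,dV_{\omega_U}$, where $\omega_U=i\sum_j dt_j\wedge d\bar t_j$ and $B_\psi=[i\partial\bar\partial\psi\otimes\mathrm{Id}_F,\Lambda_{\omega_U}]$.

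The device is to pass to the total space. Writing $g=\sum_j g_j(t,\cdot)\,dt\wedge d\bar t_j$ (with $dt=dt_1\wedge\cdots\wedge dt_n$) and $g_j(t,\cdot)\in\mathcal O(D)\cap L^2(e^{-\phi(t,\cdot)})$, associate to $g$ the $(n+m,1)$-form $G:=\sum_j g_j(t,z)\,dt\wedge dz\wedge d\bar t_j$ on $U\times D$ ($dz=dz_1\wedge\cdots\wedge dz_m$). A direct check gives $\bar\partial_U g=0\iff\bar\partial_{U\times D}G=0$; moreover, if an $(n+m,0)$-form $\mathcal U=\widetilde u(t,z)\,dt\wedge dz$ solves $\bar\partial_{U\times D}\mathcal U=G$, then vanishing of the $d\bar z$-components of $G$ forces $\widetilde u(t,\cdot)$ holomorphic in $z$, so $\mathcal U$ corresponds to the $F$-valued $(n,0)$-form $u(t):=\widetilde u(t,\cdot)\,dt$ with $\bar\partial_U u=g$ (Fubini ensuring $u(t)\in F_t$ for a.e. $t$). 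With the product metric $\omega:=\omega_U+\omega_D$, $\omega_D:=i\sum_k dz_k\wedge d\bar z_k$, the pointwise norms and the volume form factor, so Fubini gives, for any weight $\lambda=\lambda(t)$,
\begin{gather*}
\int_{U\times D}|G|^2_\omega\,e^{-\phi-\lambda}\,dV_\omega=\int_U\|g(t)\|^2_t\,e^{-\lambda}\,dV_{\omega_U},\\
\int_{U\times D}|\mathcal U|^2_\omega\,e^{-\phi-\lambda}\,dV_\omega=\int_U\|u(t)\|^2_t\,e^{-\lambda}\,dV_{\omega_U}.
\end{gather*}

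Next, the curvature term upstairs. Fix $\delta>0$ and set $\Phi_\delta:=\phi+\delta|z|^2+\psi$ (with $\psi$ read as a function of $t$), which is smooth and strictly plurisubharmonic on $U\times D$. Since $G$ lies in the ``$d\bar t$-block'' of $\wedge^{n+m,1}T^*_{U\times D}$ and $B_{\Phi_\delta}:=[i\partial\bar\partial\Phi_\delta,\Lambda_\omega]$ acts on coefficients through the complex Hessian of $\Phi_\delta$, the Schur-complement formula for the inverse Hessian yields, pointwise,
\begin{align*}
\langle B_{\Phi_\delta}^{-1}G,G\rangle
&=\big\langle\big(\psi_{t\bar t}+\phi_{t\bar t}-\phi_{t\bar z}(\phi_{z\bar z}+\delta I)^{-1}\phi_{z\bar t}\big)^{-1}\mathbf g,\mathbf g\big\rangle\\
&\le\big\langle(\psi_{t\bar t})^{-1}\mathbf g,\mathbf g\big\rangle,
\end{align*}
where $\mathbf g=(g_1,\dots,g_n)$, the inequality holding because the Schur complement $\phi_{t\bar t}-\phi_{t\bar z}(\phi_{z\bar z}+\delta I)^{-1}\phi_{z\bar t}$ of the positive semidefinite complex Hessian of $\phi+\delta|z|^2$ is positive semidefinite (the nonnegativity of Berndtsson's geodesic curvature). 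Combined with $e^{-\Phi_\delta}\le e^{-\phi-\psi}$ and the identities above this gives $\int_{U\times D}\langle B_{\Phi_\delta}^{-1}G,G\rangle e^{-\Phi_\delta}dV_\omega\le\int_U\langle B_\psi^{-1}g,g\rangle e^{-\psi}dV_{\omega_U}=:M<\infty$. Now $U\times D$ is pseudoconvex, hence admits a complete Kähler metric, so Lemma \ref{thm: L2 estimate Nakano} applied to the trivial line bundle with weight $e^{-\Phi_\delta}$ produces $\mathcal U_\delta$ with $\bar\partial\mathcal U_\delta=G$ and $\int_{U\times D}|\mathcal U_\delta|^2e^{-\Phi_\delta}dV_\omega\le M$. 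Letting $\delta\downarrow0$, the weights $e^{-\Phi_\delta}$ increase to $e^{-\phi-\psi}$; a weak-limit and diagonal argument (exactly as in the proof of Theorem \ref{thm:coarse estimate text}) yields a limit $\mathcal U$ with $\bar\partial\mathcal U=G$ and, by lower semicontinuity of the $L^2$-norm together with monotone convergence, $\int_{U\times D}|\mathcal U|^2e^{-\phi-\psi}dV_\omega\le M$. Pushing $\mathcal U$ down as above gives $u$ with $\bar\partial u=g$ and $\int_U\|u\|^2_te^{-\psi}dV_{\omega_U}\le M$. Hence $(F,\|\cdot\|)$ satisfies the optimal $L^2$-estimate condition, and Theorem \ref{thm:theta-nakano text} gives that $(F,\|\cdot\|)$ is Nakano semi-positive.

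The main obstacle is the curvature comparison and its regularization. The obvious weight $\phi+\psi$ on $U\times D$ makes $B_\Phi$ degenerate precisely in the directions where $\phi$ fails to be strictly plurisubharmonic in $z$, so it cannot be inverted directly; the $\delta|z|^2$-perturbation followed by the monotone limit repairs this. The crux is the Schur-complement inequality, which expresses the fact that descending from $U\times D$ to $U$ never worsens the optimal $L^2$-constant, because the part of the curvature integrated out along the fibres — the geodesic curvature — is automatically nonnegative.
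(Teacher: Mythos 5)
Your proof is correct and follows essentially the same route as the paper's: lift the $\bar\partial$-datum to $U\times D$, solve there by Lemma \ref{thm: L2 estimate Nakano} with the weight $\phi+\psi$, compare the curvature operators, push down by Fubini to verify the optimal $L^2$-estimate condition, and conclude via Theorem \ref{thm:theta-nakano text} and Remark \ref{rem:reduce to trivial bundle}. The only difference is technical rather than conceptual: the paper applies the semi-positive version of the $L^2$ existence theorem directly, using the monotonicity $\langle B_{\phi+\psi}^{-1}f,f\rangle\le\langle B_{\psi}^{-1}f,f\rangle$, so your $\delta|z|^2$-regularization and Schur-complement computation, while correct, are an optional way of justifying that same inequality.
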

\begin{proof}
We will first prove that $(F,||\cdot||_t)$ satisfies the $\bar\partial$ optimal $L^2$-estimate for pseudoconvex domains contained in $U$.
We may assume $U$ is pseudoconvex.

For any smooth strictly plurisubharmonic function $\psi$ on $U$, for any $\bar\partial$ closed
$f\in\mathcal{C}^\infty_c(T^*_{U}\Lambda^{(0,1)}\otimes F)$ (We identify $\mathcal{C}^\infty_c(T^*_{U}\Lambda^{(0,1)}\otimes F)$ with
$\mathcal{C}^\infty_c(T^*_{U}\Lambda^{(n,1)}\otimes F)$).
We may write $f=\sum_{j=1}^{n}f_j(t,z)d\bar t_j$ with $f_j(t,\cdot)\in F_t$ for $t\in U$ and  $j=1,2,\cdots,n$.
Therefore, we may view $f$ as a $\bar\partial$-closed $(0,1)$-form on $U\times D$.
By Lemma \ref{thm: L2 estimate Nakano}, there exists a function $u$ on $U\times D$,
satisfying $\bar\partial u=f$ and
\begin{equation*}
\begin{split}
&\int_{U\times D}|u|^2e^{-(\phi+\psi)}\\
\leq&\int_{U\times D}|f|^2_{i\partial\bar\partial(\phi+\psi)}e^{-(\phi+\psi)}\\
\leq&\int_{U\times D}|f|^2_{i\partial\bar\partial\psi}e^{-(\phi+\psi)}\\
=&\int_{U}\sum_{j,k=1}^{n}\psi^{j\bar k}\langle f_j(t,\cdot),f_k(t,\cdot)\rangle_te^{-\psi},
\end{split}
\end{equation*}
where $(\psi^{j\bar k})_{n\times n}:=(\frac{\partial^2\psi}{\partial t_j\partial_{\bar t_k}})^{-1}_{n\times n}$.
Note that  $\int_{U\times D}|u|^2e^{-(\phi+\psi)}=\int_{U}||u||_t^2e^{-\psi}<\infty$ and $\frac{\partial u}{\partial\bar z_j}=0$
for $j=1,2,\cdots,m$, we may view $u$ as a $L^2$-section of $F$ on $U$.
By Theorem \ref{thm:theta-nakano text_intr} and Remark \ref{rem:reduce to trivial bundle},
$(F,||\cdot||_t)$ is Nakano semi-positive.
\end{proof}

Let $\Omega=U\times D\subset \mc^n_t\times\mc^m_z$ be a bounded pseudoconvex domains and $p:\Omega\ra U$ be the natural projection.
Let $h$ be a Hermitian metric on the trivial bundle $E=\Omega\times\mc^r$ that is $C^2$-smooth to $\overline\Omega$.
For $t\in U$, let
$$F_t:=\{f\in H^0(D, E|_{\{t\}\times D}):\|f\|^2_{t}:=\int_D|f|^2_{h_t}<\infty\}$$
and $F:=\coprod_{t\in U}F_t$.
Since $h$ is continuous to $\overline\Omega$, $F_t$ are equal for all $t\in U$ as vector spaces.
We may view $(F, \|\cdot\|)$ as a trivial holomorphic Hermitian vector bundle of infinite rank  over $U$.

\begin{thm}(=Theorem \ref{thm: direc im stein-intr})\label{thm: direc im stein}
Let $\theta$ be a continuous real $(1,1)$-form on $U$ such that $i\Theta_{E}\geq p^*\theta\otimes Id_E$,
then $i\Theta_{F}\geq \theta\otimes Id_F$ in the sense of Nakano.
In particular, if $i\Theta_{E}>0$ in the sense of Nakano, then $i\Theta_{F}>0$ in the sense of Nakano.
\end{thm}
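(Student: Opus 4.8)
The plan is to deduce the statement from the ``in particular'' part of Theorem \ref{thm:theta-nakano text_intr} (the optimal $L^2$-estimate condition forces Nakano semi-positivity), after a \emph{local modification of the metric on $E$} that converts the hypothesis $i\Theta_E\ge p^*\theta\otimes\mathrm{Id}_E$ into genuine Nakano semi-positivity. Since ``$i\Theta_F\ge\theta\otimes\mathrm{Id}_F$ in the Nakano sense'' is a pointwise condition on $U$, it suffices to fix $t_0\in U$ and $\varepsilon>0$, to prove the bound $i\Theta_F\ge\bigl(\theta(t_0)-\varepsilon\omega_0\bigr)\otimes\mathrm{Id}_F$ on a small ball $B=B(t_0,r)\Subset U$ (where $\omega_0=i\partial\bar\partial|t|^2$), and then let $\varepsilon\to 0$ and $t_0$ vary.

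First I would fix, using the continuity of $\theta$, a ball $B=B(t_0,r)\Subset U$ with $\theta\ge\theta(t_0)-\varepsilon\omega_0$ on $B$, and set $\rho(t):=q_{t_0}(t)-\varepsilon|t|^2$, where $q_{t_0}$ is the real quadratic with $i\partial\bar\partial q_{t_0}=\theta(t_0)$, so that $i\partial\bar\partial\rho=\theta(t_0)-\varepsilon\omega_0\le\theta$ on $B$. Define $\hat h:=h\,e^{\rho\circ p}$ on $E|_{B\times D}$. Then $i\Theta_{E,\hat h}=i\Theta_{E,h}-i\partial\bar\partial(\rho\circ p)\otimes\mathrm{Id}_E\ge p^*(\theta-i\partial\bar\partial\rho)\otimes\mathrm{Id}_E\ge 0$ in the sense of Nakano on $B\times D$, since $i\Theta_{E,h}\ge p^*\theta\otimes\mathrm{Id}_E$ and $p^*(\theta-i\partial\bar\partial\rho)$ is a semi-positive $(1,1)$-form. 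Because $\rho\circ p$ is bounded on the bounded set $B\times D$, the spaces $\hat F_t:=\{f\in H^0(D,E|_{\{t\}\times D}):\int_D|f|^2_{\hat h_t}<\infty\}$ coincide with $F_t$; and since $\rho$ depends on $t$ only, $\|f\|^2_{\hat F_t}=e^{\rho(t)}\|f\|^2_{F_t}$, so over $B$ one has $\hat F=F\otimes L_\rho$ with $L_\rho$ the trivial line bundle of curvature $-i\partial\bar\partial\rho$, whence $i\Theta_F=i\Theta_{\hat F}+i\partial\bar\partial\rho\otimes\mathrm{Id}_F$. Thus everything reduces to showing $i\Theta_{\hat F}\ge 0$ in the Nakano sense on $B$.

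For this I would check that $(\hat F,\|\cdot\|_{\hat F})$ satisfies the optimal $L^2$-estimate condition on $B$, along the lines of the proof of Theorem \ref{thm direc im stein}. By Remark \ref{rem:reduce to trivial bundle} it is enough, for each smooth strictly plurisubharmonic $\psi$ on $B$ and each $\bar\partial$-closed $f=\sum_j f_j\,d\bar t_j\wedge dt\in\mathcal{C}^\infty_c(B,\wedge^{n,1}T^*_B\otimes\hat F)$ with $f_j(t,\cdot)\in\hat F_t$, to solve $\bar\partial u=f$ with the sharp estimate. View $\tilde f:=\sum_j f_j\,d\bar t_j$ as a $\bar\partial$-closed $E$-valued $(0,1)$-form on $B\times D$ having no $d\bar z$-component, and apply H\"ormander's estimate (Lemma \ref{thm: L2 estimate Nakano}) on the bounded pseudoconvex domain $B\times D$ to $(E,\hat h\,e^{-p^*\psi})$: its curvature $i\Theta_{E,\hat h}+p^*i\partial\bar\partial\psi\otimes\mathrm{Id}_E\ge p^*i\partial\bar\partial\psi\otimes\mathrm{Id}_E\ge 0$ is Nakano semi-positive, so the associated operator $\mathbf B=[\,\cdot\,,\Lambda]$ on $(n+m,1)$-forms is $\ge 0$, giving $\tilde u:B\times D\to\mathbb{C}^r$ with $\bar\partial\tilde u=\tilde f$ and $\int_{B\times D}|\tilde u|^2_{\hat h}e^{-p^*\psi}\le\int_{B\times D}\langle\mathbf B^{-1}\tilde g,\tilde g\rangle e^{-p^*\psi}$, where $\tilde g=\tilde f\wedge dt\wedge dz$. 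Comparing $d\bar z$-components gives $\bar\partial_z\tilde u=0$, so $\tilde u(t,\cdot)$ is holomorphic on $D$ and lies in $\hat F_t$ for a.e.\ $t$; hence $\tilde u$ is a section $u$ of $\hat F$ over $B$ with $\bar\partial u=f$. Since $\tilde g$ has only $d\bar t_j$-components, only the $(t,\bar t)$-block of the curvature enters $\langle\mathbf B^{-1}\tilde g,\tilde g\rangle$, and that block dominates the block-diagonal form $p^*i\partial\bar\partial\psi\otimes\mathrm{Id}_E$; by the variational characterization $\langle\mathbf B^{-1}\tilde g,\tilde g\rangle=\sup_w\bigl(2\,\mathrm{Re}\langle\tilde g,w\rangle-\langle\mathbf Bw,w\rangle\bigr)$ one then gets $\langle\mathbf B^{-1}\tilde g,\tilde g\rangle\le\sum_{j,k}\psi^{j\bar k}\langle f_j,f_k\rangle_{\hat h}$, and integrating over the fibers of $p$ turns the estimate into $\int_B\|\tilde u(t)\|^2_{\hat F_t}e^{-\psi}\le\int_B\langle B_\psi^{-1}f,f\rangle_{\hat F}\,e^{-\psi}$ with $B_\psi=[i\partial\bar\partial\psi\otimes\mathrm{Id}_{\hat F},\Lambda]$. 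This is exactly the optimal $L^2$-estimate condition, so Theorem \ref{thm:theta-nakano text_intr} yields $i\Theta_{\hat F}\ge 0$ in the Nakano sense on $B$.

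Putting this together, $i\Theta_F=i\Theta_{\hat F}+i\partial\bar\partial\rho\otimes\mathrm{Id}_F\ge\bigl(\theta(t_0)-\varepsilon\omega_0\bigr)\otimes\mathrm{Id}_F$ on $B$; evaluating at $t_0$ and letting $\varepsilon\to 0$ (shrinking $r$) gives $i\Theta_F(t_0)\ge\theta(t_0)\otimes\mathrm{Id}_F$, and as $t_0$ is arbitrary, $i\Theta_F\ge\theta\otimes\mathrm{Id}_F$ in the sense of Nakano on $U$. The final assertion is the special case where one applies the first part to a continuous positive $(1,1)$-form $\tilde\theta$ on $U$ with $i\Theta_E\ge p^*\tilde\theta\otimes\mathrm{Id}_E$, which exists because $i\Theta_E>0$ in the Nakano sense. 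The step I expect to be the main obstacle is the passage to H\"ormander's estimate on $\Omega=U\times D$: the bundle $(E,h)$ is not assumed Nakano semi-positive — only $i\Theta_E\ge p^*\theta\otimes\mathrm{Id}_E$, and $\theta$ may be negative and even blow up near $\partial U$ — so one cannot run the warm-up argument directly. The device that repairs this is precisely the metric modification $\hat h=h\,e^{\rho\circ p}$: because $\theta$ enters only as a lower bound, a local (quadratic) plurisubharmonic approximation of $\theta(t_0)$ can be absorbed into the metric, restoring Nakano semi-positivity on a ball at the harmless price of twisting the direct image by the flat factor $e^{\rho(t)}$; one then only has to keep careful track of how this scaling (a function of $t$ alone) shifts $i\Theta_F$ and of the fact, used above, that the minimal $L^2$-solution is automatically fiberwise holomorphic and that only the $(t,\bar t)$-block of the curvature appears in the optimal constant.
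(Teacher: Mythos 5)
Your proposal is correct, and its computational core (pull the $\bar\partial$-datum back to the total space $B\times D$, solve with H\"ormander's estimate for a Nakano semi-positive twist, observe that the minimal-type solution is fiberwise holomorphic, and push the estimate down by Fubini, noting that only the $(t,\bar t)$-block of the curvature enters) is the same as the paper's. Where you genuinely diverge is in how the lower bound $\theta$ is handled: the paper verifies the \emph{$\theta$-twisted} hypothesis of Theorem \ref{thm:theta-nakano text_intr} directly and globally on $U$, taking an arbitrary auxiliary positive line bundle $(A,h_A)$ with $i\Theta_{A,h_A}+\theta>0$ on $\mathrm{supp}\,f$, using $i\Theta_{E\otimes p^*A}\geq p^*(\theta+i\Theta_{A,h_A})\otimes \mathrm{Id}_E$ and the monotonicity of $\langle [\cdot,\Lambda]^{-1}f,f\rangle$ to land exactly on $B_{h_A,\theta}$, so the inequality $i\Theta_F\geq\theta\otimes\mathrm{Id}_F$ comes out in one shot with no limiting procedure; you instead use only the untwisted ``in particular'' part of Theorem \ref{thm:theta-nakano text_intr}, absorbing a quadratic potential of $\theta(t_0)-\varepsilon\omega_0$ into the metric via $\hat h=h\,e^{\rho\circ p}$ on a small ball, reading off $i\Theta_F=i\Theta_{\hat F}+i\partial\bar\partial\rho\otimes\mathrm{Id}_F$ from the conformal factor (a function of $t$ alone), and then letting $\varepsilon\to0$. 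Your route buys a reduction to the simplest form of the converse theorem and to honestly Nakano semi-positive data upstairs (so the warm-up argument of Theorem \ref{thm direc im stein} applies verbatim), at the price of the localization, the pointwise approximation and limit, and the extra bookkeeping of the twist; the paper's route buys a global, exact statement but requires the full $\theta$-version of the characterization. Both arguments share the same mild gaps at the same places (smoothness/measurability bookkeeping for the infinite-rank bundle, and the ``in particular'' clause: deducing it from the main statement requires a continuous positive form $\tilde\theta$ on $U$ with $i\Theta_E\geq p^*\tilde\theta\otimes\mathrm{Id}_E$, which needs positivity that is uniform along the fibers, e.g.\ up to $\partial D$, and is not spelled out in the paper either), so on these points you are at the same level of rigor as the text.
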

\begin{proof}
	By Theorem \ref{thm:theta-nakano text}, it suffices to prove that $(F,\|\cdot\|)$ satisfies:
for any $f\in\mathcal{C}^\infty_c(U,\wedge^{n,1}T^*_U\otimes F\otimes A)$ with $\bar\partial f=0$,
and any positive Hermitian line bundle $(A,h_A)$ on $U$ with $i\Theta_{A,h_A}+\theta>0$ on $\text{supp}f$,
there is $u\in L^2(U,\wedge^{n,0}T_U^*\otimes F\otimes A)$, satisfying $\bar\partial u=f$ and
$$\int_U|u|^2_{h\otimes h_A}dV_\omega\leq \int_U\langle B_{h_A,\theta}^{-1}f,f\rangle_{h\otimes h_A} dV_\omega,$$
provided that the right hand side is finite,
where $B_{h_A,\theta}=[(i\Theta_{A,h_A}+\theta)\otimes Id_F,\Lambda_\omega]$.
	
 We may write $f=\sum_{j=1}^{n}f_j(t,z)dt\wedge d\bar t_j$ with $f_j(t,\cdot)\in F_t\otimes A$ for $t\in U$ and  $j=1,2,\cdots,n$.
 Therefore, we may view $f$ as a $\bar\partial$-closed $E\otimes p^*A$-valued $(n,1)$-form on $\Omega$. Let $\tilde{f}=f\wedge dz$,
then $\tilde{f}$ is a $\bar\partial$-closed $E\otimes p^*A$-valued $(m+n,1)$-form on $\Omega$.
 By assumption, $i\Theta_{E}\geq p^*\theta\otimes Id_E$. We get
$$i\Theta_{E}+ip^*(\Theta_{A,h_A})\otimes Id_E\geq p^*(\theta+i\Theta_{A,h_A})\otimes Id_E.$$
Therefore,
\begin{equation*}
\begin{split}
&\langle[i\Theta_{E}+ip^*(\Theta_{A,h_A})\otimes Id_E,\Lambda_\omega]^{-1}\tilde{f},\tilde{f}\rangle_{h\otimes h_A}\\
\leq&\langle[p^*(\theta+i\Theta_{A,h_A})\otimes Id_E,\Lambda_\omega]^{-1}\tilde{f},\tilde{f}\rangle_{h\otimes h_A}
\end{split}
\end{equation*}


	By Lemma \ref{thm: L2 estimate Nakano}, we can find an $E\otimes p^*A$-valued $(n+m,0)$-form $\tilde u$ on $\Omega$,
	satisfying $\bar\partial\tilde u= \tilde f$ and
	\begin{equation*}
	\begin{split}
	&\int_{\Omega}|\tilde u|_{h\otimes h_A}^2\\
	\leq&\int_{\Omega}\langle[p^*(\theta+i\Theta_{A,h_A})\otimes Id_E,\Lambda_\omega]^{-1}\tilde{f},\tilde{f}\rangle_{h\otimes h_A}\\
	=&\int_U\langle B_{h_A,\theta}^{-1}f,f\rangle_{h\otimes h_A},
	\end{split}
	\end{equation*}
where the last equality holds by the Fubini theorem.
 Since $\frac{\partial\tilde u}{\partial z_j}=0$, $\tilde u$ is holomorphic along fibers and we may view $u=\tilde u/dz$ as a section of $K_U\otimes F\otimes A$.
Also by the Fubini theorem, we have
$$\int_{\Omega}|\tilde u|^2_{h\otimes h_A}=\int_{U}||u||_{h\otimes h_A}^2<\infty.$$
We also have $\bar\partial u=f$.
Hence $(F,\|\cdot\|)$ satisfies the optimal $L^2$-estimate condition and is Nakano semi-positive
by \ref{thm:theta-nakano text}.
\end{proof}

Let $\pi:X\rightarrow U$ be a proper holomorphic submersion from  K\" ahler manifold $X$ of complex dimension $m+n$,  to a  bounded pseudoconvex domain  $U\subset \mathbb C^n$, and $(E,h)$ be a Hermitian holomorphic vector bundle over $X$, with the Chern curvature Nakano semi-positive.
From Lemma \ref{thm: optimal L2 extension}, the direct image $F:=\pi_*(K_{X/U}\otimes E)$ is a vector bundle,
whose fiber over $t\in U$ is $F_t=H^0(X_t, K_{X_t}\otimes E|_{X_t})$.
There is a hermtian metric $\|\cdot\|$ on $F$ induced by $h$:
for any $u\in F_t$,
$$\|u(t)\|^2_t:=\int_{X_t}c_mu\wedge \bar u,$$
where $m=\dim X_t$, $c_m=i^{m^2}$, and $u\wedge \bar u$ is the composition of the wedge product and the inner product on $E$.
So we get a Hermitian holomorphic vector bundle $(F, \|\cdot\|)$ over $U$.

\begin{thm}(=Theorem \ref{thm: direct image-optimal L2 estimate-intr})\label{thm: direct image-optimal L2 estimate}
The Hermitian holomorphic vector bundle $(F, \|\cdot\|)$ over $U$ defined above
satisfies the optimal $L^2$-estimate condition.
Moreover, if $i\Theta_{E}\geq p^*\theta\otimes Id_E$ for a continuous real $(1,1)$-form $\theta$ on $U$,
then $i\Theta_{F}\geq \theta\otimes Id_F$ in the sense of Nakano.
\end{thm}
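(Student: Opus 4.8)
The plan is to deduce both assertions from Theorem~\ref{thm:theta-nakano text} by verifying, for the Hermitian holomorphic bundle $(F,\|\cdot\|)$ over $U$ --- equipped with the Euclidean K\"ahler metric $\omega_U$ and the positive line bundle $(U\times\mc,e^{-|t|^2})$ --- the $\theta$-twisted $\bar\partial$-estimate hypothesis appearing there; the choice $\theta=0$ then gives the optimal $L^2$-estimate condition (the first assertion), while general $\theta$ gives $i\Theta_F\ge\theta\otimes Id_F$ in the sense of Nakano. By Remark~\ref{rem:reduce to trivial bundle} it suffices to treat $A=U\times\mc$ carrying a weight $e^{-\varphi}$ with $\varphi$ a smooth strictly plurisubharmonic function on $U$. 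So I fix such a $\varphi$ together with a $\bar\partial$-closed $f\in\mathcal{C}^\infty_c(U,\wedge^{n,1}T^*_U\otimes F)$ with $(\theta+i\partial\bar\partial\varphi)\otimes Id_F>0$ on $\text{supp}f$, and write $f=\sum_j f_j\,dt\wedge d\bar t_j$ with $f_j(t,\cdot)\in F_t=H^0(X_t,K_{X_t}\otimes E|_{X_t})$. Using $K_X=K_{X/U}\otimes\pi^*K_U$ I lift $f$ to an $E$-valued $(m+n,1)$-form $\tilde f$ on $X$, locally $\tilde f=\sum_j\hat f_j\wedge\pi^*(dt\wedge d\bar t_j)$, where $\hat f_j$ is the section of $K_{X/U}\otimes E$ restricting to $f_j(t)$ on each fibre $X_t$. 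Since $\pi$ is proper, $\tilde f$ is smooth with compact support; since $f$ is $\bar\partial$-closed and the $\hat f_j$ are holomorphic along fibres one gets $\bar\partial\tilde f=0$; and $\tilde f$ has no fibre-antiholomorphic components.

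As $i\Theta_{E,h}\ge0$ in the sense of Nakano, the curvature of $(E,he^{-\pi^*\varphi})$ equals $i\Theta_{E,h}+\pi^*(i\partial\bar\partial\varphi)\otimes Id_E$, which is Nakano semi-positive on all of $X$ and, when moreover $i\Theta_{E,h}\ge\pi^*\theta\otimes Id_E$, dominates $\pi^*(\theta+i\partial\bar\partial\varphi)\otimes Id_E$. The total space $X$ admits a complete K\"ahler metric: the fibres of $\pi$ are compact and $U$, being a bounded pseudoconvex domain, carries a complete K\"ahler metric, so the sum of its pull-back with any K\"ahler metric on $X$ is complete (a divergent sequence in $X$ must project to a sequence escaping to $\partial U$). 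Fixing any K\"ahler metric $\omega$ on $X$ and applying H\"ormander's estimate (Lemma~\ref{thm: L2 estimate Nakano}) on $X$, I obtain $\tilde u$ with $\bar\partial\tilde u=\tilde f$ and
\begin{align*}
\int_X|\tilde u|^2_h\,e^{-\pi^*\varphi}\,dV_\omega
&\le\int_X\big\langle[i\Theta_{E,h}+\pi^*(i\partial\bar\partial\varphi)\otimes Id_E,\Lambda_\omega]^{-1}\tilde f,\tilde f\big\rangle_h\,e^{-\pi^*\varphi}\,dV_\omega\\
&\le\int_X\big\langle[\pi^*(\theta+i\partial\bar\partial\varphi)\otimes Id_E,\Lambda_\omega]^{-1}\tilde f,\tilde f\big\rangle_h\,e^{-\pi^*\varphi}\,dV_\omega ,
\end{align*}
the last inequality being the monotonicity $B\ge B'\ge0\Rightarrow\langle B^{-1}v,v\rangle\le\langle B'^{-1}v,v\rangle$, which applies on $\text{supp}\tilde f$ because there $\theta+i\partial\bar\partial\varphi>0$ and $\tilde f$ lies in the invariant subspace of $(m+n,1)$-forms with only base-antiholomorphic components, on which $[\pi^*(\theta+i\partial\bar\partial\varphi)\otimes Id_E,\Lambda_\omega]$ is positive definite.

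Finally I would push everything down to $U$. Because $\bar\partial\tilde u=\tilde f$ has no fibre-antiholomorphic part, $\tilde u$ is holomorphic along the fibres, so $\tilde u=u\wedge\pi^*dt$ for a unique $L^2$ section $u$ of $K_U\otimes F$ with $\bar\partial u=f$. For the two sides of the estimate I use: (i) for a form of top holomorphic degree $c_{m+n}\tilde u\wedge\overline{\tilde u}\,h=|\tilde u|^2_h\,dV_\omega$ independently of $\omega$, so fibre integration for $K_{X/U}$ yields $\int_X|\tilde u|^2_h\,e^{-\pi^*\varphi}\,dV_\omega=\int_U\|u(t)\|_t^2\,e^{-\varphi}\,dV_{\omega_U}$; and (ii) in a local frame adapted to $\pi$ the operator $[\pi^*\Gamma\otimes Id_E,\Lambda_\omega]^{-1}$ acts on the base-antiholomorphic subspace as $\Gamma^{-1}\otimes Id_E$, whence $\big\langle[\pi^*\Gamma\otimes Id_E,\Lambda_\omega]^{-1}\tilde f,\tilde f\big\rangle_h\,dV_\omega=\sum_{j,k}\Gamma^{j\bar k}\,c_{m+n}(\hat f_j\wedge\pi^*dt)\wedge\overline{(\hat f_k\wedge\pi^*dt)}\,h$, and integrating over $X$ gives $\int_U\sum_{j,k}\Gamma^{j\bar k}\langle f_j,f_k\rangle_t\,e^{-\varphi}\,dV_{\omega_U}=\int_U\langle[\Gamma\otimes Id_F,\Lambda_{\omega_U}]^{-1}f,f\rangle\,e^{-\varphi}\,dV_{\omega_U}$ with $\Gamma=\theta+i\partial\bar\partial\varphi$. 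Combining (i) and (ii), $u$ satisfies $\bar\partial u=f$ together with the estimate demanded in Theorem~\ref{thm:theta-nakano text}; hence $(F,\|\cdot\|)$ satisfies the optimal $L^2$-estimate condition, and $i\Theta_F\ge\theta\otimes Id_F$ in the sense of Nakano.

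I expect step (ii) to be the real obstacle: it is the fibre-integration identity which, in the product case, is dispatched by the single phrase ``by the Fubini theorem'' in the proof of Theorem~\ref{thm: direc im stein}. Over an honest fibration one has to carry the relative canonical bundle $K_{X/U}$ through the curvature term and choose a local frame adapted to $\pi$ in order to recognise $\langle[\pi^*\Gamma\otimes Id_E,\Lambda_\omega]^{-1}\tilde f,\tilde f\rangle_h\,dV_\omega$ as precisely the fibrewise $\|\cdot\|$-version of $\langle[\Gamma\otimes Id_F,\Lambda_{\omega_U}]^{-1}f,f\rangle\,dV_{\omega_U}$; this is Berndtsson-type bookkeeping and is the one place where the fibration, as opposed to a product, genuinely enters. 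A secondary and essentially routine point is to check that the H\"ormander solution $\tilde u$ is holomorphic along the fibres and descends to a genuine $L^2$ section of $K_U\otimes F$.
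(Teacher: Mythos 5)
Your proposal follows essentially the same route as the paper: lift $f$ to a compactly supported $\bar\partial$-closed $E$-valued $(m+n,1)$-form $\tilde f$ on $X$, solve $\bar\partial\tilde u=\tilde f$ by H\"{o}rmander for the twisted metric $he^{-\pi^*\varphi}$, use the monotonicity $B\ge B'\ge0\Rightarrow\langle B^{-1}v,v\rangle\le\langle B'^{-1}v,v\rangle$ on the base-antiholomorphic subspace, observe $\tilde u$ is fiberwise holomorphic so it descends, and then Fubini both sides. The paper reduces to $\theta=0$ first and then applies Theorem~\ref{thm:theta-nakano text}; you carry $\theta$ along, which is harmless.

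The one place where your writeup is not quite correct as stated is exactly the place you flag: step~(ii). You assert that ``in a local frame adapted to $\pi$ the operator $[\pi^*\Gamma\otimes Id_E,\Lambda_\omega]^{-1}$ acts on the base-antiholomorphic subspace as $\Gamma^{-1}\otimes Id_E$.'' That is only true when $\omega$ is, at the point considered, a product of a metric on the base and a metric on the fiber; for an arbitrary K\"{a}hler metric $\omega$ on $X$, the operator $[\pi^*\Gamma\otimes Id_E,\Lambda_\omega]$ mixes base and fiber directions, and a choice of adapted coordinates does not diagonalize it. What rescues the computation is the appendix Lemma~\ref{lem:inverse cur. indep of metric} of the paper: the top-degree form $\langle[i\Theta,\Lambda_\omega]^{-1}u,u\rangle_\omega\,dV_\omega$ on $(n,1)$-forms $u\in Im[i\Theta,\Lambda_\omega]$ is \emph{independent of the K\"{a}hler metric} $\omega$. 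With this, one may at any given point replace $\omega$ by a Euclidean metric $\omega'=i\sum dt_j\wedge d\bar t_j+i\sum dz_l\wedge d\bar z_l$ in adapted coordinates, for which your computation is valid, and then Fubini gives the identity
$$\int_X\big\langle[\pi^*\Gamma\otimes Id_E,\Lambda_\omega]^{-1}\tilde f,\tilde f\big\rangle_h e^{-\pi^*\varphi}\,dV_\omega
=\int_U\big\langle[\Gamma\otimes Id_F,\Lambda_{\omega_U}]^{-1}f,f\big\rangle e^{-\varphi}\,dV_{\omega_U}.$$
So the gap you anticipated is genuine, and the missing ingredient is precisely that metric-independence lemma, not the frame-adapted diagonalization you invoked. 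Once this is supplied, your argument matches the paper's.

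One further remark: you spend some effort verifying that $X$ carries a complete K\"{a}hler metric before invoking Lemma~\ref{thm: L2 estimate Nakano}; this is correct and necessary for the cited lemma, and the paper leaves it implicit, so your inclusion is a small improvement in exposition.
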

\begin{proof}
Similar to the proof of Theorem \ref{thm: direc im stein}, we may assume $\theta=0$.
From Theorem \ref{thm:theta-nakano text}, it suffices to prove that $(\pi_*(K_{X/Y}\otimes E), \|\cdot\|)$
satisfies the optimal $L^2$-estimate condition with the standard K\"ahler metric $\omega_0$ on $U\subset\mc^n$.
Let $\omega$ be an arbitrary K\"ahler metric on $X$.

Let $f$ be a $\bar\partial$-closed  compact supported smooth $(n,1)$-form with values in $F$,
and let $\psi$ be any smooth strictly plurisubharmonic function on $U$.

We can write $f(t)=dt\wedge(f_1(t)d\bar t_1+\cdots +f_n(t)d\bar t_n)$,
with $f_i(t)\in F_t=H^0(X_t, K_{X_t}\otimes E)$.
One can identify  $f$ as a smooth compact supported $(n+m,1)$-form $\tilde f(t,z):=dt\wedge (f_1(t,z)d\bar t_1+\cdots+f_n(t,z)d\bar t_n)$ on $X$,
with $f_i(t,z)$ being holomorphic section of $K_{X_t}\otimes E|_{X_t}$.
We have the following observations:
\begin{itemize}
\item $\bar\partial_zf_i(t,z)=0$ for any fixed $t\in B$, since  $f_i(t,z)$ are holomorphic  sections $K_{X_t}\otimes E|_{X_t}$.
\item $\bar\partial_tf=0$, since $f$ is a $\bar\partial$-closed form on $B$.
\end{itemize}
It follows that $\tilde f$ is a $\bar\partial$-closed compact supported  $(n+m,1)$-form on $X$ with values in $E$.
We want to solve the equation $\bar\partial u=\tilde f$ on $X$ by using Lemma \ref{thm: L2 estimate Nakano}.
Now we equipped $E$ with the metric $\tilde h:=he^{-\pi^*\psi}$,
then $i\Theta_{E,\tilde h}=i\Theta_{E,h}+i\partial\bar\partial\pi^*\psi\otimes Id_{E}$, which is also  semi-positive in the sense of Nakano.

We consider the integration
$$\int_X\langle [i\Theta_{E,h}+i\partial\bar\partial \pi^*\psi\otimes Id_E, \Lambda_{\omega}]^{-1}\tilde f,\tilde f\rangle e^{-\pi^*\psi}dV_\omega.$$
Note that, acting on $\Lambda^{n+m,1}T^*_X\otimes E$, by Lemma \ref{lem: Hermitian operator formula}, we have
\begin{align*}
[i\Theta_{E,h}+i\partial\bar\partial \pi^*\psi\otimes Id_E, \Lambda_{\omega}]
\geq [i\partial\bar\partial \pi^*\psi\otimes Id_E, \Lambda_{\omega}].
\end{align*}
Thus we obtain that
\begin{align*}
[i\Theta_{E,h}+i\partial\bar\partial \pi^*\psi\otimes Id_E, \Lambda_{\omega}]^{-1}\leq  [i\partial\bar\partial \pi^*\psi\otimes Id_E, \Lambda_{\omega}]^{-1}.
\end{align*}

For any $p\in X$, we use Lemma \ref{lem:inverse cur. indep of metric} to modify $\omega$ at $p$.
We take a local coordinate $(t_1, \cdots, t_n, z_1, \cdots, c_m)$ on $X$ near $p$,
where $t_1,\cdots, t_n$ is the standard coordinate on $U\subset\mc^n$.
Let $\omega'=i\sum_{j=1}^n dt_j\wedge d\bar{t}_j+i\sum_{l=1}^{m}dz_l\wedge d\bar{z}_l$.

Note that
$$i\partial\bar\partial \pi^*\psi=\sum_{j=1}^{n}\frac{\partial^2\psi}{\partial t_j\partial\bar{t}_k}dt_j\wedge d\bar{t}_k,$$
we have
$$[i\partial\bar\partial \pi^*\psi\otimes Id_E, \Lambda_{\omega'}]\tilde f
=\sum_{j,k}\frac{\partial^2\psi}{\partial t_j\partial\bar{t}_k}f_j(t,z)dt\wedge d\bar{t}_k,$$
and
$$[i\partial\bar\partial \pi^*\psi\otimes Id_E, \Lambda_{\omega'}]^{-1}\tilde f
=\sum_{j,k}\psi^{jk}f_j(t,z)dt\wedge d\bar{t}_k$$
at $p$, where $(\psi^{jk})=(\frac{\partial^2\psi}{\partial t_j\partial\bar{t}_k})^{-1}.$
By Lemma \ref{lem:inverse cur. indep of metric}, we have
\begin{equation*}
\begin{split}
 &\langle[i\partial\bar\partial \pi^*\psi\otimes Id_E, \Lambda_{\omega}]^{-1}\tilde f,\tilde f\rangle_{\omega}dV_{\omega}\\
=&\langle[i\partial\bar\partial \pi^*\psi\otimes Id_E, \Lambda_{\omega'}]^{-1}\tilde f,\tilde f\rangle_{\omega'}dV_{\omega'}\\
=&\sum_{j,k}\psi^{jk}c_m f_j\wedge \bar{f}_kc_ndt\wedge d\bar{t}.
\end{split}
\end{equation*}

By Fubini's theorem, we get that
\begin{align*}
 &\int_X\langle  [i\partial\bar\partial \pi^*\psi\otimes Id_E, \Lambda_{\omega}]^{-1}\tilde f,\tilde f\rangle_\omega e^{-\pi^*\psi} dV_\omega\\
=&\int_X\sum_{j,k}\psi^{jk}c_m f_j\wedge \bar{f}_k e^{-\pi^*\psi}c_ndt\wedge d\bar{t}\\
=&\int_U<f_j,f_k>_t\psi^{jk}e^{-\psi}c_ndt\wedge d\bar t\\
=&\int_U\langle [i\partial\bar\partial \psi, \Lambda_{\omega_0}]^{-1}f,f\rangle_te^{-\psi}dV_{\omega_0},
\end{align*}
where by $\langle\cdot,\cdot\rangle_t$, we mean that pointwise inner product with respect to the Hermitian metric $\|\cdot\|$ of $F$.

From Lemma \ref{thm: L2 estimate Nakano}, there is  $\tilde u\in \Lambda^{m+n,0}(X,E)$, such that $\bar\partial\tilde u=\tilde f$, and satisfies the following estimate
\begin{align}\label{eqn: optimal L2 estimate 1}
     &\int_Xc_{m+n}\tilde u\wedge \bar{\tilde u}e^{-\pi^*\psi}\notag\\
\leq &\int_X\langle [i\Theta_{E,h}+i\partial\bar\partial \pi^*\psi\otimes Id_E, \Lambda_{\omega}]^{-1}\tilde f,\tilde f\rangle e^{-\pi^*\psi}dV_\omega\\
\leq &\int_U\langle [i\partial\bar\partial \psi, \Lambda_{\omega_0}]^{-1}f,f\rangle_te^{-\psi}dV_{\omega_0}.\notag
\end{align}

 We observe that $\bar\partial\tilde u|_{X_t}=0$ for any fixed $t\in U$,
 since $\bar\partial\tilde u=\tilde f$.
 This means that $\tilde u_t:=\tilde u(t,\cdot)\in F_t$.
 Therefore we may view $\tilde u$ as a section $u$ of $F$.
 It is obviously that $\bar\partial u=f$.

From Fubini's theorem, we have that
\begin{align}\label{eqn:optimal L2 estimate 2}
\int_Xc_{m+n}\tilde u\wedge \bar{\tilde u}e^{-\pi^*\psi}=\int_U\|u\|_t^2e^{-\psi}dV_{\omega_0}.
\end{align}

Combining \eqref{eqn: optimal L2 estimate 1}, we have
\begin{align*}
\int_U\|u\|_t^2e^{-\psi}dV_{\omega_0}\leq \int_U\langle [i\partial\bar\partial \psi, \Lambda_{\omega_0}]^{-1}f,f\rangle_te^{-\psi}dV_{\omega_0}.
\end{align*}
We have proved that $F$ satisfies the optimal $L^2$-estimate condition, thus from Theorem \ref{thm:theta-nakano text} (and Remark \ref{rem:reduce to trivial bundle}),  $F$ is Nakano semi-positive.
\end{proof}

\subsection{Multiple coarse $L^2$-estimate condition and Griffiths positivity}
We apply Theorem \ref{thm:coarse estimate text-intr} and the fiber product technique introduced in \cite{DWZZ1} to provide a new method to study the Griffiths positivity of direct images.
\begin{thm}\label{thm: direct image-coarse L2 estimate}
The Hermitian holomorphic vector bundle $(F, \|\cdot\|)$ over $U$ as in Theorem \ref{thm: direct image-optimal L2 estimate}
satisfies the multiple coarse  $L^2$-estimate condition.
In particular, $F$ is Griffiths semipositive.
\end{thm}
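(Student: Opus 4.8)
The plan is to identify the tensor powers of $(F,\|\cdot\|)$ with direct images of twisted relative canonical bundles over fiber products and then quote Theorem~\ref{thm: direct image-optimal L2 estimate}. Throughout, to avoid a clash with $m=\dim X_t$, I write $k$ for the tensor power. Fix $k\geq 1$ and let $X^{(k)}:=X\times_U\cdots\times_U X$ be the $k$-fold fiber product, with structure map $\pi^{(k)}:X^{(k)}\to U$ and the $i$-th projection $pr_i:X^{(k)}\to X$. Since $\pi$ is a proper holomorphic submersion, so is $\pi^{(k)}$; moreover $X^{(k)}$ is a closed complex submanifold of the ordinary product $X\times\cdots\times X$ (cut out by $\pi\circ pr_i=\pi\circ pr_j$), hence is K\"ahler. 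The fiber $(X^{(k)})_t$ is the $k$-fold product $X_t\times\cdots\times X_t$. Equip $E^{(k)}:=pr_1^*E\otimes\cdots\otimes pr_k^*E$ with the metric $h^{(k)}:=pr_1^*h\otimes\cdots\otimes pr_k^*h$; writing $X^{(k)}=X^{(k-1)}\times_U X$ and applying Lemma~\ref{lem: Nakano fiber tensor product} inductively, $(E^{(k)},h^{(k)})$ is Nakano semi-positive.

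Next I would record two structural identifications. First, the relative tangent bundle of a fiber product is the direct sum of the pullbacks of the relative tangent bundles, so $K_{X^{(k)}/U}\cong\bigotimes_{i=1}^k pr_i^*K_{X/U}$ and hence $K_{X^{(k)}/U}\otimes E^{(k)}\cong\bigotimes_{i=1}^k pr_i^*(K_{X/U}\otimes E)$. Second, by the K\"unneth formula on the compact fibers, $H^0\big((X^{(k)})_t,\ K_{(X^{(k)})_t}\otimes E^{(k)}|_{(X^{(k)})_t}\big)\cong H^0(X_t,K_{X_t}\otimes E|_{X_t})^{\otimes k}=F_t^{\otimes k}$. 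Glueing these over $t\in U$ yields an isomorphism of holomorphic vector bundles $\pi^{(k)}_*(K_{X^{(k)}/U}\otimes E^{(k)})\cong F^{\otimes k}$, and --- using Fubini's theorem on the fibers together with the normalizing constants $c_m$ in the definition of $\|\cdot\|$ --- this isomorphism is an isometry when the left side carries the $L^2$-metric induced by $h^{(k)}$ and the right side carries the naturally induced Hermitian metric $\|\cdot\|^{\otimes k}$.

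With these identifications in hand, the proof concludes at once. Theorem~\ref{thm: direct image-optimal L2 estimate}, applied to the proper holomorphic submersion $\pi^{(k)}:X^{(k)}\to U$ together with the Nakano semi-positive bundle $(E^{(k)},h^{(k)})$, asserts that $\pi^{(k)}_*(K_{X^{(k)}/U}\otimes E^{(k)})$ with its $L^2$-metric satisfies the optimal $L^2$-estimate condition. Transported through the isometry above, this says exactly that for every positive Hermitian holomorphic line bundle $(A,h_A)$ on $U$ and every $\bar\partial$-closed $f\in\mathcal{C}^\infty_c(U,\wedge^{n,1}T^*_U\otimes F^{\otimes k}\otimes A)$ there is $u$ with $\bar\partial u=f$ and $\int_U|u|^2_{\|\cdot\|^{\otimes k}\otimes h_A}\,dV_\omega\leq\int_U\langle B_{A,h_A}^{-1}f,f\rangle\,dV_\omega$, provided the right hand side is finite. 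Since this holds for every $k\geq1$, $(F,\|\cdot\|)$ satisfies the multiple coarse $L^2$-estimate condition with constants $C_k=1$, which trivially satisfy $\frac1k\log C_k\to0$. Finally, Theorem~\ref{thm:coarse estimate text} with $p=2>1$ gives that $(F,\|\cdot\|)$ is Griffiths semi-positive.

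The only step requiring genuine computation --- hence the main obstacle --- is the isometry claim in the second paragraph: one must check that under the K\"unneth identification the fiberwise $L^2$-inner product on $H^0((X^{(k)})_t,K_{(X^{(k)})_t}\otimes E^{(k)}|_{(X^{(k)})_t})$ equals the tensor-product inner product of $k$ copies of $(F_t,\|\cdot\|_t)$. This is a Fubini computation, but it takes some care to reorder the wedge factors $pr_i^*s_i$ and to match the normalizing constant $c_{km}$ on the fiber product with the constants $c_m$ on the individual fibers, so that the integral over $X_t\times\cdots\times X_t$ genuinely factors as a product of integrals over $X_t$. Everything else --- K\"ahlerness and properness of $X^{(k)}$, local freeness of the direct image, Nakano semi-positivity of $E^{(k)}$, and the relative-canonical splitting --- is standard or already supplied by the cited results.
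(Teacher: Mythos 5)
Your proposal is correct and follows essentially the same route as the paper: the $k$-fold fiber product, Nakano semi-positivity of $E^{(k)}$ via Lemma \ref{lem: Nakano fiber tensor product}, and the Hermitian identification $\pi^{(k)}_*(K_{X^{(k)}/U}\otimes E^{(k)})\cong F^{\otimes k}$, which is exactly the Fubini/K\"unneth isometry you flag and which the paper quotes from \cite[Lemma 9.2]{DWZZ1}. The only difference is cosmetic: the paper reruns the H\"ormander--Fubini argument of Theorem \ref{thm: direct image-optimal L2 estimate} on the fiber product with weights $e^{-\psi}$, whereas you invoke that theorem as a black box for the new family $\pi^{(k)}:X^{(k)}\to U$, which is legitimate and even yields that each $(F^{\otimes k},\|\cdot\|^{\otimes k})$ satisfies the optimal $L^2$-estimate condition, so the multiple coarse condition holds with $C_k=1$.
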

\begin{proof}
Let $\omega_0$ be the standard K\" ahler metric on $U$ and $\omega$ be an arbitrary K\" ahler metric on $X$.
We have the following constructions:
\begin{itemize}
\item Let $X_k:=X\times_\pi\cdots\times_\pi X$ be the $k$ times fiber product of $X$ with respect to the map $\pi: X\rightarrow U$.

\item The induced map  $X_k\rightarrow U$  by $\pi$ is  denoted by $\pi_k: X_k\rightarrow U$,  and $X_{k,t}:=\pi^{-1}_k(t)=X_t^k$ for every $t\in U$.
\item There are natural holomorphic projections $pr_j$ from $X_k$ to its $j$-th factor $X$.
\item The induced  K\" ahler metric $\omega_k:=pr_1^*\omega+\cdots+pr_k^*\omega$ on $X_k$.
\item Set $E_j:=pr_j^*E$, and $E^k:=E_1\otimes \cdots \otimes E_k$. Then $E^k$ can be equipped with the induced metric $h^k:=pr_1^*h\otimes \cdots\otimes pr_k^*h$.
\end{itemize}

We have the following observations:
\begin{itemize}
\item From Lemma \ref{lem: Nakano fiber tensor product}, $E^k$ equipped with the Hermitian metric $h^k$ is Nakano semi-positive.
\item From \cite[Lemma 9.2]{DWZZ1},  the direct image bundle $F^k:=(\pi_k)_*(K_{X_k/U}\otimes E^k)=(\pi_*(K_{X/U}\otimes E))^{\otimes k}=F^{\otimes k}$, as Hermitian holomorphic vector bundles. (In fact, \cite[Lemma 9.2]{DWZZ1} was proved for line bundles, but it is clear that the proof also works for vector bundles.)
\end{itemize}

Let $f$ be an arbitrarily fixed smooth compactly supported $(m,1)$-form on $U$ with valued in $F^k$, such that $\bar\partial f=0$. Let $\psi$ be an arbitrary smooth strictly plurisubharmonic function on $U$.  To prove that  $F$ satisfies the multiple coarse $L^2$-estimate, we need to show that one can solve the equation $\bar\partial u=f$ on $U$, with the estimate $\int_U |u|^2_{h^k}e^{-\psi}\leq \int_U \langle  B_{\psi}^{-1}f,f  \rangle e^{-\psi}$, where $B_\psi=[i\partial\bar\partial \psi, \Lambda_{\omega_0}]$.

As in the proof of Theorem \ref{thm: direct image-optimal L2 estimate}, we may consider $f$ as a smooth compactly supported $K_{X_k}\otimes E^k$ valued $(0,1)$-form $\tilde f$ on $X_k$. Then it is clear that $\bar\partial \tilde f=0$.
We consider the following integration
$$\int_{X_k}\langle [i\Theta_{E^k,h^k}+i\partial\bar\partial \pi_k^*\psi\otimes Id_{E^k}, \Lambda_{\omega_k}]^{-1}\tilde f,\tilde f\rangle_{\omega_k} e^{-\pi_k^*\psi}dV_{\omega_k}.$$
 By the same  analysis as in the proof of Theorem \ref{thm: direct image-optimal L2 estimate}, we can get that
\begin{align*}
&\int_{X_k}\langle [i\Theta_{E^k,h^k}+i\partial\bar\partial \pi_k^*\psi\otimes Id_{E^k}, \Lambda_{\omega_k}]^{-1}\tilde f,\tilde f\rangle_{\omega_k} e^{-\pi_k^*\psi}dV_{\omega_k}\\
&\leq \int_{X_k}\langle   [i\partial\bar\partial \pi_k^*\psi\otimes Id_{E^k}, \Lambda_{\pi_k^*\omega_0}]^{-1}\tilde f, \tilde f    \rangle_{\omega_k} e^{-\pi_k^*\psi}dV_{\omega_k}\\
&=\int_{X_k}   \sum_{j,k}\psi^{jk}c_{km}f_j\wedge\bar f_k   e^{-\pi_k^*\psi} c_ndt\wedge d\bar t             \\
&=  \int_U\langle B_\psi^{-1}f,f\rangle_te^{-\psi}dV_{\omega_0}.
\end{align*}

Now from Lemma \ref{thm: L2 estimate Nakano}, we can solve the equation $\bar\partial\tilde u=\tilde f$ with the estimate
\begin{align*}
\int_{X_k}|\tilde u|^2_{h^k}e^{-\pi_k^*\psi}dV_{\omega_k}&\leq \int_{X_k}\langle [i\Theta_{E^k,h^k}+i\partial\bar\partial \pi_k^*\psi\otimes Id_{E^k}, \Lambda_{\omega_k}]^{-1}\tilde f,\tilde f\rangle_{\omega_k} e^{-\pi_k^*\psi}dV_{\omega_k}\\
&\leq \int_U\langle B_\psi^{-1}f,f\rangle_te^{-\psi}dV_{\omega_0}.
\end{align*}

Similarly,  $\bar\partial\tilde u|_{X_t}=0$ for any fixed $t\in U$,
since $\bar\partial\tilde u=\tilde f$.
This means that $\tilde u_t:=\tilde u(t,\cdot)\in F^k_t$.
Therefore we may view $\tilde u$ as a section $u$ of $F^k$.
It is obviously that $\bar\partial u=f$.


Applying Fubini's theorem to the L.H.S of above inequality, we get that
\begin{align*}
\int_U|u_t|_t^2e^{-\psi}dV_{\omega_0}\leq  \int_U\langle B_\psi^{-1}f,f\rangle_te^{-\psi}dV_{\omega_0},
\end{align*}
which implies that $(F, \|\cdot\|)$ satisfies the multiple coarse $L^2$-estimate on $U$.

\end{proof}

\subsection{Optimal $L^2$-extension condition and Griffiths positivity}
\begin{thm}\label{thm: direct image-optimal L2 extension}The Hermitian holomorphic vector bundle $(F, \|\cdot\|)$ over $U$ as in Theorem \ref{thm: direct image-optimal L2 estimate} satisfies the optimal $L^2$-extension condition. In particular, $F$ is Griffiths semipositive.
\end{thm}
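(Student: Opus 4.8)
The plan is to check directly that the bundle $(F,\|\cdot\|)$ satisfies the optimal $L^{2}$-extension condition of Definition~\ref{def:Lp extension}(1), and then to invoke Theorem~\ref{thm: optimal Lp estimate : Griffiths positive} with $p=2$. First I would record the dictionary coming from $F=\pi_{*}(K_{X/U}\otimes E)$: over an open set $U'\subseteq U$, a holomorphic section $f$ of $F$ is the same datum as a holomorphic section $\widehat f$ of $K_{X/U}\otimes E$ over $\pi^{-1}(U')$ with $\widehat f|_{X_{w}}=f(w)$; setting $\widetilde f:=\widehat f\wedge\pi^{*}(dw)$ with $dw=dw_{1}\wedge\cdots\wedge dw_{n}$ a generator of $K_{U}$, one has $\widetilde f\in H^{0}(\pi^{-1}(U'),K_{X}\otimes E)$ and, by Fubini's theorem, $\int_{\pi^{-1}(U')}c_{m+n}\,\widetilde f\wedge\overline{\widetilde f}=\int_{U'}\|f(w)\|^{2}_{w}\,dV(w)$. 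Consequently, given $w_{0}\in U$, $a\in F_{w_{0}}$ with $\|a\|_{w_{0}}=1$, and a holomorphic cylinder $P$ with $w_{0}+P\subseteq U$, it suffices to produce $\widetilde f\in H^{0}(\pi^{-1}(w_{0}+P),K_{X}\otimes E)$ with $\widetilde f|_{X_{w_{0}}}=a\wedge dw$ and $\int_{\pi^{-1}(w_{0}+P)}c_{m+n}\,\widetilde f\wedge\overline{\widetilde f}\leq\mu(P)$.

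To construct such a $\widetilde f$ I would apply the Ohsawa--Takegoshi extension theorem with optimal estimate (Lemma~\ref{thm: optimal L2 extension}, coming from \cite{GZh15d,ZhouZhu192}) to the Nakano semi-positive bundle $(E,h)$ on the family $\pi^{-1}(w_{0}+P)\ra w_{0}+P$, extending $a$, viewed as a section of $K_{X/U}\otimes E$ over the fibre $X_{w_{0}}$ of finite $L^{2}$-norm, across the total space. The base here is $w_{0}+P$: after a unitary change of coordinates on $\mc^{n}\supseteq U$ and a translation it becomes a product $\Delta_{r}\times B^{n-1}_{s}$ of a disc and a ball, hence a bounded pseudoconvex domain with $\mu(w_{0}+P)=\mu(P)$, and $\pi$ restricted over it is still a proper holomorphic submersion with $\pi^{-1}(w_{0}+P)$ Kähler (an open subset of $X$). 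The extension theorem then yields $\widetilde f$ as above with $\int_{\pi^{-1}(w_{0}+P)}c_{m+n}\,\widetilde f\wedge\overline{\widetilde f}\leq\mu(w_{0}+P)\int_{X_{w_{0}}}c_{m}\,a\wedge\overline a=\mu(P)$; translating back through the dictionary gives $f\in H^{0}(w_{0}+P,F)$ with $f(w_{0})=a$ and $\tfrac{1}{\mu(P)}\int_{w_{0}+P}\|f\|^{2}\leq 1$. Since $w_{0}$, $a$ and $P$ were arbitrary, $(F,\|\cdot\|)$ satisfies the optimal $L^{2}$-extension condition.

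Finally, as $\pi$ is proper, $F$ is a holomorphic vector bundle of finite rank and $\|\cdot\|$ is a continuous Hermitian metric, so $|s|_{(F,\|\cdot\|)^{*}}$ is continuous---in particular upper semi-continuous---for every local holomorphic section $s$ of $F^{*}$; hence Theorem~\ref{thm: optimal Lp estimate : Griffiths positive} applies and $F$ is Griffiths semi-positive. I expect the one genuinely substantive point to be the \emph{sharpness} of the constant, $\mu(P)$ rather than an arbitrary one, in the extension step: Lemma~\ref{thm: optimal L2 extension} is stated with a unit-ball base, whereas here the base is a cylinder, so one must use the version of the optimal $L^{2}$-extension theorem (available from \cite{ZhouZhu192}) in which the unit ball is replaced by an arbitrary bounded pseudoconvex domain and $\mu(B)$ by its Lebesgue volume; one cannot instead try to iterate Lemma~\ref{thm: optimal L2 extension} over the disc and ball factors, because the intermediate family over the disc fails to be proper. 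Alternatively, having Theorem~\ref{thm: direct image-optimal L2 estimate} already in hand, one could apply the optimal Ohsawa--Takegoshi extension theorem for finite-rank Nakano semi-positive vector bundles directly to $(F,\|\cdot\|)$ over $U$ and obtain the cylinder extension at once. Everything else---the Fubini identity and the $K_{X/U}$-bookkeeping behind it, and the properness and Kählerness of the restricted family---is routine.
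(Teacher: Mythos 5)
Your proposal is correct and follows essentially the same route as the paper: apply the optimal Ohsawa--Takegoshi extension theorem for the Nakano semi-positive bundle $(E,h)$ over the restricted family $\pi^{-1}(t_0+P)\rightarrow t_0+P$, convert the resulting section of $K_X\otimes E$ into a holomorphic section of $F$ over $t_0+P$ via Fubini's theorem, and conclude Griffiths semi-positivity from Theorem \ref{thm: optimal Lp estimate : Griffiths positive}. Your observation that one needs the form of Lemma \ref{thm: optimal L2 extension} with a bounded pseudoconvex (cylinder) base and constant equal to its Lebesgue volume is exactly what the paper's proof uses implicitly when it writes the estimate with $\mu(P)$.
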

\begin{proof}
For any $t_0\in U$, any holomorphic cylinder $P$ such that $t_0+P\subset U$, and any $a_{t_0}\in F_{t_0}$, which  is a holomorphic section of $K_{X_{t_0}}\otimes E|_{X_{t_0}}$ on $X_{t_0}$. Since $E$ is  Nakano semi-positive, from Lemma \ref{thm: L2 estimate Nakano}, we get a homolomorphic extension $ a\in H^0(X,K_X\otimes E)$ such that $ a|_{X_{t_0}}=a_{t_0}\wedge dt$, and with the estimate
\begin{align*}
\int_{\pi^{-1}(t_0+P)}c_{m+n} a\wedge \bar a\leq \mu(P)\int_{X_{t_0}}c_m a_{t_0}\wedge \bar a_{t_0}=\mu(P)|a_{t_0}|_{t_0}^2,
\end{align*}
where $\mu(P)$ is the volume of $P$ with respect to the Lebesgue measure $d\mu$ on $\mathbb C^m$.
Since $ a_t:=(a/dt)|_{X_t}\in H^0(X_t,K_{X_t}\otimes E|_{X_t})$,  $a/dt$ can be seen as a  holomorphic section of the direct image bundle $F $ over  $t_0+P$, and from Fubini's theorem, we can obtain that
\begin{align*}
\int_{t_0+P}|a_t|^2_tdV_{\omega_0}\leq \mu({P})|a_{t_0}|^2_{t_0},
\end{align*}
which is the desired optimal $L^2$-extension.

\end{proof}

\subsection{Multiple coarse $L^2$-extension condition and Griffiths positivity}
In this subsection, we will prove the following
\begin{thm} \label{thm: direct image-coarse L2 extension}
The Hermitian holomorphic vector bundle $(F, \|\cdot\|)$ over $U$ as in Theorem \ref{thm: direct image-optimal L2 estimate} satisfies the multiple coarse $L^2$-extension condition.  In particular, $F$ is Griffiths semipositive.
\end{thm}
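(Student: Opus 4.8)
The plan is to verify that $(F,\|\cdot\|)$ satisfies the multiple coarse $L^2$-extension condition on $U$ and then invoke Theorem \ref{thm: multiple coarse Lp: Griffiths positivity.} (its Hermitian metric $\|\cdot\|$ is continuous, so the hypothesis of that theorem is met). I would reuse the fiber-product machinery from the proof of Theorem \ref{thm: direct image-coarse L2 estimate}: for $k\ge1$ let $X_k$ be the $k$-fold fiber product of $X$ over $\pi$, with induced proper K\"ahler submersion $\pi_k\colon X_k\to U$ and natural projections $pr_j\colon X_k\to X$, and set $E^k:=pr_1^*E\otimes\cdots\otimes pr_k^*E$ with $h^k:=pr_1^*h\otimes\cdots\otimes pr_k^*h$. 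By Lemma \ref{lem: Nakano fiber tensor product}, $(E^k,h^k)$ is Nakano semi-positive, and by \cite[Lemma 9.2]{DWZZ1} one has $(\pi_k)_*(K_{X_k/U}\otimes E^k)=F^{\otimes k}$ as Hermitian holomorphic vector bundles, with fiber $H^0(X_{k,t},K_{X_{k,t}}\otimes E^k|_{X_{k,t}})$ over $t$ and $X_{k,t}=X_t^k$.

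First I would fix $t_0\in U$ and $a\in F_{t_0}$ with $\|a\|_{t_0}=1$, viewed as a holomorphic section of $K_{X_{t_0}}\otimes E|_{X_{t_0}}$. Then $a^{\otimes k}$ is a holomorphic section of $K_{X_{k,t_0}}\otimes E^k|_{X_{k,t_0}}$ on the central fiber $X_{k,t_0}=X_{t_0}^k$, and Fubini's theorem on the product gives $\|a^{\otimes k}\|_{t_0}^2=\|a\|_{t_0}^{2k}=1$. Since $(E^k,h^k)$ is Nakano semi-positive and $U\subset\mc^n$ is bounded pseudoconvex, the Ohsawa--Takegoshi extension theorem \cite{OT1} applied to $\pi_k\colon X_k\to U$ at the point $t_0$ produces $\widetilde a_k\in H^0(X_k,K_{X_k}\otimes E^k)$ with $\widetilde a_k|_{X_{k,t_0}}=a^{\otimes k}\wedge dt$ and
$$\int_{X_k}c_{n+km}\,\widetilde a_k\wedge\overline{\widetilde a_k}\ \le\ C_U\,\|a^{\otimes k}\|_{t_0}^2\ =\ C_U,$$
where $m=\dim X_t$, and $C_U$ depends only on $n$ and (the diameter of) $U$ — in particular $C_U$ is independent of $k$, of $E^k$ and $X_k$, and of the base point $t_0$. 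As in the proof of Theorem \ref{thm: direct image-optimal L2 extension}, the relative form $\widetilde a_k/dt\in H^0(X_k,K_{X_k/U}\otimes E^k)$ restricts fiberwise to a holomorphic section $f_k$ of $F^{\otimes k}$ over $U$ with $f_k(t_0)=a^{\otimes k}$, and Fubini over the fibers of $\pi_k$ gives $\int_U\|f_k\|_t^2\,dV_{\omega_0}\le C_U$. Taking the constants in Definition \ref{def:Lp extension}(2) to be $C_k:=C_U$, we have $\frac1k\log C_k=\frac{\log C_U}{k}\to0$, so $(F,\|\cdot\|)$ satisfies the multiple coarse $L^2$-extension condition and Theorem \ref{thm: multiple coarse Lp: Griffiths positivity.} yields Griffiths semi-positivity of $F$.

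The main obstacle, and the only point that really needs care, is the uniformity in $k$ of the Ohsawa--Takegoshi constant: it is essential that extending $a^{\otimes k}$ ``horizontally'' over the base $U$ costs only a constant controlled by the size of $U\subset\mc^n$, not by the growing fibers $X_t^k$ or the bundles $E^k$. This is precisely the mechanism behind the tensor-power trick of \cite{DWZZ1}, and it is what forces the use of the family version of Ohsawa--Takegoshi rather than an optimal-constant statement on a cylinder. As an alternative route one could avoid the fiber products entirely: Theorem \ref{thm: direct image-coarse L2 estimate} already shows $(F,\|\cdot\|)$ satisfies the multiple coarse $L^2$-estimate condition, and the argument in the proof of Theorem \ref{thm:coarse estimate text} converts the multiple coarse $L^2$-estimate condition into the multiple coarse $L^2$-extension condition.
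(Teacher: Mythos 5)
Your proposal is correct and follows essentially the same route as the paper: fiber products $X_k$ with $(E^k,h^k)$ Nakano semi-positive, identification $(\pi_k)_*(K_{X_k/U}\otimes E^k)=F^{\otimes k}$, a fiberwise extension of $a^{\otimes k}$ over $U$ with a constant depending only on $U$ (hence uniform in $k$), and Fubini. The only small correction is the citation: the extension step should invoke the family version with vector bundle coefficients over a K\"ahler total space (Lemma \ref{thm: optimal L2 extension}, due to Zhou--Zhu) rather than \cite{OT1} itself, exactly as you anticipate in your closing remark about needing the family form of the theorem.
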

\begin{proof} Let $(X_k, \pi_k, \omega_k, F^k)$ be as  in  the proof of Theorem \ref{thm: direct image-coarse L2 estimate}.
%
%
For any $t_0\in U$,  $a_{t_0}\in F_{t_0}$,  $a_{t_0}^{\otimes k}$ is a holomorphic section  of $K_{X_{k,t_0}}\otimes E^k$. Since $E^k$ with the induced metric $h^k$ is semi-positive in the sense of Nakano on $X_k$, by Lemma \ref{thm: optimal L2 extension}, there exists $ a\in H^0(X_k,K_{X_k}\otimes E^k)$, such that $ a|_{X_{k,t_0}}=a_{t_0}^{\otimes k}\wedge dt$ and satisfies  the following estimate
\begin{align*}
\int_{X_k}| a|^2_{h^k}dV_{\omega_k}\leq C |a_{t_0}^{\otimes k}|^2_{t_0},
\end{align*}
where $C$ is a universal constant which only depends on the diameter and dimension of  $U$.
We can view $ a_t:= (a/dt)|_{X_t}, t\in U$  as a holomorphic section of $F^k$. From Fubini's theorem,
we have that
\begin{align*}
\int_{X_k}| a_t|^2_{h^k}dV_{\omega_k}=\int_{U}| a_t|_t^2dV_{\omega_0}.
\end{align*}
In conclusion, we get a holomorphic extension $ a/dt$ of $a_{t_0}^{\otimes k}$ , with the estimate
\begin{align*}
\int_{U}| a_t|_t^2dV_{\omega_0}\leq C|a_{t_0}^{\otimes k}|^2_{t_0}.
\end{align*}
This completes the proof of Theorem \ref{thm: direct image-coarse L2 extension}.
\end{proof}

\begin{rem}\label{rem: Griffiths vector bundle map}  Let $\pi:X\rightarrow Y$ be a proper holomorphic map between K\"ahler manifolds which may be not regular. Let $(E,h)$ be a Hermitian holomorphic vector bundle on $X$ whose Chern curvature is Nakano semi-positive. Then the direct image sheaf $\mathcal F:=\pi_*(K_{X/Y}\otimes E)$ can be equipped with a natural singular metric which is positively curved in the sense of Definition \ref{def:finsler on sheaf}. In fact, let $Z\subset Y$ be the singular locus of $\pi$, then on $X\setminus \pi^{-1}(Z)$, $\pi$ is a submersion, and $\mathcal F$ is locally free
and can be viewed as a vector bundle $F$ on $Y':=Y\backslash Z$, with $F_t=H^0(K_{X_t}\otimes E|_{X_t})$. The induced Hermitian metric $\|\cdot\|$ on $F$ is as follows: for any holomorphic section $u\in H^0(Y', F)$,
	$$\|u\|_t^2:=\int_{X_t}c_{m}u\wedge\bar u.$$
	From one of Theorem \ref{thm: direct image-coarse L2 estimate},  and Theorem \ref{thm: direct image-optimal L2 extension},
Theorem \ref{thm: direct image-coarse L2 extension}, we see that $\|\cdot\|_t$ is a Hermitian metric on $F$ with Griffiths semi-positive curvature.
Moreover, by similar argument as in \cite[Propositionn 23.3]{HPS16} (see also \cite[Step 3 in the proof of Theorem 9.3]{DWZZ1}),
one can show that the metric on $F$ extends to a positively curved metric on  $\mathcal F$.
In the special case that $E$ is a line bundle, the same conclusion is true if $h$ is singular and pseudoeffective (see \cite{BP08, PT18, HPS16, DWZZ1,ZhouZhu}).
\end{rem}

\section*{Appendix}
We prove a result used in the proof of Theorem \ref{thm: direct image-optimal L2 estimate-intr},
which seems to be already known.

\begin{lem}\label{lem:inverse cur. indep of metric}
Let $U\subset\mathbb{C}^n$ be a domain, $\omega_1$, $\omega_2$ be any two Hermitian forms on $U$,
and $E=U\times \mathbb{C}^r$ be trivial vector bundle on $U$ with a Hermitian metric.
Let $\Theta\in C^0(X,\Lambda^{1,1}T^*_X\otimes End(E))$ such that $\Theta^*=-\Theta$.
Then
$$Im[i\Theta,\Lambda_{\omega_1}]=Im [i\Theta,\Lambda_{\omega_2}],$$
and for any $E$-valued $(n,1)$ form $u\in Im[i\Theta,\Lambda_{\omega_1}]$,
$$\langle[i\Theta,\Lambda_{\omega_1}]^{-1}u,u\rangle_{\omega_1}dV_{\omega_1}
=\langle[i\Theta,\Lambda_{\omega_2}]^{-1}u,u\rangle_{\omega_2}dV_{\omega_2}.$$
\end{lem}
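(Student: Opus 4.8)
Here is how I would approach the proof.

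The plan is to argue pointwise, since each side of the claimed identity is defined invariantly (so it may be computed in any holomorphic coordinates near a point), and the image subspaces both live in the single space of $E$-valued $(n,1)$-forms at that point. So I would fix $p\in U$. Because $\omega_1|_p$ is a positive definite Hermitian form on $T_pU$, after a complex-linear change of coordinates followed by a unitary one (which leaves $\omega_1|_p$ unchanged) I may assume that $\omega_1|_p$ and $\omega_2|_p$ are \emph{simultaneously diagonal} in coordinates $(z_1,\dots,z_n)$ near $p$. Thus it suffices to treat an arbitrary diagonal Hermitian form $\omega=i\sum_{j}\gamma_j\,dz_j\wedge d\bar z_j$ with $\gamma_j>0$ at $p$ and to check that the resulting expressions are independent of $(\gamma_1,\dots,\gamma_n)$. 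I would also fix an $h$-orthonormal frame $(e_1,\dots,e_r)$ of $E$ at $p$ and write $i\Theta=i\sum_{j,k,\lambda,\mu}c_{jk\lambda\mu}\,dz_j\wedge d\bar z_k\otimes e^*_\lambda\otimes e_\mu$ with $\overline{c_{jk\lambda\mu}}=c_{kj\mu\lambda}$, which is what $\Theta^*=-\Theta$ says in these terms.

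The key observation is that on $E$-valued $(n,1)$-forms the operator $[i\Theta,\Lambda_\omega]$ is given by a single exterior product: for $u=\sum_{j,\lambda}u_{j\lambda}\,dz\wedge d\bar z_j\otimes e_\lambda$ with $dz=dz_1\wedge\cdots\wedge dz_n$, the form $i\Theta\wedge u$ has bidegree $(n+1,2)$ and hence vanishes, so $[i\Theta,\Lambda_\omega]u=i\Theta\wedge(\Lambda_\omega u)$. Computing $\Lambda_\omega(dz\wedge d\bar z_j)=\varepsilon_j\,\gamma_j^{-1}\,dz_1\wedge\cdots\widehat{dz_j}\cdots\wedge dz_n$ with $\varepsilon_j$ a constant depending only on $n$ and $j$, wedging back with $i\Theta$, and fixing the $\varepsilon_j$ by comparison with the Euclidean identity \eqref{eqn: computation of B 2}, I would obtain
\[
[i\Theta,\Lambda_\omega]\,u=\sum_{l,\mu}\Big(\sum_{j,\lambda}\gamma_j^{-1}\,c_{jl\lambda\mu}\,u_{j\lambda}\Big)\,dz\wedge d\bar z_l\otimes e_\mu .
\]
At the same time, in the frame $\{dz\wedge d\bar z_j\otimes e_\lambda\}$ the fibrewise inner product is diagonal, $\langle dz\wedge d\bar z_j\otimes e_\lambda,\,dz\wedge d\bar z_k\otimes e_\mu\rangle_\omega=\delta_{jk}\delta_{\lambda\mu}\,\gamma_j^{-1}\prod_l\gamma_l^{-1}$, and $dV_\omega=\big(\prod_l\gamma_l\big)\,dV_{\mathrm{euc}}$.

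From here everything is linear algebra. Let $C$ be the Hermitian matrix $\big(c_{jl\lambda\mu}\big)$ with rows indexed by $(l,\mu)$ and columns by $(j,\lambda)$; it does not involve $\omega$. The displayed formula says that $[i\Theta,\Lambda_\omega]$ acts, in the coordinates $(u_{j\lambda})$, as $C$ precomposed with the invertible diagonal substitution $(u_{j\lambda})\mapsto(\gamma_j^{-1}u_{j\lambda})$; in particular its image is $\mathrm{Im}\,C$. Since the coordinates $(u_{j\lambda})$ do not depend on $\omega$, this already gives $\mathrm{Im}[i\Theta,\Lambda_{\omega_1}]=\mathrm{Im}[i\Theta,\Lambda_{\omega_2}]$. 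For $u$ in this common image, solving $[i\Theta,\Lambda_\omega]u'=u$ gives $u'_{j\lambda}=\gamma_j\,(C^{-1}\vec u)_{j\lambda}$, with $C^{-1}$ denoting the inverse on $\mathrm{Im}\,C$; using the diagonal inner product and the volume factor above,
\[
\langle[i\Theta,\Lambda_\omega]^{-1}u,u\rangle_\omega\,dV_\omega=\Big(\prod_l\gamma_l^{-1}\Big)\langle C^{-1}\vec u,\vec u\rangle_{\mathrm{euc}}\cdot\Big(\prod_l\gamma_l\Big)dV_{\mathrm{euc}}=\langle C^{-1}\vec u,\vec u\rangle_{\mathrm{euc}}\,dV_{\mathrm{euc}},
\]
which no longer depends on $\gamma$; specializing to $\omega=\omega_1$ and $\omega=\omega_2$ gives the stated equality. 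If $\vec u\notin\mathrm{Im}\,C$, both sides are $+\infty$ by the convention recalled before the lemma, so equality persists. The one step that needs care is the middle one: the metric enters in three separate places — in $\Lambda_\omega$, in the fibrewise norm on $\Lambda^{n,1}T^*U\otimes E$, and in $dV_\omega$ — and the content of the lemma is precisely that the factors $\gamma_j^{-1}$ and $\prod_l\gamma_l^{\pm1}$ cancel and leave the metric-free matrix $C$. The ambiguous constants $\varepsilon_j$ (signs and powers of $i$ coming from $\Lambda_\omega$) are harmless: they are metric-independent and are pinned down once and for all by the already recorded identity \eqref{eqn: computation of B 2}, which is all the argument uses.
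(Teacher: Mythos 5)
Your proposal is correct and follows essentially the same route as the paper: fix a point, simultaneously diagonalize the two Hermitian forms there, use the Euclidean formula for $[i\Theta,\Lambda_\omega]$ on $(n,1)$-forms to make the operator, the fibre metric, and $dV_\omega$ explicit in the diagonal factors, and observe that these factors cancel in $\langle[i\Theta,\Lambda_\omega]^{-1}u,u\rangle_\omega\,dV_\omega$. The only difference is packaging — the paper realizes the diagonal-metric computation through the substitution $w_j=\lambda_j z_j$ and compares coefficients in the two coordinate systems, while you write the operator as the fixed Hermitian matrix $C$ precomposed with the diagonal rescaling — which is the same cancellation.
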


\begin{proof}
For any $z_0\in U$, after a linearly transformation, we may assume
$\omega_1=i\sum_{j=1}^{n}dz_j\wedge d\bar{z}_j$ and $\omega_2=i\sum_{j=1}^{n}\lambda_j^2 dz_j\wedge d\bar{z}_j$
at $z_0$ with $\lambda_j>0$.
Let $w_j=\lambda_j z_j$ for $j=1,2,\cdots,n$,
 then $\omega_2=i\sum_{j=1}^{n}dw_j\wedge d\bar{w}_j$.
We may write
\begin{equation}\label{eq p1}
i\Theta=i\sum_{jk\alpha\beta}c_{jk\alpha\beta}dz_j\wedge d\bar{z}_k\otimes e^*_\alpha\otimes e_\beta
=i\sum_{jk\alpha\beta}c'_{jk\alpha\beta}dw_j\wedge d\bar{w}_k\otimes e^*_\alpha\otimes e_\beta
\end{equation}
with $c'_{jk\alpha\beta}=\frac{c_{jk\alpha\beta}}{\lambda_j\lambda_k}$.

Denote $\lambda=\prod_{j=1}^{n}\lambda_j$.
Let $u=\sum_{j,\alpha}u_{j\alpha}dz\wedge d\bar{z}_j\otimes e_\alpha$, then
$u=\sum_{j,\alpha}u'_{j\alpha}dw\wedge d\bar{w}_j\otimes e_\alpha$ with $u'_{j\alpha}=\frac{u_{j\alpha}}{\lambda\lambda_j}$.
Note that
\begin{equation}\label{eq p2}
[i\Theta,\Lambda_{\omega_1}]u=\sum_{jk\alpha\beta}u_{j\alpha}c_{jk\alpha\beta}dz\wedge d\bar{z}_k\otimes e_{\beta},
\end{equation}
and
\begin{equation}\label{eq p3}
[i\Theta,\Lambda_{\omega_2}]u=\sum_{jk\alpha\beta}u'_{j\alpha}c'_{jk\alpha\beta}dw\wedge d\bar{w}_k\otimes e_{\beta}.
\end{equation}
So it is easy to see $Im[i\Theta,\Lambda_{\omega_1}]=Im [i\Theta,\Lambda_{\omega_2}]$.
We write
$$[i\Theta,\Lambda_{\omega_1}]^{-1}u=\sum_{jk\alpha\beta}u_{j\alpha}d_{jk\alpha\beta}dz\wedge d\bar{z}_k\otimes e_{\beta},$$
$$[i\Theta,\Lambda_{\omega_2}]^{-1}u=\sum_{jk\alpha\beta}u'_{j\alpha}d'_{jk\alpha\beta}dw\wedge d\bar{w}_k\otimes e_{\beta},$$
Then from equations \eqref{eq p1},\eqref{eq p2},\eqref{eq p3}, we can get
$$d'_{jk\alpha\beta}=\lambda_j\lambda_k d_{jk\alpha\beta}.$$
We now assume that $\{e_\alpha\}$ are orthonormal at $z_0$.
Then
$$\langle[i\Theta,\Lambda_{\omega_1}]^{-1}u,u\rangle_{\omega_1}dV_{\omega_1}=
\sum_{jk\alpha\beta}d_{jk\alpha\beta}u_{j\alpha}\bar{u}_{k\beta}c_ndz\wedge d\bar{z},$$
$$\langle[i\Theta,\Lambda_{\omega_2}]^{-1}u,u\rangle_{\omega_2}dV_{\omega_2}=
\sum_{jk\alpha\beta}d'_{jk\alpha\beta}u'_{j\alpha}\bar{u'}_{k\beta}c_ndw\wedge d\bar{w}.$$
Note also that
 $$c_ndw\wedge d\bar{w}=\lambda^2c_ndz\wedge d\bar{z},$$
We get
$$\langle[i\Theta,\Lambda_{\omega_1}]^{-1}u,u\rangle_{\omega_1}dV_{\omega_1}=
\langle[i\Theta,\Lambda_{\omega_2}]^{-1}u,u\rangle_{\omega_2}dV_{\omega_2}.$$
\end{proof}

\end{document}